\definecolor{light-gray}{gray}{0.9}
\definecolor{med-gray}{gray}{0.5}
\definecolor{gray1}{gray}{0.87}
\definecolor{gray2}{gray}{0.74}
\definecolor{gray3}{gray}{0.64}
\definecolor{gray4}{gray}{0.55}
\definecolor{verylight-yellow}{rgb}{1,1,0.7}
\definecolor{yellow}{rgb}{1,1,0.2}
\definecolor{vivid-blue}{rgb}{0.2,0,1}
\definecolor{light-pink}{rgb}{1,0.8,1}
\definecolor{med-pink}{rgb}{1,0.6,1}
\definecolor{aqua}{rgb}{0.0, 1.0, 1.0}
\definecolor{light-gray}{rgb}{0.5, 0.9, 0.5}
\theoremstyle{plain}
\newtheorem{theorem}{Theorem}[section]
\newtheorem{proposition}[theorem]{Proposition}
\newtheorem{corollary}[theorem]{Corollary}
\newtheorem{lemma}[theorem]{Lemma}
\theoremstyle{definition}
\newtheorem{definition}[theorem]{Definition}
\newtheorem{conjecture}[theorem]{Conjecture}
\newtheorem{remark}[theorem]{Remark}
\newtheorem{example}[theorem]{Example}
\newtheorem{cor}[theorem]{Corollary}
\newtheorem{conj}[theorem]{Conjecture}
\newtheorem*{ack}{Acknowledgment}
\numberwithin{equation}{section}
\numberwithin{table}{section}
\definecolor{purple}{rgb}{0.4,0.2,0.4}
\def\cha{\mathrm{char}\ }
\def\Mat{\mathrm{Mat}}
\def\<{\left<}
\def\>{\right>}
\def\max{\mathrm{max}}
\title{Nilpotent matrices having a given Jordan type as maximum commuting nilpotent orbit\footnote{{\bf Keywords}: Jordan type, commuting nilpotent matrices, generic commuting orbit, nilpotent orbit, partition. {\bf 2010 Mathematics Subject Classification}: Primary: 15A27;  Secondary:  05E40, 13E10, 15A21.
}}
\author{Anthony Iarrobino, Leila Khatami, Bart Van Steirteghem, Rui Zhao\\[.05in]
}
\date{July 28, 2016, revised July 10, 2017 and February 3, 2018}
\begin{document}
\maketitle
\begin{abstract}
The Jordan type of a nilpotent matrix is the partition giving the sizes of its Jordan blocks. We study pairs of partitions $(P,Q)$, where $Q=\mathfrak Q(P)$ is the Jordan type of a generic nilpotent matrix $A$ commuting with a nilpotent matrix $B$ of Jordan type $P$.   T.~Ko\v{s}ir and P. Oblak have shown that $Q$ has parts that differ pairwise by at least two. Such partitions, which are also known as ``super distinct'' or ``Rogers-Ramanujan'',
are exactly those that are stable or ``self-large'' in the sense that $\mathfrak Q(Q)=Q$. \par
In 2012 P. Oblak formulated a conjecture concerning the cardinality of $\mathfrak Q^{-1}(Q)$ when $Q$ has two parts, and proved some special cases. R. Zhao
refined this to posit that the partitions in $\mathfrak Q^{-1}(Q)$ for $ Q=(u,u-r)$ with $ u>r>1 $ could be arranged in  an $(r-1)\times (u-r)$ table $\mathcal T(Q)$ where the entry in the
$k$-th row and $\ell$-th column has $k+\ell$ parts.
 We prove this Table Theorem, and then generalize the statement to propose a Box Conjecture  for  the set of partitions $\mathfrak Q^{-1}(Q)$ for
  an arbitrary partition $Q$ whose parts differ pairwise by at least two.
\end{abstract}
\tableofcontents
\section{Introduction.}
We fix an infinite field $\sf k$ and denote by $\Mat_n(\sf k)$ the ring of $n\times n$ matrices with entries in $\sf k$ acting on the vector space $V={\sf k}^n$. Let $P$ be a partition of $n$ and denote by $B=J_P$ the nilpotent Jordan block matrix of partition $P$. 
Let $\mathcal C_B=\{A\in \Mat_n({\sf k})\mid  AB=BA\}$ be the centralizer of $B$ in $\Mat_n(\sf k)$, and let $\mathcal N_B$ be the subvariety of nilpotent elements in $\mathcal C_B$. \par
There has been substantial work in the last ten years studying the map $\mathfrak Q$ that takes $P$ to the Jordan type $\mathfrak Q(P)$ of a generic element of $\mathcal N_B$. P.~Oblak conjectured a beautiful recursive description of $\mathfrak Q(P)$. This conjecture remains open in general (for progress on it see Section~\ref{recursiveconjsec},
 Conjecture~\ref{Oblakrecursconj}, Remark \ref{historyrem}, and \cite{Bas2,BKO,IK,Kh1,Kh2,Obl1}).\par
 An {\emph{almost rectangular} partition is one whose largest part is at most one larger than its smallest part. R. Basili introduced the invariant $r_P$,\label{rppage} which is the smallest number of almost rectangular partitions whose union is $P$, and showed that $\mathfrak Q(P)$ has $r_P$ parts (Theorem \ref{rpthm}).  T. Ko\v{s}ir and P. Oblak showed that if the characteristic of ${\sf k}$ is $0$ then $\mathfrak Q(P)$ has parts that differ pairwise by at least two (Theorem~\ref{stablethm}). Even in cases where the Oblak recursive conjecture had been shown  some time ago, (as $r_P= 2$ \cite{KO}, or $r_P=3$ \cite{Kh2}) the set $\mathfrak Q^{-1}(Q)$ remained  mysterious.  In 2012 P.~Oblak made a second conjecture: when $Q=(u,u-r)$ with $u> r\ge 2 $, then the cardinality  $|\mathfrak Q^{-1}(Q)|=(r-1)(u-r)$ \cite[Remark 2]{Obl2}. In 2013, R.~Zhao noticed an even stronger pattern in $\mathfrak Q^{-1}(Q)$ for such $Q$. She conjectured that there is a table $\mathcal T(Q)$ of partitions $P_{k,\ell}$ where the number of parts in $P_{k,\ell}$ is $k+\ell$: see Theorem \ref{Zthm} immediately below.  We here prove a precise version, the Table Theorem (Theorems \ref{tablethm} and \ref{table2thm}).  We then propose a Box Conjecture~\ref{Zgenconj} describing $\mathfrak Q^{-1}(Q)$ for arbitrary partitions $Q$ whose parts differ pairwise by at least two (Section \ref{boxconjsubsec}),
and we study some special cases where $Q$ has three parts (Section~\ref{boxspecialsec}).
\vskip 0.2cm
 
 The question, which pairs of conjugacy classes can occur for pairs of commuting matrices reduces to the case where both matrices are nilpotent. There is an extensive literature on commuting pairs of nilpotent matrices,  including \cite{Bas2,GurSe,IK,Kh1,Kh2,KO,Obl1,Obl2,Pan,Prem} and  others, some of whose results we specifically cite.  Connections to the Hilbert scheme are made in 
 \cite{Bar,Bas,BI,BuEv,HaHy,Nak,Prem}, and  commuting nilpotent orbits occur in the study of Artinian algebras \cite{BI,HW}.  However, the study of the map $P\to \mathfrak Q(P)$  seems to be, surprisingly, very recent, beginning with \cite{Bar,Bas,BI,Kh1,KO,Obl1,Pan, Prem}: apparently, early workers in the area were more drawn to determining vector spaces of commuting matrices of maximum dimension (see \cite{J,Ma,SuTy} and references in the latter). There is further recent work on commuting $r$-tuples of nilpotent matrices, as \cite{GurSe,NgSi,Si} and these also appear to be connected to the study of group schemes \cite{FPS,Ng,SusFB1,SusFB2}. There is much study of nilpotent orbits for Lie algebras, as in \cite{BrBr,CM,Gi,Pan2}; for generalizations of problems considered here to other Lie algebras than $sl_n$, see \cite{Pan}.\vskip 0.2cm
 
Our main result is 
\begin{theorem}\label{Zthm} Let $Q=(u,u-r)$ where $u>r\ge 2$.\begin{enumerate}[i.]
\item The cardinality $| \mathfrak Q^{-1}(Q)|=(r-1)(u-r)$.
\item The set $\mathfrak Q^{-1}(Q)$ may be arranged as an $(r-1)\times (u-r)$ array $\mathcal T(Q)$ of partitions 
\begin{equation}
P_{k,\ell}=P_{k,\ell}(Q), \text { where }1\le k \le r-1, \text { and } 1\le \ell\le u-r,
\end{equation}
such that the number of parts of $P_{k,\ell}$ is $k+\ell$.
\end{enumerate}
\end{theorem}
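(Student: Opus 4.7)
The plan is to reduce the problem to a combinatorial analysis of partitions $P$ with $r_P=2$ and then to invert the known formula for $\mathfrak{Q}(P)$ in that range. By Basili's theorem (Theorem \ref{rpthm}), $\mathfrak{Q}(P)$ has exactly $r_P$ parts, so since $Q=(u,u-r)$ has two parts every $P\in\mathfrak{Q}^{-1}(Q)$ must satisfy $r_P=2$, i.e.\ must be a disjoint union $P=U\sqcup W$ of exactly two almost rectangular sub-partitions with $|U|+|W|=2u-r$. Each almost rectangular partition is specified by three integer data---its number of parts, its smallest part, and the number of parts one larger than the smallest---so after fixing a canonical ordering of $U$ and $W$ (say by smallest part) there is a finite, explicitly enumerable family of candidate partitions to examine.

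Next I would apply the formula for $\mathfrak{Q}(U\sqcup W)$ in the $r_P=2$ case of the recursive Conjecture \ref{Oblakrecursconj}, which is proved in \cite{KO}, and equate the resulting two-part expression with $(u,u-r)$. The resulting Diophantine system in the shape parameters of $U$ and $W$ should cut out exactly a two-parameter family of solutions; I would read off these parameters as indices $k\in\{1,\dots,r-1\}$ and $\ell\in\{1,\dots,u-r\}$, with the ranges dictated by positivity of the block sizes and by the requirement that $r_P=2$ (rather than $r_P=1$, which would occur when $P$ is itself almost rectangular). This simultaneously yields (i), the cardinality $(r-1)(u-r)$, and defines the partitions $P_{k,\ell}$ in (ii). The identity that $P_{k,\ell}$ has $k+\ell$ parts should then follow by a direct computation from the explicit shape of the canonical decomposition $U_{k,\ell}\sqcup W_{k,\ell}$, where the part-counts of the two almost rectangular blocks ought to split naturally as linear functions of $k$ and $\ell$.

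The main obstacle is the uniqueness of the decomposition $P=U\sqcup W$ and a clean inversion of the Košir-Oblak formula. A partition with $r_P=2$ can in general be written as a union of two almost rectangular sub-partitions in more than one way, so a canonical decomposition must be specified and the bijection between $\mathfrak{Q}^{-1}(Q)$ and the index set $\{(k,\ell):1\leq k\leq r-1,\,1\leq\ell\leq u-r\}$ must be verified explicitly. A secondary difficulty is that the general formula for $\mathfrak{Q}$ may require separate bookkeeping at the boundary of the parameter range, when an almost rectangular block degenerates to a true rectangle, a single part, or when the smallest parts of $U$ and $W$ approach each other. Once these issues are pinned down, the cardinality count and the part-count identity should both drop out as immediate consequences of the parametrization.
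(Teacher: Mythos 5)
Your proposal starts from the same reduction the paper uses (Basili's Theorem~\ref{rpthm} forces $r_P=2$, so $P$ is a union of two almost-rectangular subpartitions, and Theorem~\ref{indexthm} / the $r_P=2$ case of the Oblak formula gives $\mathfrak Q(P)$), but it stops at exactly the point where the real difficulty begins, and the way you wave it away is misleading. You treat the non-uniqueness of the decomposition $P=U\sqcup W$ and the boundary degenerations as secondary technical hygiene; in fact they are the entire content of the theorem. Writing $P=\left(a^{n_a},(a-1)^{n_{a-1}},b^{n_b},(b-1)^{n_{b-1}}\right)$ with $a-b\geq 2$, there are \emph{three} competing U-chains $C_a$, $C_{a-1}$ (when $b=a-2$), and $C_b$, and the value of $\mathfrak Q(P)$ depends on which of them attains the maximum. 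The paper encodes this as the type A/B/C trichotomy (Definition~\ref{casesdef}), and type C --- where the \emph{middle} chain $C_{a-1}$ wins while neither $C_a$ nor $C_b$ reaches $u$ --- is not a degenerate boundary of the other two; it is a genuine third regime that only shows up when $u$ is small relative to $r$ (Corollary~\ref{CCor}), and handling it occupies a substantial part of both halves of the proof (the construction in Theorem~\ref{tablethm}(b) and the completeness argument in Lemma~\ref{Ccompletelem}).

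The second gap is the assertion that the Diophantine system ``should cut out exactly a two-parameter family'' with $k$ and $\ell$ readable off the block shapes as linear functions. This is not how the count emerges. The system has four free multiplicities $(n_a,n_{a-1},n_b,n_{b-1})$ plus the choice of $a$ (and $b$), subject to inequalities, and the map to $(k,\ell)=$ (row, column) is governed by the rather intricate table invariants $q_t$, $d_t$, $k_t$, $c_t$ of Definition~\ref{tablenotationdef}: the rows $k_t$ where B/C hooks start are defined by nested ceilings, the A rows are the complementary rows, and within each A row or B/C hook the column index $\ell$ is a shifted count of parts. The fact that these pieces tile the $(r-1)\times(u-r)$ rectangle \emph{exactly once} (Theorem~\ref{tablethm}(d)) is itself a nontrivial verification, and the part-count identity $\#\mathrm{parts}(P_{k,\ell})=k+\ell$ is baked into that tiling, not an afterthought. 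So to turn your outline into a proof you would need to (i) introduce the A/B/C classification by dominant U-chain, (ii) produce explicit formulas for $P_{k,\ell}$ in each type and verify $\mathfrak Q(P_{k,\ell})=(u,u-r)$ via the U-chain lengths (which requires treating several subcases depending on whether $\lceil u/(k-t)\rceil$ and $\lceil (u-r)/(t+\ell)\rceil$ are adjacent), and (iii) prove the converse --- that every type A, B, or C partition mapping to $Q$ occurs --- which is a separate inversion argument for each type, with type C the hardest. None of these steps is routine, and the proposal gives no indication of how to organize them.
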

\begin{remark}
We call this the {\sc Table Theorem}.  Theorem \ref{tablethm} below  specifies each partition $P_{k,\ell}$ in the array or table $\mathcal T(Q)$, and shows that $\mathfrak Q(P_{k,\ell})=Q$; the Completeness Theorem \ref{table2thm} says that $\mathcal T(Q)$ is all of $\mathfrak Q^{-1}(Q)$. Our main tool is P. Oblak's result giving the largest part of $\mathfrak Q(P)$, see \cite{Obl1} and Theorem \ref{indexthm} below.  Some special cases had been shown prior to our work here:  P.~Oblak had shown Theorem \ref{Zthm} for  $2\le r\le 4$ in \cite{Obl2}. R. Zhao in \cite{Z} had shown the case $(u-r)\in\{1,2,3\}$ and also the case $ u\gg r$.

In the formula for cardinality the proposed value  for $|{\mathfrak Q}^{-1}((u,u-r))|$
is the same as that for $|\mathfrak Q^{-1}((u,r-1))|$. Understanding this symmetry was a goal of R.~Zhao in her study  of the two sets: it remains obscure to us. 
\end{remark}
We illustrate Theorem \ref{Zthm} in Example \ref{hook1ex} below (see also Example \ref{27,3ex}). We first introduce almost rectangular partitions\footnote{This name, alluding to the Ferrers diagram (see Figure \ref{5slmostrectfig}), was suggested by T. Ko\v{s}ir and P. Oblak.}, whose importance for the problem of describing the map $P\to \mathfrak Q(P)$ was first noted by R.~Basili \cite{Bas}.
\begin{definition}[Almost Rectangular]\label{ARdef} A partition $P=(p_1,p_2,\ldots ,p_s)$ of $n$ with $ p_1\ge p_2\ge \cdots \ge p_s>0$ is \emph{almost rectangular} if  $p_1-p_s\le 1$. For $1\le k\le n$ we denote by  $[n]^k$ the unique almost rectangular partition of $n$ that has $k$ parts (see equation \eqref{nkeq}).
\end{definition}
\begin{table}[ht]
\large
$$\begin{array}{|l|l|l|}
\hline 

(13,3)&(13,[3]^2)&(13,[3]^3)\\
\hline
([13]^2,3)&([13]^2,[3]^2)&([13]^2,[3]^3)\\
\hline 
([13]^3,3)&([13]^3,[3]^2)&([13]^3,[3]^3)\\
\hline
{\cellcolor{light-gray}(5, [11]^{4})}&([13]^4, [3]^{2})&([13]^4, [3]^{3})\\
\hline
{\cellcolor{light-gray}(5, [11]^{5})}&{\cellcolor{med-pink}\color{blue}{([7]^2, [9]^{5})}}&([13]^5, [3]^{3})\\
\hline
{\cellcolor{light-gray}(5, [11]^{6})}&{\cellcolor{med-pink}\color{blue}{([7]^2, [9]^{6})}}&{\cellcolor{aqua}{\color{purple}([9]^{3}, [7]^{6})}}\\
\hline
{\cellcolor{light-gray}(5, [11]^{7})}&{\cellcolor{med-pink}\color{blue}{([7]^2, [9]^{7})}}&{\cellcolor{aqua}{\color{purple}([9]^{3}, [7]^{7})}}\\
\hline
{\cellcolor{light-gray}(5, [11]^{8})}&{\cellcolor{med-pink}\color{vivid-blue}{([7]^2, [9]^{8})}}&{\cellcolor{med-pink}\color{vivid-blue}{([7]^2, [9]^{9})}}\\
\hline
{\cellcolor{light-gray}(5, [11]^{9})}&{\cellcolor{light-gray}(5,[11]^{10})}&{\cellcolor{light-gray}(5,[11]^{11})}\\
\hline\end{array}$$
\caption{Table $\mathcal T(Q), Q=(13,3)$.}\label{13,3table}
\end{table}

\vskip 0.4cm
\begin{table}[ht]
\large
$$\begin{array}{|l|l|l|l|}
\hline 

(14,4)&(14,[4]^2)&(14,[4]^3)&(14,[4]^4)\\
\hline
([14]^2,4)&([14]^2,[4]^2)&([14]^2,[4]^3)&([14]^2,[4]^4)\\
\hline 
{\cellcolor{light-gray}(6, [12]^{3})}&([14]^3,[4]^2)&([14]^3,[4]^3)&([14]^3,[4]^4)\\
\hline
{\cellcolor{light-gray}(6, [12]^{4})}&([14]^4, [4]^{2})&([14]^4, [4]^{3})&([14]^4, [4]^{4})\\
\hline
{\cellcolor{light-gray}(6, [12]^{5})}&{\cellcolor{med-pink}{([8]^2, [10]^{5})}}&{\cellcolor{aqua}{([10]^{3}, [8]^{5})}}&([14]^5, [4]^{4})\\
\hline
{\cellcolor{light-gray}(6, [12]^{6})}&{\cellcolor{med-pink}{([8]^2, [10]^{6})}}&{\cellcolor{aqua}{([10]^{3}, [8]^{6})}}&{\cellcolor{yellow}{([12]^{4}, [6]^{6})}}\\
\hline
{\cellcolor{light-gray}(6, [12]^{7})}&{\cellcolor{med-pink}{([8]^2, [10]^{7})}}&{\cellcolor{aqua}{([10]^{3}, [8]^{7})}}&{\cellcolor{aqua}{([10]^{3}, [8]^{8})}}\\
\hline
{\cellcolor{light-gray}(6, [12]^{8})}&{\cellcolor{med-pink}{([8]^2, [10]^{8})}}&{\cellcolor{med-pink}{([8]^2, [10]^{9})}}&{\cellcolor{med-pink}{([8]^2, [10]^{10})}}\\
\hline
{\cellcolor{light-gray}(6, [12]^{9})}&{\cellcolor{light-gray}(6,[12]^{10})}&{\cellcolor{light-gray}(6,[12]^{11})}&{\cellcolor{light-gray}(6,[12]^{12})}\\
\hline\end{array}$$
\caption{Table $\mathcal T(Q), Q=(14,4)$.}\label{14,4table}
\end{table}
\begin{example}\label{hook1ex}  We illustrate Theorem \ref{Zthm} by giving the  table $\mathcal T(Q)$ for $Q=(13,3)$  (Table~\ref{13,3table}) and for $Q=(14,4)$ (Table~\ref{14,4table}). The entries of the table use the notation $[n]^k$ for the almost rectangular partition of $n$ having $k$ parts. The table $\mathcal T(Q)$ has a decomposition into ``A-rows'' and ``B/C hooks'' (see Definition \ref{tabledef}, and Remark \ref{tablerem}): both Table~\ref{13,3table}) and Table~\ref{14,4table}) show horizontal rows or partial rows comprised of type A partitions
(these cells of the table are unshaded), and  type B/C hooks (each hook is shaded or colored). For the definition of type A,B,C partitions see Definition~\ref{casesdef}. For $Q=(13,3)$ the table $\mathcal T(Q)$ has three B/C hooks: of the form $\{(5,[11]^k), 4\le k\le 11\}$, $\{([7]^2, [9]^k), 5\le k\le 9\}$, and $\{([9]^3,[7]^k), 6\le k\le 7\}$. For $Q=(14,4)$ the table $\mathcal T(Q)$ has four B/C hooks: the fourth B/C ``hook'' is comprised of a single cell
$P=([12]^4,[6]^6)$.
\end{example}
\subsubsection{Overview.} 
In Section \ref{notationprelimsec} we first review some results we will need.
In Section \ref{posetmapsec} we recall the poset $\mathcal D_P$ associated to the nilpotent commutator $\mathcal N_B$ of  $B=J_P$ and more particularly  to a maximal nilpotent subalgebra $\mathcal U_B$ of the centralizer $\mathcal C_B$.
Let $Q=(u,u-r)$ with $u>r\ge 2$ and put $B=J_Q$. After dividing the partitions in $\mathfrak Q^{-1}(Q)$ into three types A, B and C, in  Section \ref{threesubsetsec}, we prove in Section \ref{tablesec}  the main Theorem~\ref{tablethm} which specifies the filling of the table $\mathcal T(Q)$ with A rows and B/C hooks.  We also give some properties of the tables in Remark \ref{tablerem} and we display the $Q=(27,3)$ table in Example \ref{27,3ex} and Table \ref{27,3table}. We obtain in
 Corollary~\ref{normalpatterncor} the alternating pattern case first shown by R. Zhao \cite{Z}, which occurs for $u\gg r$.\footnote{These are called \emph{normal} patterns in \cite{IKVZ} and \cite{Z}.} In Section \ref{tablecompletenesssec} we show  that the table $\mathcal T(Q)$ is the complete inverse image of $Q$ under the map $\mathfrak Q$ (Theorem~\ref{table2thm}).   \par 
After reviewing P. Oblak's recursive conjecture in Section \ref{recursiveconjsec}, we propose in Section \ref{boxconjsubsec}  the Box Conjecture \ref{Zgenconj} for $\mathfrak Q^{-1}(Q)$. The combinatorial part of the Box Conjecture in short states
that if $Q$ is a partition with $k$ parts differing pairwise by at least two, then its key ${\sf S}_{Q}$ gives the lengths of the sides of a $k$-dimensional box $\mathcal B(Q)$ containing the elements of $\mathfrak Q^{-1}(Q)$. 
In Section~\ref{boxspecialsec} we show elements of the Box Conjecture for some partitions $Q$ having three parts.\par
This article is a shortening and revision of \cite{IKVZ}. We focus here on just the Table Theorem and Box Conjecture. We also flipped the B/C hooks of the table: this is useful for an article in preparation with M. Boij, in which we will study the equations of loci $\mathfrak Z_P\subset \mathcal N_B,$ where $B=J_Q$ with $Q=(u,u-r)$ and $P$ is a partition in the table $\mathcal T(Q)$. Here $\mathfrak Z_P$ is the Zariski closure in $\mathcal N_B$ of 
\begin{equation}\label{locidefeqn}
\{A\in \mathcal N_B\mid A \text { has Jordan type } P\}.
\end{equation}
We believe that the present article and its sequel yield a new approach to viewing the map $\mathfrak Q: P \to \mathfrak Q(P)$. While our methods are elementary, our results suggest that there might be interesting algebraic and geometric explanations and consequences.\par
\section{Preliminaries and Background.}\label{prelimbackgroundsec}
\subsection{Notation and Preliminaries.}\label{notationprelimsec}
We fix notation and summarize some concepts and results we will need. Let $P=(p_1,\ldots, p_s)$ be a \emph{partition} of the positive integer $n$ having $s$ parts. This means that $ p_1\ge \cdots \ge p_s>0$ and $p_1+p_2+\cdots +p_s=n$. We denote by $S_P$ the set of parts of $P$, i.e.
$S_P=\{p_1,p_2,\ldots ,p_s\}$. Note that $1\le |S_P|\le s$.
Recall that the Ferrers diagram of $P$ has rows whose lengths are the parts of $P$, which we arrange with the row $p_i$ above the row $p_{i+1}, i\in\{1,\ldots ,s-1\}$.  We denote by $P^\vee$ the conjugate partition to $P$:  the rows of the Ferrers diagram of $P^\vee$ are the columns of the Ferrers diagram of $P$. 
The \emph{regular partition} of $n$, denoted by  $[n]$ or $(n)$, is the only partition of $n$ with a single part.  Recall from Definition \ref{ARdef} that an almost rectangular partition $[n]^k$ 
of $n$ has $k$ parts whose maximum pairwise difference is zero or one.  We denote by $s^k$ the partition of $k\cdot s$ having $k$ parts equal to $s$.
Write $n=qk+{\sf r}$ with ${\sf r},q\in \mathbb N$ and $ 0\le {\sf r}< k$ and put  $d=k\cdot \lceil {\frac{n}{k}}\rceil -n$. Then 
\begin{equation*}
d=\begin{cases}k-{\sf r}&\text { if } {\sf r}\not=0\\
0 &\text {  if } {\sf r}=0,
\end{cases}
\end{equation*}
and we have that the almost rectangular partition $[n]^k$ satisfies
\begin{equation}\label{nkeq} [n]^k=\left( (q+1)^{\sf r},q^{k-{\sf r}}\right)=\left(\big\lceil {\frac{n}{k}}\big\rceil^{k-d},\big\lfloor \frac{n}{k}\big\rfloor^d\right).
\end{equation}
\begin{figure}
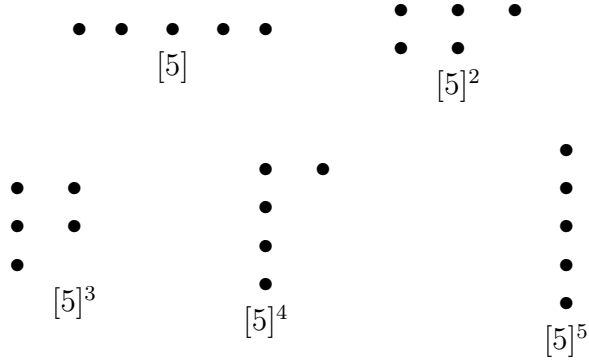

$\qquad\qquad\quad\qquad\qquad\qquad
\quad\begin{array}{ccccc}
\bullet&\bullet&\bullet&\bullet&\bullet\\
&&[5]&&
\end{array}\qquad\quad
\begin{array}{ccc}
\bullet&\bullet&\bullet\\\bullet&\bullet&\\
&[5]^2&
\end{array}
$
\vskip 0.3cm
$
\qquad\qquad\quad\qquad\qquad\quad\begin{array}{ccc}
\bullet&\bullet&\\
\bullet&\bullet&\\
\bullet&&\\
&{[5]^3}&
\end{array}\qquad\quad
\begin{array}{cc}
\bullet&\bullet\\
\bullet&\\
\bullet&\\
\bullet&\\
{[5]^4}&
\end{array}\qquad\qquad\qquad
\begin{array}{cc}
\bullet&\\
\bullet&\\
\bullet&\\
\bullet&\\
\bullet&\\
{[5]^5}&
\end{array}
$
\caption{The almost rectangular partitions of $5$.}\label{5slmostrectfig}
\end{figure}
See Figure \ref{5slmostrectfig} for the almost rectangular partitions of $5$.
\vskip 0.2cm
\noindent {\it Notation}: Given any partition $P$ of $n$ we denote by $J_P$ the unique Jordan matrix whose diagonal Jordan blocks have lengths $p_1,\ldots ,p_s$, arranged in descending order of lengths.  Given a nilpotent $n\times n$ matrix $A$ we denote by $P_A$ its \emph{Jordan type}; it is the partition giving  the  sizes of the blocks of the Jordan block matrix similar to $A$ (we write $J_{P_A}\sim A$).  Recall that the corank of $A$ is $(n-$ rank  $A)$, the dimension of the kernel of $A$. We take $A^0=I_n$, the $n\times n$ identity matrix. The following result is standard (see, e.g., \cite[Lemma 6.2.2]{CM}). 
\begin{lemma}\label{partsilem} The number of parts greater than or equal to $i$ in $P_A$ is the  difference
\begin{equation}
\text { corank } A^i- \text { corank }A^{i-1}.
 \end{equation}
\end{lemma}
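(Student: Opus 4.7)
The plan is to reduce immediately to a matrix in Jordan normal form, since both sides of the claimed identity depend only on the similarity class of $A$: the Jordan type $P_A$ is a similarity invariant by definition, and $\mathrm{corank}\,A^i=\dim \ker A^i$ is preserved under conjugation of $A^i$, hence under conjugation of $A$. So without loss of generality I may assume $A=J_{P_A}$, a block diagonal matrix whose blocks are the Jordan blocks $J_{p_1},\ldots, J_{p_s}$.

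Next I would analyze a single Jordan block. For $J_p$ (of size $p$), a direct computation shows $J_p^i$ has rank $\max(p-i,0)$, hence
\begin{equation*}
\mathrm{corank}\,J_p^i=\min(i,p).
\end{equation*}
Consequently
\begin{equation*}
\mathrm{corank}\,J_p^i-\mathrm{corank}\,J_p^{i-1}=\min(i,p)-\min(i-1,p)=\begin{cases}1 & \text{if }p\ge i,\\ 0 & \text{if }p<i.\end{cases}
\end{equation*}
Thus a single block $J_p$ contributes $1$ to the telescoped difference exactly when $p\ge i$, and nothing otherwise.

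Finally, since $A=J_{P_A}$ is block diagonal, both $\ker A^i$ and $\ker A^{i-1}$ decompose as direct sums of the kernels coming from the individual blocks. Hence $\mathrm{corank}\,A^i$ is the sum of $\mathrm{corank}\,J_{p_j}^i$ over $j=1,\ldots,s$, and the same for $i-1$. Taking the difference and applying the previous block-level computation, we get
\begin{equation*}
\mathrm{corank}\,A^i-\mathrm{corank}\,A^{i-1}=\#\{j\mid p_j\ge i\},
\end{equation*}
which is exactly the number of parts of $P_A$ that are $\ge i$. No step is particularly delicate; the only thing to be careful about is the boundary case $i=1$, which works because $A^0=I_n$ has corank $0$ and the number of parts $\ge 1$ is $s=\mathrm{corank}\,A$ (the dimension of $\ker A$), consistent with the formula.
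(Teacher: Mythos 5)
Your proof is correct. The paper does not prove this lemma itself; it cites it as standard (pointing to [CM, Lemma 6.2.2]), so there is no in-paper argument to compare against. Your approach — reduce to Jordan normal form by similarity invariance of both sides, compute $\mathrm{corank}\,J_p^i=\min(i,p)$ for a single block, take the telescoping difference to see each block contributes exactly the indicator $[p\ge i]$, and sum over the block-diagonal decomposition — is the standard textbook proof, and the block-level computation and boundary check at $i=1$ are all accurate. Nothing to add.
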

\begin{lemma}\label{PBklem} Let $B= J_{(n)}$. Then  $P_{B^k}=[n]^k$. 
\end{lemma}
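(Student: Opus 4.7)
The plan is to reduce everything to the rank (equivalently, corank) sequence of the powers of the single Jordan block $B=J_{(n)}$, which is immediate, and then apply Lemma \ref{partsilem} to the matrix $B^k$ to read off its Jordan type.

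\textbf{Step 1: Coranks of powers of $B$.} Since $B=J_{(n)}$ acts on $V={\sf k}^n$ with standard basis $e_1,\dots,e_n$ by $Be_1=0$ and $Be_i=e_{i-1}$ for $i\ge 2$, the matrix $B^j$ sends $e_i$ to $e_{i-j}$ (or to $0$ if $i\le j$). Hence
\[
\mathrm{corank}\,B^j=\min(j,n).
\]

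\textbf{Step 2: Count the parts of $P_{B^k}$.} By Lemma \ref{partsilem} applied to $A=B^k$, the number of parts of $P_{B^k}$ that are $\ge i$ equals
\[
\mathrm{corank}\,(B^k)^i-\mathrm{corank}\,(B^k)^{i-1}=\min(ki,n)-\min(k(i-1),n).
\]
Writing $n=qk+{\sf r}$ with $0\le {\sf r}<k$, a short case analysis gives: the difference equals $k$ for $1\le i\le q$, equals ${\sf r}$ for $i=q+1$, and equals $0$ for $i\ge q+2$.

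\textbf{Step 3: Assemble the partition.} From Step 2, $P_{B^k}$ has exactly $k$ parts, with ${\sf r}$ of them equal to $q+1$ and $k-{\sf r}$ equal to $q$ (if ${\sf r}=0$, all $k$ parts equal $q$). Comparing with the formula
\[
[n]^k=\bigl((q+1)^{\sf r},\,q^{k-{\sf r}}\bigr)
\]
from equation \eqref{nkeq}, we conclude $P_{B^k}=[n]^k$.

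There is no real obstacle: Steps 1 and 2 are direct computations, and Step 3 is just matching the resulting multiplicities to the explicit form of $[n]^k$. The only care needed is to handle the boundary case ${\sf r}=0$ separately so as to conclude correctly that there are $k$ parts all equal to $q=n/k$.
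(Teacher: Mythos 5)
Your proof is correct, and it uses the same two ingredients as the paper's argument: the corank sequence of the powers of $B^k$, and Lemma \ref{partsilem}. The difference is one of completeness rather than of method. The paper's proof only invokes the $i=1$ instance of Lemma~\ref{partsilem} (corank $B^k=k$ gives $k$ parts) together with the nilpotency bound $(B^k)^{q+1}=0$ to cap the largest part at $q+1$, and then states ``the Lemma follows.'' Read literally this leaves a small gap: knowing that a partition of $n$ has $k$ parts each $\le q+1$ does not by itself pin it down (e.g.\ for $n=10$, $k=4$ both $(3,3,3,1)$ and $(3,3,2,2)$ qualify). Your version closes that gap by applying Lemma~\ref{partsilem} for every $i$: from $\mathrm{corank}\,B^j=\min(j,n)$ you read off that there are $k$ parts $\ge q$, exactly ${\sf r}$ parts $\ge q+1$, and none $\ge q+2$, which forces the multiplicities $(q+1)^{\sf r}q^{k-{\sf r}}=[n]^k$ by \eqref{nkeq}. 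So you are running the paper's argument to completion; if anything your write-up is the more airtight of the two.
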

\begin{proof} Evidently $B^k$ has corank $k$. The number of parts of $P_A$ is the corank of $A$ so  $P_{B^k}$ has $k$ parts.  Let  $q=\lfloor \frac{n}{k}\rfloor$. Then 
$(B^k)^{(q+1)}=0$, so no part of $P_{B^k}$ is greater than $q+1$. Also, writing $n=kq+r$ with $0\le r<k$ we have $(B^k)^q$ has corank $kq$ and rank $r$: since no part is greater than $q+1$, this implies that there are exactly $r$ parts of $P_{B^k}$ equal to $q+1$. It follows that the remaining $k-r$ parts are each equal to $q$, so $P_{B^k}$ is almost rectangular and equal to $[n]^k$.  \end{proof}\par
This allows us to describe $\mathfrak Q^{-1}(Q)$ when $Q=(n)$ has a single part. 

\begin{corollary}  \label{cor:commute_with_principal}
If $A$ is a nilpotent matrix commuting with $J_{(n)}$ then $P_A=[n]^k$ for some $k$. Consequently, $\mathfrak Q^{-1}([n])$ is the set of almost rectangular partitions $\{[n]^k,1\le k\le n\}$. 
\end{corollary}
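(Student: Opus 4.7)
The plan: the first assertion rests on the classical fact that the centralizer $\mathcal C_B$ of the regular nilpotent $B=J_{(n)}$ is precisely ${\sf k}[B]$, the subalgebra of polynomials in $B$; indeed $I, B, B^2,\dots,B^{n-1}$ are linearly independent and commute with $B$, while $\dim\mathcal C_B=n$. So any $A\in\mathcal C_B$ has the form $A=f(B)=\sum_{i=0}^{n-1}c_iB^i$. If $c_0\ne 0$, then $A=c_0I+(\text{nilpotent})$ is a unit plus a nilpotent and hence invertible, contradicting the nilpotence of $A$; thus $c_0=0$.

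Let $k\ge 1$ be the smallest index with $c_k\ne 0$ and write $A=B^k\cdot U$, where $U=c_kI+c_{k+1}B+\cdots+c_{n-1}B^{n-1-k}$. Since $U$ has nonzero constant term $c_k$ while the remaining summand is nilpotent, $U$ is invertible with inverse a polynomial in $B$, and in particular $U$ commutes with $B$. Hence $A^i=B^{ki}U^i$, so $\ker A^i=\ker B^{ki}$ for every $i\ge 0$. By Lemma \ref{partsilem} the partition $P_A$ is determined by the sequence of coranks of the powers of $A$, so $P_A=P_{B^k}=[n]^k$ by Lemma \ref{PBklem}.

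For the inverse-image statement I would prove both containments. If $P\in\mathfrak Q^{-1}([n])$, the definition of $\mathfrak Q$ furnishes a nilpotent $A$ commuting with $J_P$ with $P_A=(n)$. Choosing $g\in\Gl_n({\sf k})$ with $gAg^{-1}=J_{(n)}$, the matrix $gJ_Pg^{-1}$ lies in $\mathcal C_{J_{(n)}}$ and has Jordan type $P$, so the first part forces $P=[n]^k$ for some $k$. Conversely, for $P=[n]^k$ Lemma \ref{PBklem} shows that $J_{(n)}^k$ has Jordan type $[n]^k$, so a suitable conjugate $A$ of $J_{(n)}$ commutes with $B=J_P$ and has Jordan type $(n)$. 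Since $(n)$ is the dominance-order maximum among partitions of $n$, the locus $\{A'\in\mathcal N_B:P_{A'}=(n)\}$ is Zariski-open in $\mathcal N_B$; being nonempty and $\mathcal N_B$ irreducible, this locus is dense, yielding $\mathfrak Q(P)=(n)$.

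The main subtlety beyond the first part lies in the very last step, where one invokes upper semi-continuity of Jordan type (via closedness of the rank conditions $\mathrm{rank}\,A^i\le d$) together with irreducibility of $\mathcal N_B$. Both are standard inputs; the latter is due to Basili in general. For the partitions $P=[n]^k$ at hand one can alternatively verify openness directly via the single condition $\mathrm{rank}\,A^{n-1}\ge 1$, which cuts out the regular nilpotents.
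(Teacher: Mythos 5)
Your proof is correct and follows essentially the same route as the paper: both rest on the fact that $\mathcal C_{J_{(n)}}={\sf k}[J_{(n)}]$ (the paper cites DGKO, you supply the standard dimension count), factor $A=B^kU$ with $U$ invertible, and conclude $P_A=[n]^k$. You fill in a few details the paper leaves tacit — the corank argument via Lemma \ref{partsilem} in place of the bare assertion $A\sim B^k$, and the explicit semicontinuity/irreducibility argument for $\mathfrak Q([n]^k)=(n)$ — but the underlying idea is identical.
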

\begin{proof} Lemma \ref{PBklem} implies that if $B=J_{(n)}$ then $P_{B^k}=[n]^k$, so $\mathfrak Q([n]^k)=(n)$.  The matrices $A$ commuting with a regular nilpotent matrix $B$ are the polynomials $A=p(B)$ where $p\in {\sf k}[x]$ \cite[Theorem 2.8]{DGKO}. When $p=x^k\cdot p'$ where $ p'=a_ux^u+\cdots + a_0$ with $ a_0\not=0$ then $p'(B)$ is invertible, so $A=p(B)\sim B^k$ and $P_A=[n]^k$.
\end{proof}

Loosely speaking, the main result in this paper is the generalization of Corollary~\ref{cor:commute_with_principal} to the case where $Q$ has two parts.
Recall that $r_P$ is the smallest number of almost rectangular partitions whose union is $P$.  
\begin{theorem}(R. Basili \cite{Bas})\label{rpthm}  The partition $\mathfrak Q(P)$ has $r_P$ parts. 
\end{theorem}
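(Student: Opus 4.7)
The plan is to bound the number of parts of $\mathfrak Q(P)$ from above and below by $r_P$. Since the number of parts of $P_A$ equals $\dim\ker A$ for any nilpotent $A$, and since Basili's theorem ensures $\mathcal N_B$ is irreducible, the generic value on $\mathcal N_B$ is the minimum of $\dim\ker A$ (by upper semicontinuity of corank). Hence it suffices to exhibit one $A\in\mathcal N_B$ with Jordan type having exactly $r_P$ parts, and to show that no element of $\mathcal N_B$ has fewer.

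For the upper bound I would construct $A$ explicitly. By definition of $r_P$, write $P=P_1\cup\cdots\cup P_{r_P}$ with each $P_i=[n_i]^{k_i}$ almost rectangular; this corresponds to a $B$-invariant splitting $V=V_1\oplus\cdots\oplus V_{r_P}$ with $B|_{V_i}$ of Jordan type $P_i$. By Lemma~\ref{PBklem}, a regular nilpotent $T_i\sim J_{(n_i)}$ on $V_i$ satisfies $T_i^{k_i}\sim J_{[n_i]^{k_i}}$, so after adjusting the basis of each $V_i$ we may arrange $B|_{V_i}=T_i^{k_i}$. The matrix $A:=T_1\oplus\cdots\oplus T_{r_P}$ is then nilpotent, commutes with $B$, and has Jordan type $(n_1,\ldots,n_{r_P})$ with exactly $r_P$ parts, as required.

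For the lower bound I would reverse the construction: given $A\in\mathcal N_B$ with $m$ parts in its Jordan type, extract a $B$-invariant direct sum $V=W_1\oplus\cdots\oplus W_m$ in which each $B|_{W_i}$ has almost-rectangular Jordan type, which exhibits $P$ as a union of $m$ almost-rectangular partitions and forces $m\ge r_P$. A natural attempt is to lift a basis of $V/AV$ to generators $v_1,\ldots,v_m$ of the cyclic ${\sf k}[A]$-summands of $V$, and to refine this lift so that each ${\sf k}[A]\cdot v_i$ is $B$-stable, possibly after a deformation within $\mathcal N_B$ using the structure of the maximal nilpotent centralizer $\mathcal U_B$.

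The main obstacle is this lower bound: a priori $B$ mixes the ${\sf k}[A]$-cyclic summands, so producing a $B$-stable refinement whose factors have almost-rectangular $B$-action requires careful bookkeeping on the joint $(A,B)$-module structure of $V$. To finish I would likely either match $A$ against a standard-form element of $\mathcal U_B$ whose cyclic decomposition is visible, or proceed inductively on the number of distinct parts of $P$, peeling off an almost-rectangular summand at each step and reducing to a strictly smaller partition.
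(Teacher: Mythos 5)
The paper does not prove this theorem; it cites R.\ Basili's result from \cite{Bas}. Your upper bound construction is correct and is the standard one: writing $P=\bigcup_{i=1}^{r_P}P_i$ with each $P_i=[n_i]^{k_i}$ almost rectangular, taking regular nilpotents $T_i$ on the summands $V_i$ so that $T_i^{k_i}$ has Jordan type $P_i$, and setting $A=T_1\oplus\cdots\oplus T_{r_P}$ gives a nilpotent element of $\mathcal C_B$ with exactly $r_P$ Jordan blocks. Combined with irreducibility of $\mathcal N_B$ and upper semicontinuity of corank, this shows $\mathfrak Q(P)$ has at most $r_P$ parts.

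The lower bound is where the substance lies, and there you have a genuine gap, which you yourself flag. The $B$-invariant, $A$-cyclic decomposition $V=W_1\oplus\cdots\oplus W_m$ that you want does not exist for arbitrary $A\in\mathcal N_B$: take $A=0$, for which $m=n$, and you would need $V$ to split into $n$ one-dimensional $B$-invariant subspaces, forcing $B=0$, whereas $B=J_P\neq 0$ unless $P=(1^n)$. Restricting to a generic $A$ is a legitimate reduction (the generic corank is the minimum), but you supply no argument that a generic $A$ admits the decomposition; establishing that is at least as hard as the theorem itself, since the existence of the decomposition already encodes ``$m\geq r_P$.'' Your fallbacks (matching against a standard-form element of $\mathcal U_B$, or peeling off almost-rectangular summands inductively) are only sketched and do not close this. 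Basili's actual argument does not go through a joint $(A,B)$-module decomposition; rather it is a direct computation of the generic rank of an element of the nilradical $\mathcal U_B$, using the explicit block structure of the centralizer $\mathcal C_B$ with respect to the decomposition $V=\bigoplus_i V_i^{n_i}$. That combinatorial rank count, not a structure theorem for the module $V$ over $\mathsf{k}[A,B]$, is what yields the lower bound $\geq r_P$.
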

The following result is shown for $\cha {\sf k}=0$ in \cite[Theorem 2.1 and Example 2.5a]{Pan}, and for general infinite $\sf k$ in 
\cite[Theorem 1.12]{BI}. We say that a partition $P$ is \emph{stable} if $\mathfrak Q(P)=P$.
\begin{theorem}\label{stable1thm} A partition is  stable if and only if its parts differ pairwise by at least two.
\end{theorem}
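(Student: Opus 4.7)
The equivalence has two directions.

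Forward direction: if $P = \mathfrak Q(P)$ then $P$ lies in the image of $\mathfrak Q$, and Theorem \ref{stablethm} forces its parts to differ pairwise by at least two.

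Converse: assume $P = (p_1, \dots, p_s)$ satisfies $p_i - p_{i+1} \ge 2$ for $i = 1, \dots, s-1$. The plan is to pin down $\mathfrak Q(P)$ via three pieces of data. First I count parts: an almost rectangular partition has all parts within distance one, but any two distinct parts of $P$ differ by at least two, so no almost rectangular sub-partition of $P$ can contain two parts of $P$. This forces $r_P \ge s$; the trivial decomposition into singletons gives $r_P \le s$, hence $r_P = s$, and by Theorem \ref{rpthm} the partition $\mathfrak Q(P)$ has exactly $s$ parts. Second, the matrix $J_P$ itself lies in $\mathcal N_B$ with Jordan type $P$, so the generic-orbit interpretation of $\mathfrak Q(P)$ yields $\mathcal O_{J_P} \subseteq \overline{\mathcal O_{\mathfrak Q(P)}}$, i.e.\ $P \le \mathfrak Q(P)$ in dominance. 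Writing $Q := \mathfrak Q(P)$ and combining with Theorem \ref{stablethm}, we know $Q$ has $s$ parts differing pairwise by $\ge 2$ and $Q \ge P$ in dominance.

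The third step, proving $Q \le P$, is the crux. Combinatorics alone does not suffice: for example when $n=6$, $s=2$ both $(5,1)$ and $(4,2)$ have $s$ parts with pairwise gaps $\ge 2$ and $(5,1) > (4,2)$ in dominance, so the constraints derived above leave more than one candidate. The algebraic input I would use is the $sl_2$-triple $(B,H,F)$ attached to $B=J_P$: the $H$-eigenspace decomposition $V = \bigoplus V_m$ has dimensions prescribed by $P$, and $\mathcal C_B$ inherits a non-negative grading whose degree-zero piece is reductive and whose strictly positive part recovers $\mathcal U_B$. When the parts of $P$ differ pairwise by $\ge 2$, a weight-count on the graded centralizer confines the image of $A^i$ for $A\in \mathcal N_B$ to a union of weight spaces whose total dimension equals $\mathrm{rank}(B^i)$; this gives $\mathrm{rank}(A^i) \le \mathrm{rank}(B^i)$ for every $i \ge 0$ and every $A \in \mathcal N_B$. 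Translating ranks back into Jordan-type data yields $P_A \le P$ in dominance for every $A \in \mathcal N_B$; in particular $Q \le P$, and combined with the reverse inequality this gives $\mathfrak Q(P) = P$.

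The main obstacle is this last weight-counting rigidity. The combinatorial steps only narrow $\mathfrak Q(P)$ to a set of candidates sharing the correct number of parts and gap property; pinning it down to $P$ demands genuine algebraic input, either the $sl_2$-weight analysis sketched above or, equivalently, a chain-decomposition computation on the poset $\mathcal D_P$ attached to $\mathcal U_B$ (to be recalled in Section \ref{posetmapsec}), which under the stable hypothesis decomposes into $s$ disjoint chains of lengths $p_1,\dots,p_s$.
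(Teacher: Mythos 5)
The paper does not supply a proof of this theorem; immediately before the statement it is attributed to \cite[Theorem~2.1 and Example~2.5a]{Pan} for characteristic zero and to \cite[Theorem~1.12]{BI} for a general infinite field. So there is no in‑paper argument to compare against, and I can only assess your sketch on its own.

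Your forward direction works, though it is cleaner to bypass Theorem~\ref{stablethm} (which carries characteristic hypotheses) and argue directly: if two parts of $P$ differ by at most one they lie in a common almost rectangular subpartition, so $r_P<s$ and, by Theorem~\ref{rpthm}, $\mathfrak Q(P)$ has fewer parts than $P$, hence $\mathfrak Q(P)\neq P$. For the converse your setup (part count via $r_P=s$, $P\le\mathfrak Q(P)$ by semicontinuity, combinatorics alone insufficient) is correct, but the pivotal claim is false as stated. You assert that the image of $A^i$ is confined to a union of weight spaces of total dimension $\mathrm{rank}(B^i)$. Take $P=(5,2)$ and $i=1$: the weights are $-4,-2,-1,0,1,2,4$, each with $\dim V(m)=1$; even granting that $A$ has weight $\ge 2$, its image is confined at best to $\bigoplus_{m\ge -2}V(m)$, which has dimension $6$, whereas $\mathrm{rank}(B)=5$. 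A filtration count on the target never recovers $\mathrm{rank}(B^i)$. To repair the argument you need two ingredients you do not supply. First, the precise use of the gap-$\ge 2$ hypothesis: because $P$ has distinct parts the weight-zero elementary maps $e_{i,k}$ are absent, and because the gaps are $\ge 2$ every $\alpha_i$, $\beta_i$, $w_i$ has weight $\ge 2$, so $\mathcal U_B=\mathcal N_B\subset\bigoplus_{k\ge 2}\mathcal C_B(k)$ rather than merely $\bigoplus_{k\ge 1}$; hence $A^i$ has weight $\ge 2i$, the same as $B^i$. Second, the correct rank bound is not a target-filtration estimate but a term-rank (maximal matching) bound on the zero pattern of a weight-$\ge 2i$ endomorphism, using the symmetry and unimodality of $m\mapsto\dim V(m)$ to get $\mathrm{rank}(A^i)\le\sum_m\min(\dim V(m),\dim V(m+2i))=\mathrm{rank}(B^i)$. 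Your alternative via $\mathcal D_P$ has a parallel gap: decomposing the poset into its $s$ rows of lengths $p_1,\dots,p_s$ only yields $\lambda(\mathcal D_P)\ge P$; you would also need the antichain computation to pin down $\lambda(\mathcal D_P)=P$ and the Gansner--Saks inequality $\mathfrak Q(P)\le\lambda(\mathcal D_P)$ to transfer it, neither of which you invoke.
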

A partition whose parts differ pairwise by at least two is termed ``super distinct'' or ``Rogers-Ramanujan'' in the literature on partitions. T. Ko\v{s}ir and P. Oblak showed that $\mathfrak Q(P)$ is stable.
\begin{theorem}(T. Ko\v{s}ir and P. Oblak) \cite{KO})\label{stablethm} Suppose that $\cha {\sf k}=0$ or $\cha {\sf k}=p>n$ and $\sf k$ is infinite. Then the partition $\mathfrak Q(P)$ has parts that differ pairwise by at least two.
\end{theorem}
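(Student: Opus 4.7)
The plan is, via Theorem~\ref{stable1thm}, to reduce the statement to stability of $\mathfrak{Q}(P)$: since a partition has parts pairwise differing by at least two iff it is stable, it is enough to show that $Q := \mathfrak{Q}(P)$ satisfies $\mathfrak{Q}(Q) = Q$. Set $B = J_P$ and pick a generic $A \in \mathcal{N}_B$, so $P_A = Q$. The direction $\mathfrak{Q}(Q) \ge Q$ in dominance order is immediate from upper-semicontinuity of the Jordan-type stratification on the irreducible variety $\mathcal{N}_A$: the generic type dominates that of any specific element, here $A \in \mathcal{N}_A$ itself.

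The content of the theorem is the reverse inequality $\mathfrak{Q}(Q) \le Q$, namely that no nilpotent commuting with $A$ has Jordan type strictly greater than $Q$. Note that $B \in \mathcal{N}_A$ has type $P$, with $P \le Q$, so the point is that the genericity of $A$ must pin the answer at $Q$ rather than let it drift higher under the presence of such specific elements. My approach would be to use the characteristic hypothesis to invoke the Jacobson--Morozov theorem, extending $A$ to an $\mathfrak{sl}_2$-triple $(A, H, F)$, and thereby equipping $\mathcal{C}_A$ with a $\mathbb{Z}$-grading under $\mathrm{ad}(H)$. I would then reduce to homogeneous nilpotents of positive weight and bound the coranks of their powers, via Lemma~\ref{partsilem}, against the dimensions of the $\mathfrak{sl}_2$-isotypic components of $V$, which are determined by $Q$. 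The target conclusion is that $P_{A'} \le Q$ for every $A' \in \mathcal{N}_A$, which together with the previous paragraph forces $\mathfrak{Q}(Q) = Q$.

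The hardest step is this corank bound: both the weight-grading reduction and the weight-by-weight corank computation lean on $\mathfrak{sl}_2$-representation theory, which requires Jacobson--Morozov together with the complete reducibility and isotypic decomposition of $\mathfrak{sl}_2$-modules. In small positive characteristic these ingredients can fail, consistent with the stated hypothesis $\cha {\sf k} = 0$ or $\cha {\sf k} = p > n$ being precisely what the theorem requires; a secondary task in writing the proof out is to verify that the dimension bounds coming from the isotypic decomposition really do translate into the dominance estimate $P_{A'} \le Q$ rather than a weaker statement, which is where one would have to be most careful about the interplay between the $\mathrm{ad}(H)$-weights on $\mathcal{C}_A$ and the filtration by powers of $A'$.
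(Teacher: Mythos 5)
Your approach is genuinely different from the one the paper attributes to Ko\v{s}ir and Oblak. Their proof is commutative-algebraic: for a generic $A\in\mathcal N_B$ one shows that the Artinian ring ${\sf k}[A,B]$ is Gorenstein, hence (being of embedding dimension two) a complete intersection; under the characteristic hypothesis its Hilbert function is the conjugate partition $\mathfrak Q(P)^\vee$ (by \cite[Theorem 2.3]{BI}), and Macaulay's characterization of Hilbert functions of height-two Artinian complete intersections then forces the parts of $\mathfrak Q(P)$ to differ pairwise by at least two. Your proposal replaces this with an $sl_2$-theoretic route (Jacobson--Morozov, $\mathrm{ad}(H)$-grading, corank bounds), which is in the spirit of Panyushev's proof of the ``if'' direction of Theorem~\ref{stable1thm} rather than the Ko\v{s}ir--Oblak argument.

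However, as written the proposal has a genuine gap that makes it circular. After reducing (correctly, via the ``only if'' direction of Theorem~\ref{stable1thm}) to showing $\mathfrak Q(Q)=Q$ for $Q=\mathfrak Q(P)$, you must prove $\mathfrak Q(Q)\le Q$, i.e.\ that $P_{A'}\le Q$ for every $A'\in\mathcal N_A$. But the $sl_2$-triple through $A$, the grading on $\mathcal C_A$, and the isotypic decomposition of $V$ all depend only on the Jordan type $Q=P_A$ and not at all on the fact that $A$ was chosen generically in $\mathcal N_B$; your proposed corank argument never references $B$ again. If that argument succeeded it would prove $\mathfrak Q(Q)\le Q$ for \emph{every} partition $Q$, hence (with the easy reverse inequality) that every partition is stable -- which is false, e.g.\ $\mathfrak Q((2,2))=(3,1)$. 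So the genericity of $A$ in $\mathcal N_B$ must enter the $sl_2$-computation somewhere, and you do not say where or how. Put differently, the inequality $\mathfrak Q(Q)\le Q$ \emph{is} precisely the ``if'' direction of Theorem~\ref{stable1thm} applied to $Q$, and that direction requires as a hypothesis exactly the conclusion you are trying to establish -- that the parts of $Q$ differ pairwise by at least two. An auxiliary worry is the ``reduce to homogeneous nilpotents'' step: the $\mathbb G_m$-limit of $A'$ towards its extreme-weight component in general lies in the \emph{closure} of the stratum of $A'$, so its Jordan type may drop; the homogeneity of the loci $\mathfrak Z(P')$ (Proposition~\ref{homogprop}) does not by itself give that the maximal type is attained on a homogeneous element.
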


\begin{remark}\label{proofstablerem}
In \cite{KO}, Theorem \ref{stablethm} is stated for algebraically closed fields. In this somewhat technical remark, we review the Theorem's proof and explain why this hypothesis can be relaxed; that is, why the Theorem holds for any infinite field ${\sf k}$ with $\cha {\sf k}=0$ or $\cha {\sf k}>n$.
\par The proof of Theorem \ref{stablethm} depends on showing that when $B=J_P$, the Jordan block matrix of partition $P$, and the matrix $A\in \mathcal N_B$ is generic, then the Artinian algebra $\mathcal A={\sf k}[A,B]$ is Gorenstein: T. Ko\v{s}ir and P. Oblak show that the action of $\mathcal A$ on the vector space $V$ has a cocyclic vector or, equivalently, the poset  $\mathcal D_P$ (cf. Section~\ref{posetmapsec} below) whose elements are a basis of $V$ has a ``sink''. Since $\mathcal A$, being a quotient of the local ring ${\sf k}\{x,y\},$ has height two, that $\mathcal A$ is Gorenstein implies by a result of F.H.S. Macaulay (\cite[\S 14]{Mac}) that $\mathcal A$ is a complete intersection: this means that $\mathcal A={\sf k}\{x,y\}/I$, the quotient of ${\sf k}\{x,y\}$ by an ideal $ I=(f,g)$ where $f,g$ are in general non-homogeneous elements of ${\sf k}\{x,y\}$. When $\cha {\sf k}=0$ or $\cha {\sf k}>n$, it follows that the Hilbert function of $\mathcal A$ is the conjugate of the partition $\mathfrak Q(P)$~\cite[Theorem~2.20]{BI}. \par  Although \cite[Theorem~2.20]{BI} and \cite[Theorem~2.16]{BI} upon which it depends are stated for $\sf k$ algebraically closed, only $\sf k$ infinite is needed, along with the condition on $\cha {\sf k}$ given in Theorem \ref{stablethm}. This is because \cite[Theorem 2.16]{BI} about pencils of matrices $A+\lambda B, \lambda\in \sf k$ uses that the ideal $I$ in the regular local ring ${\sf k}\{x,y\}$ defining the algebra $\mathcal A$ has a ``normal basis'' in the direction $x+\lambda y$, for an open dense set of $\lambda$ in the affine line $\mathbb A^1$. This is equivalent to $x+\lambda y$ being a ``strong Lefschetz element'' for $\mathcal A$ and occurs when $\lambda$ is not a root of a certain monic polynomial over $\sf k$.\par
The characterization of the Hilbert functions of (non-graded) Artinian CI algebras of height two by F.H.S. Macaulay \cite{Mac} now implies the property that $\mathfrak Q(P)$ has parts that differ pairwise by at least two. For a discussion see the original article \cite{KO} and as well \cite[Sections 2.4,2.5]{BIK}. 
\end{remark}

Denote the partition $P$ by $(\cdots  i^{n_i}\cdots )$ meaning it has $n_i$ parts of length $i$.  
An almost rectangular subpartition $P'=(a^{n_a},(a-1)^{n_{a-1}})$ of $P$ defines a so-called ``U-chain'' $C_a$, which is a certain union of chains in a partially ordered set $\mathcal D_P$ naturally associated to $P$.
In Section \ref{posetmapsec} we briefly recall the definition of the poset $\mathcal D_P$, which plays an important role in understanding the map $P\to \mathfrak Q(P)$, and the definition of a U-chain. For the main results of this paper (Theorems \ref{tablethm} and \ref{table2thm}) all we will need is that 
for an almost rectangular subpartition $(a^{n_a},(a-1)^{n_{a-1}})$ of $P$ the length (or number of elements) of the U-chain $C_a$ is
\begin{equation}\label{Caeq}
|C_a|=an_a+(a-1)n_{a-1}+2\sum_{i>a} n_i.
\end{equation}

\begin{theorem}(P. Oblak \cite{Obl1})\label{indexthm}  The largest part of $\mathfrak Q(P)$ is $\max\{|C_a|: a$ is a part of $P \}$.
\end{theorem}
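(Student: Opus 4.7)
The plan is to prove the equality by establishing the two inequalities separately. Since the largest part of $\mathfrak Q(P)$ equals the index of nilpotency of a generic $A\in\mathcal N_B$, it suffices to show: (i) for every part $a$ of $P$ there exists $A_a\in\mathcal N_B$ with nilpotency index at least $|C_a|$, and (ii) every $A\in\mathcal N_B$ has nilpotency index at most $\max_a|C_a|$. By Lemma \ref{partsilem}, the largest part of $P_A$ is exactly the smallest $i$ with $A^i=0$, so both conclusions translate to statements about the top part of $P_A$ for generic $A$.

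For the lower bound, fix a part $a$ of $P$ and consider the almost rectangular subpartition $(a^{n_a},(a-1)^{n_{a-1}})$. Using the standard basis of $V={\sf k}^n$ adapted to the Jordan decomposition of $B=J_P$, label basis vectors by cells of the Ferrers diagram of $P$. I would construct an explicit $A_a\in\mathcal N_B$ that acts as a cyclic shift along the $|C_a|$ cells of the U-chain $C_a$ and kills all other basis vectors. The commutation relation $A_aB=BA_a$ forces $A_a$ to move horizontally one cell at a time within the middle strip of rows of length $a$ or $a-1$ (contributing $an_a+(a-1)n_{a-1}$ to the chain length), while it may visit a row of length strictly greater than $a$ only via two cells, one upon entry and one upon exit (contributing $2n_i$ for each $i>a$). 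Feasibility of such a construction in $\mathcal U_B$ is dictated precisely by the covering relations in $\mathcal D_P$, and a direct computation then yields $A_a^{|C_a|-1}\neq 0$.

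For the upper bound, I would use the poset $\mathcal D_P$ recalled in Section~\ref{posetmapsec}: any $A\in\mathcal U_B$ is a linear combination of elementary arrows indexed by the covering relations of $\mathcal D_P$, and the action of $A^i$ on a basis vector expands as a sum over length-$i$ directed paths in $\mathcal D_P$. The commutation constraint with $B$ forces any such path traversing a row of length $>a$ to use at least two cells of that row, while it may use every cell of a row of length $a$ or $a-1$. Hence the longest directed path realized by any arrow combination is $\max_a|C_a|$, and this bounds $\ind A$ from above for every $A\in\mathcal N_B$.

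I expect the upper bound to be the main obstacle. The lower bound is an essentially concrete matrix construction, but the upper bound requires a genuinely combinatorial argument showing that no superposition of elementary arrows in $\mathcal U_B$ can conspire to produce a Jordan block longer than the maximal U-chain. The sharp ``factor of $2$'' penalty $2\sum_{i>a}n_i$ is the delicate point: it reflects the fact that $AB=BA$ rigidly couples how an arrow enters and exits a Jordan block of $B$ of length strictly greater than $a$. Making this precise will likely require a careful analysis of the kernels and images of iterated powers, combined with the poset structure of $\mathcal D_P$, to exclude the possibility that cancellation across different paths yields a longer effective chain than any single $|C_a|$.
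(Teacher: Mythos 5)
The paper does not contain a proof of this statement; Theorem~\ref{indexthm} is quoted from \cite{Obl1}, with the extension to arbitrary infinite fields credited to \cite{BIK,IK}, so there is no in-paper argument to compare against. Your two-sided framework — a lower bound from exhibiting an element of $\mathcal N_B$ tracing a maximal U-chain, and an upper bound from bounding directed paths in $\mathcal D_P$ — is the standard approach and is correct in outline.

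The genuine gap, which you rightly flag as the main obstacle, is the upper bound: after reducing to ``the nilpotency index of $A\in\mathcal U_B$ is at most the length of the longest chain in $\mathcal D_P$,'' you still need the purely combinatorial fact that the longest chain has length exactly $\max_a|C_a|$, and your sketch does not establish it. The sentence ``any such path traversing a row of length $>a$ [must] use at least two cells'' is not an argument: the quantifier on $a$ is unresolved — for a given chain you must produce a \emph{single} $a$ that bounds its length by $|C_a|$ — and it is not obviously impossible for a chain to visit three or more vertices in some long row $i$ at the cost of fewer in the rows of length $a$ and $a-1$, thereby netting more vertices than any $|C_a|$. Ruling this out requires a careful analysis of how the elementary maps $\alpha_i,\beta_i,w_i,e_{i,k}$ rigidly couple consecutive rows, and that analysis is absent. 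The lower bound also has a smaller gap: taking $A_a$ to be a sum of elementary maps along $C_a$ is the right move, but you must additionally verify that the contribution of the U-chain path to the relevant matrix entry of $A_a^{|C_a|-1}$ is not cancelled by other paths of the same length connecting the same pair of vertices; the mere existence of the path does not, by itself, yield $A_a^{|C_a|-1}\neq 0$.
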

This result was originally shown for $\cha {\sf k}=0$; the proof was subsequently seen to be valid over any infinite field $\sf k$: see \cite{BIK,IK}. It is the main tool we use to prove Theorems \ref{tablethm} and \ref{table2thm}.  Recall that when $r_P=2$ the partition $\mathfrak Q(P)$ has two parts, and therefore is completely determined by Theorem \ref{indexthm}.

\subsection{Background: the poset $\mathcal D_P$.}\label{posetmapsec}
We now recall the poset $\mathcal D_P$ associated to $P$.  This poset plays an important role in understanding the map $P\to \mathfrak Q(P)$.  For example, it is behind the proofs of Theorem  \ref{stablethm} of P.~Oblak and T. Ko\v{s}ir and Theorem~\ref{indexthm} of P. Oblak.  The main proofs of Section \ref{rp=2app} refer to  the U-chains in the poset. However, we note for those readers less interested in this background that the proofs there will use equation \eqref{Caeq} and Theorem \ref{indexthm} above and may be read independently of the Definition \ref{posetdef} of the poset $\mathcal D_P$. 
 For further discussion of $\mathcal D_P$ see \cite{BIK,IK,Kh1,Kh2,KO}.
  \subsubsection{The poset $\mathcal D_P$.}\label{posetsec}
   Let $P$ be a partition of $n$, and let $B=J_P$ acting on the vector space $V$.  
 The poset  $\mathcal D_P$ has $n$ vertices corresponding to a basis $\mathfrak B$ of $V$ and we will decompose $\mathfrak B$ into the union of bases for submodules $V_i$.
First we recall the basis $\mathfrak B$, which we will label by certain triples $({\mathfrak u,i,k})$. We write $n_i$ for the multiplicity of the part $i$ in $P$, so $P=(\ldots  , i^{n_i},\ldots )$. 
Following the notation of \cite[Section 2.1]{BIK} or \cite{IK} we have $V=\oplus_{i\in S_P} V_i$, where $V_i$ has a decomposition
\begin{equation}\label{Vsumeq}
V_i=\bigoplus_{k=1}^{n_i} V_{i,k}
\end{equation}
into cyclic $B$-modules $V_{i,k}$, each of length $i$. The subspace $V_{i,k}$ has a  cyclic vector that we name $(1,i,k)$ and $V_{i,k}$ has basis 
\begin{equation}\label{Vbasiseq}
 \{(\mathfrak u,i,k)=B^{\mathfrak u-1}(1,i,k):\, 1\le \mathfrak u\le i\}.
\end{equation}
So $V_{i,k}\cong {\sf k}[x]/x^i$ as a ${\sf k}[x]$-module through the action of $B$ \cite[Definition 2.3]{IK}.
 We denote by $\mathfrak B$ the concatenation of the above bases for $V_{i,k}$, and by $\langle A\cdot v\mid (\mathfrak u,i,k)\rangle$ the coefficient of $A\cdot v$ on the basis vector $(\mathfrak u,i,k)$. Fix $i$ and denote by $\mathfrak W_i$ the subset of $\mathcal B$ consisting of the cyclic vectors of $\{V_{i,k}:1\le k\le n_i\}$, that is\footnote{We thank a referee for pointing out that the order needs to be as given to be consistent with ``upper triangular'' in the definition of $\mathcal U_B$. See also Example \ref{Uex}.},
 \begin{equation}
{ \mathfrak W}_i=\{(1,i,n_i),\ldots ,(1,i,1)\}.
 \end{equation}
 Let $W_i$ be the span of $\mathfrak W_i$.   Denote by $\pi_i$ the projection from the centralizer $\mathcal C_B$ to $\Mat_{n_i}(\sf k)$ obtained by restricting $A\in \mathcal C_B$ to $W_i$ and then projecting to $W_i$. Let 
 \begin{equation}
 \pi: \mathcal C_B\to \prod_i \Mat_{n_i}(\sf k)
 \end{equation}
  be the product of the $\pi_i$. We define  a nilpotent subalgebra  $\mathcal U_B\subset \mathcal C_B$ as the set of all $A\in \mathcal C_B$ such that $\pi_i(A)$ is strictly upper triangular on $W_i$ for every $i$:
 \begin{equation}\label{UBeq}
 \mathcal U_B=\{ A\in \mathcal C_B\mid \text{ for every } i\in S_P: 1\le s\le s'\le n_i  \Rightarrow \langle A\cdot (1,i,s')\mid (1,i,s)\rangle=0 \}.
 \end{equation}
 The following proposition is well known (see \cite[Lemma 2.3]{Bas}, \cite[Thm. 3.5.2]{DrKi}, \cite[Theorem 6]{HW},\cite{TA}). Recall that $\mathcal N_B$ is the set of nilpotent elements of $\mathcal C_B$.
 \begin{proposition}\label{ubnbprop} Let $B=J_Q$ where $Q$  is a partition of $n$.  Then
 \begin{enumerate}[a.]
 \item  The map $\pi$ is the projection from $\mathcal C_B$ onto its semisimple part.
 \item $\mathcal U_B\subset \mathcal N_B$ and $\mathcal U_B$ is isomorphic to an affine space.
 \item When $P$ has no repeated parts, then $\mathcal U_B=\mathcal N_B$.
 \end{enumerate}
 \end{proposition}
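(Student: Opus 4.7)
The plan is to identify $\mathcal C_B$ with a standard associative algebra whose semisimple Wedderburn quotient is $\prod_i \Mat_{n_i}({\sf k})$, and then verify that the map $\pi$ constructed from the distinguished basis $\mathfrak B$ agrees with the canonical projection onto that quotient. The classical structure theorem for the centralizer of a single matrix (see \cite{DrKi, Bas, TA}) asserts that $\mathcal C_B$ admits a Wedderburn--Malcev decomposition $\mathcal C_B = \mathcal S \oplus \mathcal J$, where $\mathcal J$ is the Jacobson radical, coinciding with the set of nilpotent elements of $\mathcal C_B$, and $\mathcal S \cong \prod_i \Mat_{n_i}({\sf k})$. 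An element $A\in\mathcal C_B$ is determined by a collection of ${\sf k}[B]$-linear maps $V_{j,\ell}\to V_{i,k}$, and the ``top-degree coefficients'' of these maps are precisely what $\pi$ records by restricting $A$ to $W_i$ and then projecting back to $W_i$. I would first check that this recipe defines a surjective algebra homomorphism onto $\prod_i \Mat_{n_i}({\sf k})$ whose kernel consists of elements that strictly decrease the cyclic degree of basis vectors in $\mathfrak B$, and which is therefore a nilpotent ideal; this yields part~(a).

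Part~(b) then follows from the standard fact that in a finite-dimensional associative algebra an element is nilpotent if and only if its image in the semisimple Wedderburn quotient is nilpotent. For $A\in\mathcal U_B$, definition \eqref{UBeq} says that each $\pi_i(A)$ is strictly triangular with respect to the ordering of the cyclic generators of $W_i$, hence nilpotent in $\Mat_{n_i}({\sf k})$. Consequently $\pi(A)$ is nilpotent in $\mathcal S$, so some power $A^N$ lies in $\ker(\pi)=\mathcal J$; since $\mathcal J$ is itself nilpotent, $A^{NM}=0$ for some $M$, and $A\in\mathcal N_B$.

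For part~(c), when $Q$ has no repeated parts each $n_i\le 1$, so the condition in \eqref{UBeq} with $1\le s\le s'\le n_i=1$ collapses to $\langle A\cdot (1,i,1)\mid (1,i,1)\rangle=0$, i.e.\ $\pi_i(A)=0$; hence $\mathcal U_B=\ker(\pi)$. On the other hand, by (a) an element $A\in\mathcal C_B$ lies in $\mathcal N_B$ if and only if $\pi(A)$ is nilpotent in $\mathcal S\cong \prod_i {\sf k}$, and the only nilpotent scalar is $0$. Therefore $\mathcal N_B=\ker(\pi)=\mathcal U_B$. The main obstacle is not conceptual but organizational: one must check carefully that the restriction-and-projection map $\pi$ defined through the basis $\mathfrak B$ really respects multiplication and agrees with the canonical projection onto the semisimple quotient, at which point parts~(a)--(c) all reduce to applications of the Wedderburn--Malcev theorem together with the definitions.
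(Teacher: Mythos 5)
Your proof is correct and follows the same strategy as the paper: identify $\pi$ with the canonical projection onto the Wedderburn semisimple quotient $\mathcal C_B/\mathcal J\cong\prod_i\Mat_{n_i}(\sf k)$, then read off (b) and (c) from this structural fact. In fact your argument for (b) is more careful than the one-line justification the paper gives, which asserts that $\pi|_{\mathcal U_B}=0$. When some $n_i\ge 2$ that is literally false: the definition \eqref{UBeq} only forces each $\pi_i(A)$ to be \emph{strictly triangular}, not zero, so $\pi(\mathcal U_B)\neq 0$ in general. The correct argument is precisely the one you give — $\pi(A)$ is strictly triangular in each factor, hence nilpotent in $\prod_i\Mat_{n_i}(\sf k)$, so some power $A^N$ lies in $\ker\pi=\mathcal J$, and since $\mathcal J$ is a nilpotent ideal $A$ itself is nilpotent. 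Your treatment of (c) matches the paper's intent: when every $n_i=1$ the condition \eqref{UBeq} really does collapse to $\pi(A)=0$, so $\mathcal U_B=\ker\pi$, and $\mathcal N_B=\ker\pi$ because the only nilpotent element of $\prod_i\Mat_1(\sf k)$ is $0$.
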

 \begin{proof} Part (c) follows evidently from (a) and (b) when each $n_i=1$. The inclusion in part (b) follows from (a) when each $n_i=1$ since then $\pi\mid_{\mathcal U_B}=0$, and in general the inclusion follows from Lemma 2.3 in \cite{Bas}. Since it is an algebra, $\mathcal U_B$ is isomorphic to an affine space.
 \end{proof}
 \begin{definition}[Poset $\mathcal D_P$]\label{posetdef} The poset $\mathcal D_P$ has the set $\mathcal B$ of basis elements of $V$ as its underlying set. For $v,v'\in \mathfrak B$, we set $v<v'$ if there is an element 
 $A\in \mathcal U_B$ such that $\langle A\cdot v\mid v'\rangle\not=0$.
 \end{definition}
The \emph{diagram} Diag($\mathcal L$) of a poset $\mathcal L$ is a directed graph of which the vertices are the elements of $\mathcal L$ and with an arrow $v\to v'$ if $v'$ {covers} $v$ (here $v'$ \emph{covers} $v$ if  $ v< v'$ and there is no $ v''$ such that $ v<v''<v'$).
Recall that $S_P$ is the set of integers that are parts of $P$. For $i\in S_P$ we denote by $i^-$ the next smaller element of $S_P$ if it exists (that is, if $i$ is not the smallest part of $P$), and by $i^+$ the next larger element of $S_P$, if it exists.  For $P=(5,4,4,3,2,2)$ where $  S_P=\{5,4,3,2\}, 4^+=5$ and $3^-=2$.
\begin{definition}[Elementary Maps associated to $P$]\label{DPdef} 
\cite[Def. 2.9]{BIK}.
The maps $\beta_i,\alpha_i, e_{i,k}$ and $w_i$ defined below are zero on the elements of $\mathcal B$ that are not specifically listed. They are called the \emph{elementary maps} associated to $P=(p_1\ge p_2\ge \cdots \ge p_s)$. 
\begin{enumerate}[i.]
\item for $i\in S_P\backslash \{p_s\}$, $\beta_i$ maps the vertex $(\mathfrak u,i,n_i)$ to $(\mathfrak u,i^-,1)$, whenever $ 1\le \mathfrak u\le i^-$.
\item for $i\in S_P\backslash \{p_s\}$, $\alpha_{i}$ maps $(\mathfrak u,i^-,n_{i^-})$ to $ (\mathfrak u+i-i^-,i,1),$ whenever $ 1\le \mathfrak u\le i^-.$
\item  For $i\in S_P$ and $k\in \{1,2,\ldots ,n_i\}$ $e_{i,k}$ maps the vertex $(\mathfrak u,i,k)$ to $(\mathfrak u,i,k+1)$ whenever $ 1\le \mathfrak u\le i,1\le k<n_i$.
\item When $i\in S_P$ is isolated (i.e. when neither $i-1\in S_P$ nor $ i+1\in S_P$),   $w_i$ sends $(\mathfrak u,i,n_i)$ to 
$(\mathfrak u+1,i,1)$ whenever   $1\le \mathfrak u<i$.
\end{enumerate}
\end{definition}
\begin{lemma}\label{DPdefb} 
There is an edge $v\to v'$ in the diagram $\mathrm{Diag}(\mathcal D_P)$ if and only if there exists an elementary map $\gamma$ such that $\gamma (v)=v'$.  Also, the elementary maps generate the algebra $\mathcal U_B$.
\end{lemma}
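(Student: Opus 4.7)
The plan is to prove both directions by characterizing the elements of $\mathcal U_B$ in the basis $\mathfrak B$ and then matching their non-zero matrix entries against the four families listed in Definition \ref{DPdef}. First I would verify that each elementary map $\gamma \in \{\beta_i, \alpha_i, e_{i,k}, w_i\}$ actually lies in $\mathcal U_B$. Commutation $\gamma B = B\gamma$ is essentially built into the definitions: each elementary map sends $(\mathfrak u, i, k)$ to $B^{\mathfrak u - 1}\gamma(1, i, k)$ on the range where it acts non-trivially, and the range constraints on $\mathfrak u$ precisely guarantee that the image remains inside the target cyclic module (so $B$ does not annihilate prematurely at the top of a Jordan block). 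The triangularity condition (\ref{UBeq}) is then checked case by case: $\beta_i, \alpha_i, w_i$ land in $(1,\cdot,1)$-components whose cyclic indices trivially avoid the forbidden $s \le s'$ relation, while $e_{i,k}$ contributes only the strictly lower-diagonal entry at position $(k{+}1, k)$ in the $\pi_i$-block. Hence each elementary $\gamma$ with $\gamma(v)=v'$ witnesses $v < v'$ in $\mathcal D_P$.

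Second, I would describe a general $A \in \mathcal C_B$ using $B$-equivariance: $A$ is determined by its values on the cyclic generators $(1,i,k)$, and each such image may be an arbitrary linear combination of basis elements $(\mathfrak u_0, j, l)$ subject to $\mathfrak u_0 \ge \max(1, j - i + 1)$, the bound forced by $B^i(1,i,k) = 0$. Passing to $\mathcal U_B$ kills those coefficients $c_{1, i, l}$ in $A(1,i,k)$ with $l \le k$. Selecting one non-zero coefficient at a time produces "primitive" elements whose matrix entries $\langle A(\mathfrak u, i, k)\mid (\mathfrak u', j, l)\rangle$ realize, up to post-composition with a power of $B$ (which accounts for the shift parameter $\mathfrak u$), exactly the four patterns of Definition \ref{DPdef}. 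Conversely every nonzero entry of every $A \in \mathcal U_B$ is a sum of such primitive contributions, so every relation $v < v'$ in $\mathcal D_P$ is witnessed by a sequence of primitive transitions.

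Finally, I would verify that each elementary primitive transition is a cover in $\mathcal D_P$ and that no other covering arises. On one hand, any non-elementary primitive — e.g., a single step that skips several parts of $P$, or that jumps in both the part index and the cyclic label simultaneously beyond the minimal amount — factors through an intermediate basis element $v'' \in \mathfrak B$ via two elementary maps in $\mathcal U_B$, which rules out covering. On the other hand, each of $\beta_i, \alpha_i, e_{i,k}, w_i$ changes its respective parameter by the minimal admissible amount (going from part $i$ to its immediate neighbour $i^\pm$ in $S_P$, or incrementing $k$ or $\mathfrak u$ by one), which forbids any interpolating $v''$ connected via $\mathcal U_B$. I expect the technical heart of the lemma to lie in this factorization analysis, especially in the two delicate boundary situations: isolated parts (where $w_i$ is the \emph{only} way to move within $V_i$, since the expected composition $\alpha_{i+1}\beta_{i+1}$ does not exist), and consecutive parts with $i^- = i-1$ (where $\alpha_i, \beta_i$ both act and one must verify that their compositions do not accidentally produce new covers beyond the elementary list).
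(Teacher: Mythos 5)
The paper itself does not prove Lemma~\ref{DPdefb}; it is stated as background and implicitly cited from \cite{Obl1,BIK,IK,Kh1,Kh2}, so there is no internal proof to compare your argument against. The overall structure you propose — check the elementary maps lie in $\mathcal U_B$, describe $\mathcal U_B$ via $B$-equivariance, then analyze covers — is a plausible strategy, and the $B$-equivariance bound $\mathfrak u_0\ge\max(1,\,j-i+1)$ for $A(1,i,k)$ is correct. But the proposal contains both a substantive imprecision and a genuine gap.

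The imprecision: in step~2 you claim that isolating one nonzero coefficient of $A(1,i,k)$ at a time yields primitives that ``realize, up to post-composition with a power of $B$, exactly the four patterns of Definition~\ref{DPdef}.'' This is false as stated. A primitive sending $(1,i,k)\mapsto(\mathfrak u_0,j,l)$ with $j$ not an immediate neighbour of $i$ in $S_P$, or with both $j\neq i$ and $l\neq k$, matches none of the four patterns even after composing with powers of $B$, since $B$ only shifts $\mathfrak u$ and never changes $j$ or $l$. What is true — and what step~3 quietly assumes — is that every such primitive is a nonzero scalar multiple of a composition of two or more elementary maps. The gap: you explicitly defer the ``factorization analysis'' to step~3 and call it ``the technical heart of the lemma,'' then leave it undone. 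But that analysis \emph{is} the lemma. One must check, for each admissible target $(\mathfrak u_0,j,l)$ of a primitive rooted at $(1,i,k)$, that it factors through elementary maps, and that no elementary map $\gamma$ itself admits a length-$\geq 2$ factorization in $\mathcal U_B$ (so that $\gamma(v)=v'$ really is a cover). The latter requires separate handling of the $e_{i,k}$ maps, which stay inside a single $V_i$ and carry zero $sl_2$-weight — so a weight-based argument for distinctness of intermediates fails along chains of $e$-maps and must be replaced by an argument on the cyclic index — as well as the boundary cases you flag (isolated $i$ where $w_i$ alone moves within $V_i$; adjacent $i,i-1\in S_P$ where $\alpha_i$ and $\beta_i$ compose to substitute for $w_i$). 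Without executing that case analysis, the proposal is a statement of strategy rather than a proof.
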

\begin{proof}  That the elementary maps generate $\mathcal U_B$ is \cite[Corollary 2.12]{BIK}. This implies that there is an edge $v\to v'$ only if there is an elementary map $\gamma$ such that $\gamma (v)=v'$. Conversely, it is an easy case by case check that $v'$ covers $v$ when there is an elementary map $\gamma$ such that $\gamma (v)=v'$. \end{proof}
\Large
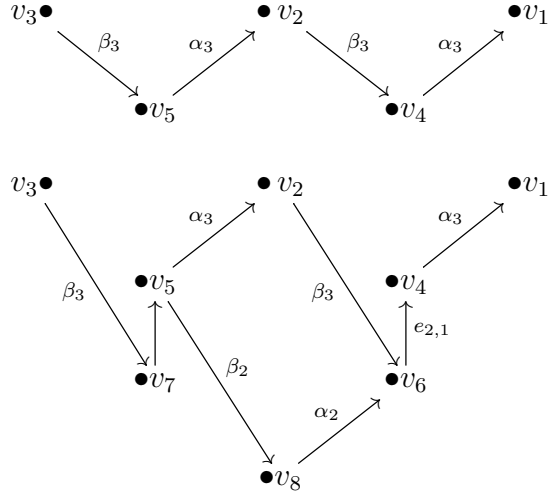
\begin{figure}[hbt]
\begin{equation*}
\begin{split}
\xymatrix{
{v_3\bullet}\ar[dr]^{\beta_3}&&{\bullet\,v_2}\ar[dr]^{\beta_3}&&{\bullet}v_1\\
&{\bullet}v_5\ar[ru]^{\alpha_3}&&{\bullet}v_4\ar[ru]^{\alpha_3}\\
}
\end{split}
\end{equation*}
\begin{equation*}
\begin{split}
\xymatrix{
{v_3\bullet}\ar[ddr]_{\beta_3}&&{\bullet\,v_2}\ar[ddr]_{\beta_3}&&{\bullet}v_1\\
&{\bullet}v_5\ar[ru]^{\alpha_3}\ar[rdd]^{\beta_2}&&{\bullet}v_4\ar[ru]^{\alpha_3}\\
&{\bullet}v_7\ar[u]&&{\bullet}v_6\ar[u]_{e_{2,1}}\\
&&{\bullet}v_8\ar[ru]^{\alpha_2}&&\\
}
\end{split}
\end{equation*}
\caption{$\mathrm{Diag}(\mathcal D_P)$ for $P=(3,2)$ and $   P=(3,2,2,1)$.}\label{smallDfig}
\end{figure}
\normalsize
\noindent
\begin{example} When $P=(3,2)$ and $B=J_P$ then the algebra $\mathcal U_B$ is generated by 
$\alpha_3,$ and $ \beta_3$, subject to the relations ${\alpha_3}^2={\beta_3}^2=(\beta_3\alpha_3)^2=0$. For $P'=(3,2,2,1)$ and $B'=J_{P'}$
the algebra  $\mathcal U_{B'}$ is generated by $\alpha_3,\alpha_2, \beta_3,\beta_2$ and $e_{2,1}$ (Figure \ref{smallDfig}).
\end{example}
\begin{definition}[Rows of $\mathcal D_P$]
A \emph{row of length $i$} of $\mathcal D_P$ is a subset of the form $\{(\mathfrak u,i,k)\in \mathcal D_P\mid 1\le \mathfrak u\le i\}$, where $i\in S_P$ and $k$ satisfying $1\le k\le n_i$ is fixed.
\end{definition}
  \begin{definition}\label{Udef} Let $a\in S_P$. The \emph{U-chain} $C_a$ of the poset $\mathcal D_P$ is comprised of three parts: \par
 i. the unique maximal chain through all  the vertices of $\mathcal D_P$ in rows of lengths $a$ and $ a-1$;\par
 ii. the maximal chain from the source vertex $(1,p_1,1)$ down to $(1,a,1)$;\par
 iii. the maximal chain from the vertex $(a,a,n_a)$ to the sink vertex $(p_1,p_1,n_{p_1})$ of $\mathcal D_P$.
 \end{definition}
By definition, the length $|C_a|$ is the number of vertices in the U-chain. It satisfies $|C_a|=an_a+(a-1)n_{a-1}+2\sum_{i>a} n_i$ (this is equation \eqref{Caeq}).
\begin{example}\label{Uex}
For the partition $P=(3,2,2,1)$ of Figure \ref{smallDfig}, the U-chain $C_2$ is\footnote{In writing a matrix for $A$ in the basis $\mathfrak B$ we number the basis vectors $v_1,\ldots ,v_n$ according to decreasing $i$, then decreasing $k$, then decreasing $\mathfrak u$, as in Figure \ref{smallDfig}; see also
 \cite{BI,BIK,IK}.} 
\begin{align*}
&v_3\to v_7\to v_5\to v_8\to v_6\to v_4\to v_1 \text{ or, in triples notation, }\\
&(1,3,1)\to (1,2,1)\to (1,2,2)\to (1,1,1)\to (2,2,1)\to (2,2,2)\to (3,3,1),
\end{align*}
 given by the chain of maps (right to left)   $\alpha_3\circ e_{21}\circ \alpha_2\circ \beta_2\circ e_{21}\circ \beta_3$. The U-chain $C_3$ is
 \begin{equation*}
(1,3,1)\to (1,2,1)\to (1,2,2)\to (2,3,1)\to (2,2,1)\to (2,2,2)\to (3,3,1)
\end{equation*}
given by  $\alpha_3\circ e_{21}\circ \beta_3\circ \alpha_3\circ e_{21}\circ \beta_3$. \par
\end{example}

\section{The table $\mathcal T(Q)$  for $\mathfrak Q^{-1}(Q)$ when $ Q=(u,u-r)$.}\label{rp=2app}
In this section we determine the tables $\mathcal T(Q)$ giving the complete set $\mathfrak Q^{-1}(Q)$ for all stable partitions $Q$ having two parts:  $Q=(u,u-r)$ with $u>r\ge 2$. Our main results, Theorem~\ref{tablethm}  specifying the table $\mathcal T(Q)\subset \mathfrak Q^{-1}(Q)$,   and Theorem \ref{table2thm} asserting completeness of the table are proved in Sections \ref{tablesec} and \ref{tablecompletenesssec}, respectively.

\subsection{Three subsets of $\mathfrak Q^{-1}(Q)$ and their intersections.}\label{threesubsetsec}

By Theorem \ref{rpthm}, $Q=\mathfrak Q(P)$ has two parts exactly when $P$ is the union of two almost rectangular  partitions, but $P$ is not almost rectangular. Hence there exist $a,b\in \mathbb N$ with $a-b\ge 2$ such that
\begin{equation}\label{Pabeq}P=\left(a^{n_a},(a-1)^{n_{a-1}}, b^{n_b}, (b-1)^{n_{b-1}}\right) \text {with $n_a>0$ and $n_b>0$}.
\end{equation} 
We can and will assume that $n_{b-1}=0$ if $b=1$.
As before, we have denoted by $n_i$ the number of parts of $P$ having length $i$.
\begin{definition}[Type A,B,C partitions in $\mathfrak Q^{-1}(Q)$]\label{casesdef}
Let $Q=(u,u-r)$ with $u,r\in \mathbb N, u>r\ge 2$ and let $P\in \mathfrak Q^{-1}(Q)$ satisfy \eqref{Pabeq}.\par
We say that $P$ is of \emph{type A} if $u=a\cdot n_a+(a-1)n_{a-1}$;\par
We say that $P$ is of \emph{ type B} if $u=2n_{a}+2n_{a-1}+bn_b+(b-1)n_{b-1}$, or if $b=a-2, n_{b-1}=0 $ and $u=2n_a+(a-1)n_{a-1}+bn_b$. \par
We say that $P$ is of \emph{type C} if $b=a-2$, if each of $n_a,n_{a-1},n_b,n_{b-1}$ is non-zero,  and $u=2n_{a}+(a-1)n_{a-1}+bn_b$. 
\end{definition}
\begin{remark} It is clear from Theorem \ref{indexthm} that every $P\in \mathfrak Q^{-1}((u,u-r))$ is of type A,B, or C. Note that a partition can have more than one type (Lemma \ref{Clem} and Remark \ref{tablerem}).
When $P$ has type A then the length of the U-chain $C_a$ through the upper almost rectangular subpartition of $P$ is the largest part of $Q$, and $u-r=bn_b+(b-1)n_{b-1}$.
When $P$ has type~B the length of the U-chain through the lower almost rectangular subpartition of $P$ is the largest part of $Q$ (when both $b=a-2$ and $n_{b-1}=0$ this almost rectangular lower subpartition is $((a-1)^{n_{a-1}},b^{n_b})$).
Then $u-r=(a-2)n_a$ if $b=a-2$ and $n_{b-1}=0$, and $u-r=(a-2)n_{a}+(a-3)n_{a-1}$ otherwise.
When $P$ has type C the middle almost rectangular U-chain $C_{a-1}$ is a longest U-chain. Then $u-r=(a-2)n_{a}+(b-1)n_{b-1}$.
\end{remark}
\begin{example} The partition $P=(5,4,4,3,3,2)$ is of type C since the middle U-chain of length $|C_4|=16$  is longest, as $|C_5|=13$ and $ |C_3|=14$.  The partition  $P=(5,5,4,3,2)$ is of type A: the longest U-chain is $C_5$. The partition
$P=(5,4,3,2,2,2)$ if of type B: the longest U-chain is the bottom chain $C_3$. 
\end{example}
We characterize below in Lemma \ref{Clem} partitions $P$ of type C that are not of type A or B; we use this result later in the proof of Lemma \ref{Ccompletelem} and Theorem~\ref{table2thm}.
The following is a consequence of equation \eqref{Caeq}.
\begin{lemma}\label{countlem} Let $P$ be a partition as in equation \eqref{Pabeq}.
The length of the top U-chain $U_{top}=C_a$ of $\mathcal D_P$ is
\begin{equation}\label{ctopeq}
|U_{top}|=n_a\cdot a+n_{a-1}\cdot (a-1),
\end{equation}
while the length of the bottom U-chain $U_{bottom}=C_b$ is
\begin{equation}\label{cboteq}
|U_{bottom}|=n_b\cdot b+n_{b-1}\cdot (b-1)+2(n_a+n_{a-1}).
\end{equation}
We have
\begin{equation}
|U_{top}|-|U_{bottom}|=n_a\cdot (a-2)+n_{a-1}\cdot (a-3)-n_b\cdot b-n_{b-1}\cdot (b-1).
\end{equation}
If $b=a-2$ and $n_{a-1}>0$, then the length of the middle U-chain $U_{middle}=C_{a-1}$ is
\begin{equation}\label{cmideq}
|U_{middle}|=n_{a-1}\cdot (a-1)+n_{a-2}\cdot (a-2)+2n_a,
\end{equation}
and we have
\begin{align}
|U_{middle}|-|U_{top}|&=(n_{a-2}-n_a)\cdot (a-2),\notag\\
|U_{middle}|-|U_{bottom}|&=(n_{a-1}-n_{a-3})\cdot (a-3).
\end{align}
Consequently, $P$ is of type C and not of type A or B if and only if $b=a-2$ and both
\begin{equation}
n_{a-1}>n_{a-3}>0, \, \text { and } \, n_{a-2}>n_a.
\end{equation}
\end{lemma}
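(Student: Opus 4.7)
My plan is to apply the length formula \eqref{Caeq} three times---once to $C_a$, once to $C_b$, and, when $b=a-2$ and $n_{a-1}>0$, once to $C_{a-1}$---and then read off the final equivalence by combining the resulting difference formulas with Oblak's index Theorem~\ref{indexthm} and Definition~\ref{casesdef} of the three types.

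First I would verify the three length formulas by direct substitution into \eqref{Caeq}. For $|U_{top}|=|C_a|$ the sum $\sum_{i>a}n_i$ is empty since $a$ is the largest part of $P$, so \eqref{ctopeq} drops out. For $|U_{bottom}|=|C_b|$ the parts strictly greater than $b$ are precisely the $n_a$ copies of $a$ and the $n_{a-1}$ copies of $a-1$, contributing the $2(n_a+n_{a-1})$ term in \eqref{cboteq}. Under $b=a-2$ and $n_{a-1}>0$ the parts strictly greater than $a-1$ are the $n_a$ copies of $a$, and the ``$(a-1)$''-multiplicity appearing in \eqref{Caeq} applied to $C_{a-1}$ is played by $n_{a-2}=n_b$; this yields \eqref{cmideq}. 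The three stated difference formulas then follow by subtraction, the cancellations reducing $|U_{middle}|-|U_{top}|$ to $(a-2)(n_{a-2}-n_a)$ and $|U_{middle}|-|U_{bottom}|$ to $(a-3)(n_{a-1}-n_{a-3})$.

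For the last assertion, I would use that a type C partition has $n_a,n_{a-1},n_{a-2},n_{a-3}$ all positive by Definition~\ref{casesdef}, which forces $a\ge 4$, so the coefficients $a-2$ and $a-3$ in the difference formulas are strictly positive. By Theorem~\ref{indexthm} the largest part $u$ of $Q=\mathfrak Q(P)$ equals $\max(|U_{top}|,|U_{middle}|,|U_{bottom}|)$, so the defining equalities in Definition~\ref{casesdef} say that $P$ is of type A, B, or C precisely when $|U_{top}|$, $|U_{bottom}|$, or $|U_{middle}|$ respectively realizes this maximum; the alternative clause of type B involving $n_{b-1}=0$ is vacuous here because type C forces $n_{a-3}>0$. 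Hence ``type C and not type A or B'' is equivalent to $|U_{middle}|$ strictly dominating both $|U_{top}|$ and $|U_{bottom}|$, which by the difference formulas reads off as $n_{a-2}>n_a$ and $n_{a-1}>n_{a-3}$. Combined with $n_{a-3}>0$ from the definition of type C, this is exactly the stated condition.

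The proof is mostly bookkeeping; the one step that requires care is the third, where one must verify that ``not type A or B'' really translates to strict inequalities on the U-chain lengths (rather than just $\ge$) and that the alternative $n_{b-1}=0$ clause of type B does not interfere. Once this is in place, the equivalence reads off immediately from the signs in the difference formulas.
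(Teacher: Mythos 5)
Your proof is correct and uses the same mechanism the paper has in mind: the paper gives no proof of this lemma at all, merely remarking that it ``is a consequence of Equation~\eqref{Caeq},'' and your argument supplies exactly the straightforward substitution into \eqref{Caeq} and comparison via Theorem~\ref{indexthm} that this remark anticipates. Your reading of \eqref{Caeq} for each of $C_a$, $C_b$, $C_{a-1}$ is right (the sums $\sum_{i>a}n_i$ are empty, $\{n_a,n_{a-1}\}$, and $\{n_a\}$ respectively, and in $C_{a-1}$ the second multiplicity is $n_{a-2}=n_b$), the difference formulas are elementary arithmetic, and the reduction of ``type C, not A, not B'' to strict inequalities is handled correctly: you identify that $n_{a-3}=n_{b-1}>0$ forces $a\ge 4$ (so both $a-2$ and $a-3$ are positive), and you observe that the alternative $n_{b-1}=0$ clause of type~B cannot apply to a type~C partition. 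One small point you pass over silently: Theorem~\ref{indexthm} maximizes over \emph{all} U-chains $C_i$, $i\in S_P$, not just the three labeled ones, so to conclude $u=|U_{middle}|$ in the converse direction one should note that $|C_{b-1}|\le |C_b|=|U_{bottom}|$ always (since $|C_b|-|C_{b-1}|=(b-2)n_b\ge 0$ for $b\ge 2$), so the extra chain $C_{a-3}$ never competes. This is implicit in Remark~3.3 of the paper (``every $P\in\mathfrak Q^{-1}(Q)$ is of type A, B, or C''), and it does not affect the correctness of your argument, but a fully self-contained proof would include it.
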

\subsubsection{Classification of type C partitions.}
\begin{definition}\label{Cdef} Given the sequence  $C=({\sf c}_1,{\sf c}_2,s_1,s_2;a)$ of non-negative integers satisfying 
\begin{equation}\label{Ceq}
{\sf c}_1,{\sf c}_2\ge 1, a\ge 4,
\end{equation} we denote by $P_C$ the partition 
\begin{equation}\label{PCeq}
P_C=(a^{{\sf c}_1}, (a-1)^{{\sf c}_2+s_2}, (a-2)^{{\sf c}_1+s_1}, (a-3)^{{\sf c}_2}).
\end{equation}
\end{definition}\noindent
In the notation of Definition \ref{ARdef} we have
\begin{equation}\label{PC2eq}
P_C= \left( [({\sf c}_1+{\sf c}_2+s_2)a-({\sf c}_2+s_2)]^{{\sf c}_1+{\sf c}_2+s_2},[({\sf c}_1+{\sf c}_2+s_1)(a-2)-{\sf c}_2]^{{\sf c}_1+\sf{c}_2+s_1} \right),
\end{equation}
and $P_C$ is a partition of
\begin{align}
n&=a\cdot {\sf c}_1+(a-1)({\sf c}_2+s_2)+(a-2)({\sf c}_1+s_1)+(a-3){\sf c}_2\notag \\
&=(2a-2)\cdot {\sf c}_1+(2a-4)\cdot {\sf c}_2+(a-1)\cdot s_2+(a-2)\cdot s_1.\label{nCeq}
\end{align}
The number of parts $t(P_C)$ of $P_C$ satisfies $t(P_C)=2{\sf c}_1+2{\sf c}_2+s_1+s_2$.\par
The following Lemma is a direct consequence of Lemma \ref{countlem} and equation \eqref{PCeq}.
\begin{lemma}\label{Clem}
Let the sequence $C= ({\sf c}_1,{\sf c}_2,s_1,s_2;a)$ satisfy \eqref{Ceq}. Then $P_C$ is a type \emph{C} partition and
\begin{equation}\label{QPCeq}
\mathfrak Q(P_C)=\left(({\sf c}_2+s_2)(a-1)+({\sf c}_1+s_1)\cdot(a-2)+2{\sf c}_1,{\sf c}_1\cdot (a-2)+{\sf c}_2\cdot (a-3)\right).
\end{equation}
In other words, $P_C\in \mathfrak Q^{-1}((u,u-r))$ where
\begin{align}\label{PCeqn}
&u=({\sf c}_2+s_2)(a-1)+({\sf c}_1+s_1)\cdot(a-2)+2{\sf c}_1 \text { and }\notag\\
&r=s_1\cdot (a-2)+s_2\cdot (a-1)+2({\sf c}_1+{\sf c}_2).
\end{align}
Moreover, if $P$ is a partition of type C, then $P=P_C$ for some sequence $C=({\sf c}_1,{\sf c}_2,s_1,s_2;a)$ satisfying \eqref{Ceq}.
Here $P_C$ is also of type A if and only if $s_1=0$; and $P_C$ is also of type B if and only if $s_2=0$.
 The set
 \begin{equation}\label{completeCeq}
 \{P_C: C=({\sf c}_1,{\sf c}_2,s_1,s_2;a) \text { satisfying \eqref{Ceq} with $s_1\ge 1$ and $s_2\ge 1$}\}
 \end{equation}
  is the complete set of partitions that are of type C but not of type A or B.
\end{lemma}\noindent
The table $\mathcal T(Q), Q=(12,3)$ is the first to contain a type C partition $P_C, C=(1,1,1,1;4)$ that is not of type A or B; see also Example \ref{27,3ex} below where $Q=(27,3)$.
\begin{cor}\label{CCor} If $u\ge r+r^2/8$ then every partition in $\mathfrak Q^{-1}((u,u-r))$ has type A or has type~B.
\end{cor}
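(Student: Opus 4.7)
My plan is to prove the contrapositive: if some $P \in \mathfrak{Q}^{-1}((u, u-r))$ is not of type A or type B, then $u \leq r + r^2/8$. The remark following Definition \ref{casesdef} ensures every $P \in \mathfrak{Q}^{-1}((u,u-r))$ is of type A, B, or C, so such a $P$ must be of type C but not of type A or B. Lemma \ref{Clem} then supplies an explicit parametrization: $P = P_C$ for some $C = ({\sf c}_1, {\sf c}_2, s_1, s_2; a)$ with ${\sf c}_1, {\sf c}_2, s_1, s_2 \geq 1$ and $a \geq 4$.

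With this parametrization in hand, I would first derive a clean formula for $u - r$ from the expressions \eqref{PCeqn} of Lemma \ref{Clem}. Direct cancellation of the $s_1$ and $s_2$ terms yields
\[
u - r = {\sf c}_1(a-2) + {\sf c}_2(a-3) \leq ({\sf c}_1 + {\sf c}_2)(a-2),
\]
while the constraints $s_1, s_2 \geq 1$ immediately give the lower bound
\[
r \geq (a-2) + (a-1) + 2({\sf c}_1 + {\sf c}_2) = (2a-3) + 2({\sf c}_1 + {\sf c}_2).
\]
Setting $m = {\sf c}_1 + {\sf c}_2$ and $y = a-2$, it then suffices to verify the elementary inequality $8my \leq (2y+1+2m)^2$. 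Expanding the right-hand side gives $4y^2 + 4y + 1 + 8my + 4m + 4m^2$, so the inequality reduces to $0 \leq 4y^2 + 4y + 1 + 4m + 4m^2$, which is manifest. Chaining these bounds yields $8(u-r) \leq r^2$, i.e., $u \leq r + r^2/8$, contradicting the hypothesis.

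There is no serious obstacle: once the explicit formula for $u-r$ is in place, the lower bound on $r$ is immediate from $s_1, s_2 \geq 1$, and the final inequality is a one-line algebraic check. The only point requiring a small amount of care is the cancellation leading to $u - r = {\sf c}_1(a-2) + {\sf c}_2(a-3)$; one must verify that the terms containing $s_1$ and $s_2$ appear identically in $u$ and $r$ and hence drop out. Observe also that the argument uses only the coarse bound $(a-3) \leq (a-2)$, so the inequality $u \leq r + r^2/8$ is strict — which is consistent with the strict hypothesis $u > r + r^2/8$ in the corollary.
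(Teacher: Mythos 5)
Your proof is correct and is precisely the "straightforward calculation based on Lemma \ref{Clem}" that the paper invokes without further detail: the derivation $u-r={\sf c}_1(a-2)+{\sf c}_2(a-3)$, the lower bound $r\geq (2a-3)+2({\sf c}_1+{\sf c}_2)$, and the elementary inequality $8my\leq (2y+1+2m)^2$ all check out. A minor quibble on phrasing only: you write that "the inequality $u\leq r+r^2/8$ is strict," which is slightly garbled — what you mean (and what is true, since ${\sf c}_2\geq 1$ makes $u-r<my$ strict) is that the conclusion can in fact be sharpened to $u<r+r^2/8$, so the corollary holds even under the weaker hypothesis $u\geq r+r^2/8$.
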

\begin{proof} Let $C$ and $P_C$ be as in equations \eqref{Ceq} and \eqref{PCeq}. By Lemma \ref{Clem},  both $s_1>0,s_2>0$ is needed for $P_C$ to not be of type A or B. Then 
\begin{equation*}r^2/8>\frac{s_1+s_2}{2}\cdot ({\sf c}_1+{\sf c}_2)(a-2)\ge ({\sf c}_1+{\sf c}_2)(a-2)>u-r=({\sf c}_1+{\sf c}_2)(a-2)-{\sf c}_2,
\end{equation*} 
where the first inequality follows from \eqref{PCeqn} and the last one uses that $c_2>0$.
\end{proof}
\begin{remark}\label{Crem} Although we will not use it, we note the following connection between type C partitions in the sets $\mathfrak Q^{-1}(Q)$ and $\mathfrak  Q^{-1}(Q')$ where $Q$ and $Q'$ are certain stable partitions of distinct integers $n$ and $n'$.
The formulas \eqref{PCeq} for $P_C$, \eqref{nCeq} for $n=|P_C|$, and \eqref{QPCeq} for $\mathfrak Q(P_C)$ are linear in the multiplicities $({\sf c}_1,{\sf c}_2,s_1,s_2)$. Also, increasing $a$ by $1$ to form $P'=P_{C'}=P_C+\underline{1}$ increases each part of $P_C$ by 1, so $|P_C'|= |P_C|+t(P_C)$, while the multiplicities stay the same. The same increase of $a$ by $1$ increases
$\mathfrak Q(P_C)=(u,u-r)$ by $\Delta Q=({\sf c}_1+{\sf c}_2+s_1+s_2, {\sf c}_1+{\sf c}_2)$ to form $Q+\Delta Q=Q'=(u',u'-r')$. It increases $r$ by $(s_1+s_2)$ and what we will call the \emph{key} ${\sf S}_{Q}=(r-1,u-r)$ of $Q$ to ${\sf S}_{Q'}={\sf S}_Q+\left(s_1+s_2,{\sf c}_1+{\sf c}_2\right)$
(see Definition \ref{keydef}). By setting $a=4$ we find the most ``basic'' partition $P_{C_0},C_0=({\sf c}_1,{\sf c}_2,s_1,s_2;4)$ of type C having given multiplicities $({\sf c}_1,{\sf c}_2,s_1,s_2)$. We have that $|P_C|=|P_{C_0}|+t(a-4)$, where $t=t(P_C)$.\par
 Finally, we note that it follows from \eqref{PCeqn} that the number of parts of $P_C$ satisfies
\begin{equation}\label{partsboundCeq}
t(P_C)\le \min \{ 2u/3, r\},
\end{equation}
for all type C partitions from Definition \ref{Cdef}.
We refer to this in Remark \ref{tablerem}.
\end{remark}

\subsection{The table $\mathcal T(Q)$ for $Q=(u,u-r)$.}\label{tablesec}
In this section we prove Theorem \ref{tablethm} which describes the $(r-1)\times (u-r)$ table $\mathcal T(Q)$ of elements in $\mathfrak Q^{-1}(Q)$.
\begin{definition}[Table invariants]\label{tablenotationdef} Let $Q=(u, u-r)$ with $u>r\geq 2$.  
If $r\ge 3$ and $1\leq t\le \min\{u-r, \lfloor\frac{r-1}{2}\rfloor\}$, define
$$\begin{array}{ll}
q_t&=\lceil\frac{u-r}{t}\rceil\\ \\
d_t&=t\cdot q_t-(u-r).
\end{array}$$
\medskip
We set $k_{0}=1$, and if $r\geq 3$ then for $1\leq t\le \min\{u-r, \lfloor\frac{r-1}{2}\rfloor\}$ define

\begin{equation}\label{kteq}\begin{array}{ll}
k_t&=\lceil \frac{u+1-t+d_t}{q_t+1} \rceil, \mbox{ and}\\ \\
c_t&=\left\{
\begin{array}{lll}
k_t&&\mbox{ if } d_t=0, \\ \\ 
\lceil \frac{u-2t+d_t}{q_t}\rceil&&\mbox{ if } d_t>0.
\end{array}\right.
\end{array}
\end{equation}
\end{definition}
\begin{remark}\label{3.10rem}
The invariants $k_t$ give the rows of the table $\mathcal T(Q)$ that start with a partition of type B or C, and the invariants $c_t$ determine the number of partitions of type C in the corresponding hook,  see Theorem \ref{tablethm}.\footnote{We recognize that the values of $k_t$ and $c_t$ are somewhat mysterious: they have to do with shifts in the ranks of certain powers of $P_{k,\ell}$ as the index $(k,\ell)$ changes.  We believe that a subsequent planned paper on equations of loci $\mathfrak Z_{P_{k,\ell}}$ (see equation \eqref{locidefeqn}) will shed some light.}\par
Since $0\leq \lceil\frac{u-r}{t}\rceil-\frac{u-r}{t}<1$, we have $0\leq d_t<t$.
Note that by  equation \eqref{nkeq}
$q_t$ and $d_t$ are defined in such a way that 
\begin{equation}\label{qtdteq}
[u-r]^{t}=((q_t)^{t-d_t}, (q_t-1)^{d_t}).
\end{equation}
Using the definition of $d_t=t\cdot q_t-(u-r)$, we can also write 
\begin{equation}\label{kalternative}
\begin{array}{ll}
k_t&=t+\lceil \frac{r-2t+1}{q_t+1} \rceil, \mbox{ and } \\ 
c_t&=\left\{
\begin{array}{lll}
k_t&&\mbox{ if } d_t=0, \\ \\ 
t+\lceil \frac{r-2t}{q_t}\rceil&&\mbox{ if } d_t>0.
\end{array}\right.

\end{array}
\end{equation}
\end{remark}
Example \ref{27,3ex} below shows these invariants for $Q=(27,3)$.
The following lemma gives some of the basic properties of the invariants of Definition \ref{tablenotationdef}. 

\begin{lemma}[Relations among the table invariants]\label{tableinvariantslem}
Assume that $Q=(u,u-r)$ with $u>r\geq 3$ and let $t_{\max}=\min\{u-r, \lfloor\frac{r-1}{2}\rfloor\}$. Then 

\begin{itemize}
\item[(a)] \qquad $k_{t_{\max}}=\left\{
\begin{array}{lll}
\frac{r}{2}&& \mbox{if }u-r> \lfloor\frac{r-1}{2}\rfloor \mbox{ and } r\mbox{ is even}; \\ \\ 
\lceil\frac{r+1}{2}\rceil&&\mbox{otherwise. }
\end{array} 
\right.$

\item[(b)] The sequence $\{k_1,k_2,\ldots ,k_{t_\max}\}$ is a nondecreasing sequence of positive integers satisfying $2\le k_t\le \lceil\frac{r+1}{2}\rceil$ for all $t\in\{1,2,\ldots , t_{\max}\}$.

\item[(c)] For all $t\in\{1,\ldots , t_{\max}\}$, $k_t\leq c_t \leq r-t.$ 
\end{itemize}
\end{lemma}

\begin{proof}
We begin with part (a).
First assume that $u-r\leq \lfloor \frac{r-1}{2}\rfloor$. Then $t_{\max}=u-r$. So $q_{t_{\max}}=1$, $d_{t_{\max}}=0$, and Definition \ref{tablenotationdef} gives $k_{t_{\max}}=\lceil\frac{r+1}{2}\rceil$.
\\

Now assume that $u-r> \lfloor \frac{r-1}{2}\rfloor$. Then $t_{\max}=\lfloor\frac{r-1}{2}\rfloor$ and by formula (\ref{kalternative}) we have 
$$k_{t_{\max}}=\Big\lfloor \frac{r-1}{2}\Big\rfloor+\Big\lceil\frac{r-2\left(\big\lfloor \frac{r-1}{2}\big\rfloor\right)+1}{q_{t_{\max}}+1}\Big\rceil.$$

Since $\frac{r-1}{2}-1<\lfloor\frac{r-1}{2}\rfloor\leq \frac{r-1}{2}$, we get $2\leq r-2(\lfloor \frac{r-1}{2}\rfloor)+1<4$. On the other hand, since by assumption $\lfloor \frac{r-1}{2}\rfloor <u-r$, we have $q_{t_{\max}}=\lceil\frac{u-r}{t_{\max}}\rceil \ge 2$.
Thus $\lceil\frac{r-2(\lfloor \frac{r-1}{2}\rfloor)+1}{q_{t_{\max}}+1}\rceil=1$ and therefore  by formula \eqref{kalternative} we obtain that
 \begin{equation}\label{tMaxcase2eq}
k_{t_{\max}}=\Big\lfloor \frac{r-1}{2}\Big\rfloor+1.
\end{equation}
To complete the proof of (a) we note that $\lfloor\frac{r-1}{2}\rfloor$ is equal to $\frac{r}{2}-1$ if $r$ is even, and is equal to $\frac{r-1}{2}$ if $r$ is odd.
\par
\noindent
We move to part (b).  By Definition \ref{tablenotationdef}, $q_1=u-r$ and $k_1=\lceil\frac{u}{u-r-1}\rceil$. So $k_1\geq 2$. By equation \eqref{kalternative}, for  $1< t \leq t_{\max}$ we have
$$\begin{array}{llll}
k_{t-1}&=t-1+\big\lceil \frac{r-2(t-1)+1}{q_{t-1}+1}\big\rceil && \\ \\
&\leq t-1+\big\lceil \frac{r-2t+1+1+q_t}{q_{t}+1} \big\rceil &&\mbox{ since } 1\leq q_t \leq q_{t-1}\\ \\
&=t+\big\lceil \frac{r-2t+1}{q_{t}+1}\big\rceil &&\\ \\
&=k_{t}.
\end{array}$$
\\ 
To complete the proof of part (b), it is enough to use part (a) to obtain that $k_{t_{\max}}\leq \lceil \frac{r+1}{2}\rceil$. 
\par
\noindent
Finally we prove part (c). Let $t\in\{1,2,\ldots , t_{\max}\}$. Using formula (\ref{kalternative}) and the definition of $c_t$, to show that $k_t\leq c_t$ it is enough to show that $\lceil \frac{r-2t+1}{q_t+1} \rceil \leq \lceil \frac{r-2t}{q_t} \rceil$. If $q_t\leq r-2t$, then $\frac{r-2t+1}{q_t+1}\leq \frac{r-2t}{q_t}$, and therefore the desired inequality holds. On the other hand, if $q_t > r-2t$, then both $\frac{r-2t}{q_t}$ and $\frac{r-2t+1}{q_t+1}$ are between 0 and 1, and therefore $\lceil \frac{r-2t+1}{q_t+1} \rceil =\lceil \frac{r-2t}{q_t} \rceil=1$. To complete the proof of part (c), using the inequality $\lceil \frac{r-2t+1}{q_t+1} \rceil \leq \lceil \frac{r-2t}{q_t} \rceil$, formula (\ref{kalternative}), and the definition of $c_t$, it is enough to show that $t+\lceil\frac{r-2t}{q_t}\rceil \leq r-t$ holds for all $t\in\{1,2,\ldots , t_{\max}\}$. This is obvious because by Definition \ref{tablenotationdef} the integer $q_t \geq 1$ and therefore  $\lceil\frac{r-2t}{q_t}\rceil \leq r-2t$. 
\end{proof}

The following is the first part of our main result. The reader may wish to read Corollary~\ref{hookcor} and Remark \ref{tablerem} along with the Theorem, to gain some intuition about the result. In particular, note that we privilege type A over type B over type C as  labels of partitions in $\mathfrak Q^{-1}(Q)$: thus, a set of indices $A_t$ correspond to type A partitions of the table $\mathcal T(Q)$ (Definition \ref{tabledef}) that may also be of type B or C;  a set $B_t$ corresponds to partitions of type B, but not of type A and a set $C_t$ corresponds to partitions in the table that are of type C but that are not of type A or B. To a certain extent, this favoring of type A over B over C is an arbitrary choice, but the choice does make the decomposition of the table more regular, in our view.
\begin{theorem}[Table Theorem, part I]\label{tablethm} Let $Q=(u, u-r)$ with $u>r\geq 2$. 
\begin{itemize}

\item[(a)] For every positive integer $t$ such that $1\le t\le \min\{u-r,\lfloor \frac{r-1}{2}\rfloor\}=t_{\max}$,  define the set 
\begin{equation*}
A_t=\{(k,\ell )\in \mathbb{N}\times\mathbb{N}\,|\, k_{t-1} \le k<k_{t}\mbox { and }t\le \ell\le u-r\}.
\end{equation*}

For $T=\min\{u-r,\lfloor \frac{r-1}{2}\rfloor\}+1=t_{\max}+1$ define the set 
\begin{equation*}
A_T=\{(k,\ell)\in \mathbb{N}\times\mathbb{N}\,|\, k=k_{t_\max}=r-T\text { and }T\le \ell \leq u-r\}.
\end{equation*}
Moreover 
\begin{equation}\label{ATnonempeq}
A_T\not=\emptyset \Leftrightarrow r \text{ is even and }u\ge \frac{3r}{2},
\end{equation}
and in that case $T=k_{t_\max}=\frac{r}{2}$

 Then for all $(k, \ell)\in A_t$, when $t\in \{1,2,\ldots ,T\}$ the partition $P_{k,\ell}=\left([u]^{k},[u-r]^{\ell}\right)$ is of type A and
satisfies $\mathfrak Q(P_{k, \ell})=(u,u-r)$.

\item[(b)] For every positive integer $t$ such that $1\le t\le\min\{u-r,\lfloor \frac{r-1}{2}\rfloor\}$, define the subset $C_t\subset  \mathbb{N}\times\mathbb{N}$ as 
$$C_t=\{(k,t)\,|\, k_t\leq k < c_t\}.$$ 
 Then for all $(k, \ell)\in C_t$, the partition $$P_{k,\ell}=\left([u-r+2t]^{t},[u-2t-d_t(q_t-1)]^{k-d_t}, (q_t-1)^{d_t})\right)$$
 is of type C but not of type A or B, and  satisfies $\mathfrak Q(P_{k, \ell})=(u,u-r)$.

\item[(c)] For every positive integer $t$ such that $1\le t\le\min\{u-r,\lfloor \frac{r-1}{2}\rfloor\}$, define the subset $B_t\subset \mathbb{N}\times\mathbb{N}$ as $$B_t=\{(k,t)\,|\, c_t\leq k \leq r-t\} \cup \{(r-t,\ell)\,|\,t< \ell \leq u-r\}.$$ 
Then for all $(k, \ell)\in B_t$, the partition $$P_{k,\ell}=\left([u-r+2t]^{t},[u-2t]^{k+\ell-t}\right)$$
 is of type B but not of type A and satisfies $\mathfrak Q(P_{k, \ell})=(u,u-r)$.

\item[(d)] Each pair $(k,\ell)\in \mathbb N\times \mathbb N$ with $1\le k\le r-1$ and $ 1\le \ell\le u-r$ belongs to one and only one set $A_t$, $B_t$ or $C_t$ defined above. In particular there are listed above $(r-1)(u-r)$ distinct partitions  $\{P_{k,\ell}\}$, each satisfying $\mathfrak Q(P_{k, \ell})=(u,u-r)$. Moreover, every partition $P_{k,\ell}$ has $k+\ell$ parts.

\end{itemize}

\end{theorem}
The proof of Theorem \ref{tablethm} starts on page \pageref{prooftablethm} after Remark \ref{tablerem}.

\begin{definition}\label{tabledef} 
\begin{enumerate}[(a)]
\item For $Q=(u,u-r)$ as in Theorem \ref{tablethm} we define the \emph{table} $\mathcal T(Q)$ as the array of partitions $\{P_{k,\ell} \mid 1\le k\le r-1, 1\le \ell\le u-r\}$ from Theorem \ref{tablethm}.
\item {[B/C hook]} For $1\le t\le\min\{u-r,\lfloor \frac{r-1}{2}\rfloor\}$ the set $\{ P_{k,\ell}\mid (k,\ell)\in B_t\cup C_t\}$ is called the \emph{$t$-th B/C hook of $\mathcal T(Q)$}. 
\item {[A row]} For a pair $(t,k)$ consisting of a positive integer $t$ such that $1\le t\leq \min\{u-r,\lfloor \frac{r-1}{2}\rfloor\}$ and $k$ satisfying $k_{t-1}\le k<k_t$, and also for $t=t_{max}+1$ and each $k$ satisfying $ k_{t-1}\le k\leq r-t$ the set of partitions $\{P_{k,\ell} \mid  t\le \ell\le u-r\}$ is called the \emph{$(t,k)$-th  A row} of $\mathcal T(Q)$, or, more simply, the $k$-th A row of $\mathcal T(Q)$.  \end{enumerate}
\end{definition}
 Evidently, when $1\le t\le t_\max$ the set $A_t$ of Theorem \ref{tablethm}(a) is the union of indices $(k,\ell)$ from all the $k$-th A rows of $\mathcal T(Q)$ for $k_{t-1}\le k<k_t$;  for $T=t_\max+1$ when the set $A_T$ is nonempty, it comprises a single A row with $k=r-T$. 

\begin{corollary}\label{hookcor} The A rows and B/C hooks form a decomposition of the table $\mathcal T(Q)$.
\end{corollary}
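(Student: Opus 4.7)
The plan is to derive this corollary directly from part (d) of Theorem~\ref{tablethm} by carefully matching the row/hook language of Definition~\ref{tabledef} against the index sets $A_t$, $B_t$, $C_t$. First, I would set $t_{\max A} = \min\{u-r,\lfloor (r-1)/2\rfloor\}$ and use Lemma~\ref{tableinvariantslem}(c) together with Notation~\ref{tablenotation2} to record that $0 = k_{-1} < k_0 < k_1 < \cdots < k_{t_{\max A}} = r$. Thus every integer $k$ with $1\le k\le r-1$ either satisfies $k_{t-1} < k < k_t$ for a unique $t \in \{0,1,\ldots,t_{\max A}\}$ or equals $k_t$ for a unique $t \in \{0,1,\ldots,t_{\max A}-1\} = \{0,1,\ldots,\min\{u-r,\lfloor(r-1)/2\rfloor\}-1\}$ (the latter being the index range on which the B/C hooks are defined).

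Next I would unpack Definition~\ref{tabledef}. The $t$-th B/C hook is, by definition, $\{P_{k,\ell} : (k,\ell)\in B_t\cup C_t\}$, so the union of all B/C hooks corresponds to the index set $\bigcup_t (B_t\cup C_t)$. For the A rows, I would observe that the $(t,k)$-th A row corresponds to the slice $\{(k,\ell) \in A_t : \text{first coordinate is } k\}$, and that as $k$ varies over the integers strictly between $k_{t-1}$ and $k_t$, these slices partition $A_t$. Hence the union of all A rows corresponds to $\bigcup_t A_t$.

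Combining these two observations, the index set of the union of all A rows and B/C hooks is exactly
\[
\bigcup_t A_t \;\cup\; \bigcup_t (B_t\cup C_t),
\]
which by Theorem~\ref{tablethm}(d) equals $\{(k,\ell) : 1\le k\le r-1,\ 1\le\ell\le u-r\}$, and the three families $\{A_t\}$, $\{B_t\}$, $\{C_t\}$ are pairwise disjoint. Pairwise disjointness of the A rows among themselves follows since distinct $k$ give disjoint slices, and pairwise disjointness of the B/C hooks among themselves follows from disjointness of the $B_t\cup C_t$. Thus the A rows and B/C hooks together form a partition of $\mathcal T(Q)$.

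The only potential obstacle is a bookkeeping one: verifying that the two "boundary" cases — the cells in the column $k = k_t$ and the overflow cells $(k, u-r-t)$ with $k>k_t$ that are absorbed into the $t$-th B/C hook — are not double-counted by any A row. This is immediate once one notes that an A row indexed by $(t',k)$ with $k > k_t$ uses a strictly smaller column bound $u-r-t' < u-r-t$, so the overflow cells lie outside every A row. Beyond this verification, no further work is required.
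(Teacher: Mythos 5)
Your proposal is correct and follows essentially the same route as the paper: the paper's one-line proof invokes assertion (1) from the proof of Theorem~\ref{tablethm}(d) (that the sets $A_t$, $B_t$, $C_t$ partition the index rectangle $\{1,\ldots,r-1\}\times\{1,\ldots,u-r\}$) together with Definition~\ref{tabledef}, and you simply unpack this by matching the A rows to the $k$-slices of the $A_t$ and the B/C hooks to the $B_t\cup C_t$. Your final ``bookkeeping'' paragraph is not strictly necessary once you have cited the ``one and only one'' statement of Theorem~\ref{tablethm}(d), but it does no harm and correctly verifies a point a careful reader might want checked.
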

The proof of Corollary \ref{hookcor} is given on page \pageref{proof3.15}.
\begin{remark}[Table decomposition into  A rows and B/C hooks]\label{tablerem} 
By Theorem \ref{tablethm} and Corollary \ref{hookcor} the $(r-1)\times (u-r)$ table $\mathcal T(Q)$ is decomposed into (disjoint) A rows and B/C hooks, each ending in the rightmost column of $\mathcal T(Q)$. The $t$-th B/C hook begins at $P_{k_t,t}$, $t\in \{1,\ldots ,t_\max\}$, has a lower left corner at $P_{r-t,t}$ and moves horizontally right to $P_{r-t,u-r}$. The top $k_1-1$ rows of the table are comprised of type A partitions. Each subsequent $(t,k)$-th A row or partial row with $t\ge 2$ begins to the right of the $(t-1)$-st B/C hook, and above the $t$-th B/C hook ($k_{t-1}\le k<k_{t})$ for some $t$, or to the right of the last B/C hook ($k\ge k_{t_\max}$). The $(t,k)$-th (or $k$-th)  A row begins at $P_{k,t}$ and ends at $P_{k,u-r}$: this leaves exactly the $t-1$ spaces at the start of the $k$ row of $\mathcal T(Q)$ for the column portion of the previous $t-1$ B/C hooks, each beginning at  $P_{k_{t'},t'}$ for $t'$ satisfying $1\le t'\le t-1$. In other words,
these rows and hooks exactly fit together to 
form the rectangular table $\mathcal T(Q)$.  Note that the last B/C hook will be entirely horizontal if   $k_{t_\max}=r-t_\max$, and it will be vertical if $t_{max}=u-r$.
\par Sometimes a partition has more than one type: for example $P=(a,a-1,a-2,a-3)$ with $\mathfrak Q(P)=(2a-1,2a-5)$ is of type A,B, and C.  However, by Definition \ref{tabledef}  a B/C hook consists of partitions that are of type B or C, but not of type A. Finally, each type C entry $P_{k,\ell}$ is preceded in its  B/C hook only by other type C entries, and by \eqref{partsboundCeq} they can occur only when $k+\ell\le \min \{2u/3, r\}$.
\end{remark}
\begin{proof}[Proof of Theorem \ref{tablethm}]\label{prooftablethm}\par
\noindent{\bf Part (a)} \par
{\bf Case 1}. Let $1\leq t \le\min\{u-r, \lfloor \frac{r-1}{2}\rfloor\}$ and suppose that $(k,\ell)\in A_t$. Then $k_{t-1} \leq  k <k_{t}$ and $ t\le\ell \le u-r$. 

Since $k<k_t$, by the definition of $k_{t}$ we have 
\begin{equation}\label{k-t} k \leq k_t-1 =\Big \lceil \frac{u+1-t+d_t}{q_t+1} \Big\rceil-1 \leq \frac{u-t+d_t}{q_t+1}.\end{equation}

Thus  $ \frac{u}{k}  \geq \frac{u}{u-t+d_t}(q_t+1).$ Since $d_t< t$, we get $ \frac{u}{k}  > q_t+1.$  In particular we have
 $$ \Big\lfloor \frac{u}{k}\Big\rfloor  \geq q_t+1 \mbox{ and }\Big\lceil \frac{u}{k}\Big\rceil  \geq q_t+2.$$ 

On the other hand, by equation \eqref{qtdteq} we have $[u-r]^{t}=\Big(q_t^{t-d_t}, (q_t-1)^{d_t}\Big).$
\\

If $\lfloor \frac{u}{k} \rfloor \geq q_t+2$, then the two longest U-chains poset of $P_{k,\ell}$ are $U_{top}=C_{\lceil \frac{u}{k}\rceil}$ and $U_{bottom} =C_{\lceil \frac{u-r}{\ell}\rceil}$, of lengths

$$\begin{array}{rlll}
|U_{top}|&= u,
\\ \\
|U_{bottom}|&= u-r+2k\\
&\leq u-r+2(k_t-1)  &&(\mbox{by the first inequality of \eqref{k-t}})\\
&\leq u-r+2(\lceil\frac{r+1}{2}\rceil-1) &\hspace{.1 in}&(\mbox{by Lemma }\ref{tableinvariantslem}b) \\
&\leq u.
\end{array}$$

So $\mathfrak Q(P_{k, \ell})=(u, u-r)$ in this case and $P_{k,\ell}$ is of type A.
\\
Now assume that $\lfloor \frac{u}{k}\rfloor = q_t+1$.  We can then write 
$$P_{k,t}=\Big((q_t+2)^{m}, (q_t+1)^{k-m}, q_t^{t-d_t}, (q_t-1)^{d_t}\Big)$$ for some $m \in \{1, \cdots ,k\}$.
\\

By (\ref{k-t}), $k(q_t+1)+(t-d_t)\leq u$. Thus $[u]^{k}$ has at least  $t-d_t$ parts of size $q_t+2$. So $m \geq t-d_t>0$.

\medskip

Thus, the three longest U-chains in the poset of $P_{k,t}$ have the following lengths:
$$
\begin{array}{rlll}
|U_{top}|&=|C_{q_t+2}|=u, \\ \\
|U_{middle}|&= |C_{q_t+1}|=u+(u-r)-(d_t(q_t-1)+m\,q_t)\\
&\leq u+(u-r)-(d_t(q_t-1)+(t-d_t)q_t)\\ 
&=u+u-r-(u-r)=u,&\mbox{ (By definition of } q_t\mbox{)}
\\ \\
|U_{bottom}|&=|C_{q_t}|=u-r+2k\\
&\leq u-r+2(k_t-1) &(\mbox{By the first inequality in \eqref{k-t}})\\
&\leq u-r+2(\lceil\frac{r+1}{2}\rceil-1) \hspace{.1 in}&(\mbox{By Lemma }\ref{tableinvariantslem}b) \\
&\leq u.
\end{array}$$

So $\mathfrak Q(P_{k,t})=(u, u-r)$ in this case as well and $P_{k,t}$ is of type A. For $\ell>t$ the size $|C_{q_t+1}|$ for the poset of $P_{k,\ell}$ is strictly smaller than that for $P_{k,t}$, while the lengths  $|U_{top}|$ and $|U_{bottom}|=|C_{\lceil \frac{u-r}{\ell}\rceil}|$ are the same as for $P_{k,t}$. This implies that for $(k,\ell)\in A_t$ the partition $P_{k,\ell}$ is of type A, and $\mathfrak Q(P_{k,\ell})=Q$,  as claimed. This completes the proof of part (a) Case 1.
\medskip
\par
{\bf Case 2}.
Let $t=\min\{u-r, \lfloor \frac{r-1}{2}\rfloor\}+1=T$.  We first prove \eqref{ATnonempeq}.
We assume that $r$ is even and $u\ge \frac{3r}{2}$. Then $u-r>\lfloor \frac{r-1}{2}\rfloor =\frac{r}{2}-1$ and therefore $T=\frac{r}{2}$ and $k_{t_\max}=\frac{r}{2}$ by 
Lemma~\ref{tableinvariantslem}(a), and clearly $A_T\not=\emptyset$.\par
We turn to `$\Rightarrow$'. Assume that $A_T\not=\emptyset$. Then $T\not=u-r+1$ since otherwise there would be no $\ell$ satisfying $ T\le \ell\le u-r$. Consequently $T=\lfloor \frac{r-1}{2}\rfloor+1<u-r+1.$  Let $(k,\ell)\in A_T$. Then $k_{t_\max}=k=r-T$. We use Lemma~\ref{tableinvariantslem}(a). If $k_{t_\max}=\lceil \frac{r+1}{2}\rceil$ then we obtain the contradiction
\begin{equation*}
\big\lceil \frac{r+1}{2}\big\rceil=k_{t_\max}=r-T=r-\left(\big\lfloor \frac{r-1}{2}\big\rfloor+1\right)=r-\big\lfloor\frac{r+1}{2}\big\rfloor\le \frac{r}{2}.
\end{equation*}
Therefore $r$ is even and $k_{t_\max}=\frac{r}{2}$. This proves \eqref{ATnonempeq} and the claim that when $A_T\not=\emptyset$ then $T=k_{t_\max}=\frac{r}{2}$.
For the remainder of the proof of part (a) we assume that $r$ is even, $u\ge \frac{3r}{2}$, and $k=k_{t_\max}=T=\frac{r}{2}\le \ell\le u-r$. Then

\begin{equation*}
\begin{array}{l}
\displaystyle{\frac{u}{k}}=
\displaystyle{\frac{u}{\frac{r}{2}}}=\displaystyle{\frac{2u}{r}}, \,\text {    and }\\\\
\displaystyle{\frac{u-r}{\ell}}\leq
 \displaystyle{\frac{u-r}{\frac{r}{2}}}=\displaystyle{\frac{2u}{r}}-2.
\end{array}
\end{equation*}

Therefore $ \lceil \frac{u-r}{\ell}\rceil\leq \lfloor\frac{u}{k}\rfloor-1.$
\\

If $ \lceil \frac{u-r}{\ell}\rceil\leq \lfloor\frac{u}{k}\rfloor-2$ then the longest U-chains in the poset of $P_{k, \ell}$ have the following lengths:

$$\begin{array}{rlll}
|U_{top}|&= u,
\\ \\
|U_{bottom}|&= u-r+2k\\
&= u-r+2(\frac{r}{2}) && \\
&= u.
\end{array}$$

So $\mathfrak Q(P_{k, \ell})=(u, u-r)$ in this case and $P_{k,\ell}$ is of type A (and of type B).
\\

Now assume that $ \lceil \frac{u-r}{\ell}\rceil = \lfloor\frac{u}{k}\rfloor-1$. Let $q=\lceil \frac{u-r}{\ell}\rceil$. Then there exist nonnegative integers $n_1$ and $n_{0}$ such that $$[u]^k=\Big((q+2)^{n_1}, (q+1)^{k-n_1}\Big) \mbox{ and } [u-r]^\ell =\Big(q^{n_{0}}, (q-1)^{\ell-n_{0}}\Big).$$ 

In particular $u=(q+1)k+n_1$ and $u-r=(q-1) \ell+n_{0}$. Thus 

\begin{equation}\label{righthalf}
\begin{array}{ll}
n_1-n_{0}&=u-(q+1)k-[u-r-(q-1)\ell]\\ \\
&=(q-1)(\ell-k)-2k+r\\ \\
&\geq(q-1)(\frac{r}{2}-\frac{r}{2})-2\frac{r}{2}+r \text { (since $k=k_{t_\max}=T=\frac{r}{2}\le \ell$})\\\\
&= 0.
\end{array}
\end{equation}

Thus, the longest three U-chains in the poset of $P_{k,\ell}$ have the following lengths:
$$
\begin{array}{rlll}
|U_{top}|&=|C_{q+2}|=u, \\ \\
|U_{middle}|&= |C_{q+1}|=u+(u-r)-((\ell-n_{0})(q-1)+n_1\,q)\\
&=u+(u-r)-((\ell-n_{0})(q-1)+n_{0}\,q+(n_1-n_0)q)\\
&=u+(u-r)-(u-r+(n_1-n_0)q)\text { (by \eqref{righthalf})}\\
&\leq u+(u-r)-(u-r)=u,
\\ \\
|U_{bottom}|&=|C_{q}|=u-r+2k\\
&=u-r+2(\frac{r}{2})=u.

\end{array}$$
So $\mathfrak Q(P_{k,\ell})=(u, u-r)$ in this case as well and $P_{k,\ell}$ is of type A (and of type B and possibly also of type C).
\vskip 0.2cm

\noindent{\bf Part (b).} Let $1\leq t \le \min\{u-r, \lfloor\frac{r-1}{2}\rfloor\}$ and let $(k,\ell)\in C_t$. Then $\ell=t$ and $k_t\leq k < c_t$.

If $c_t\leq  k_t$ then $C_t$ is empty and therefore there is nothing to prove in this case. We assume that $c_t> k_t$. This in particular means $d_t>0$ by \eqref{kteq}. We also note that by definition $q_t>1$ unless $t=u-r$ in which case $d_t=0$. So we also have $q_t>1$.

Since $k_t \leq k$, using the definition of $k_t$ we get
 \begin{equation}\label{31biseq}
 k-d_t \geq \Big\lceil \frac{u+1-t+d_t}{q_t+1}\Big\rceil-d_t  \geq \frac{u+1-t+d_t}{q_t+1}-d_t=\frac{u+1-t-d_tq_t}{q_t+1} .
 \end{equation}
Since by Remark \ref{3.10rem} we have $d_t<t$, we also get
 \begin{equation}\label{3.22biseq}
 u-2t-d_t(q_t-1)=(u+1-t-d_tq_t)-(t+1-d_t)<(u+1-t-d_tq_t).
 \end{equation} 
Moreover, $u-2t-d_t(q_t-1)\ge 0$ and therefore $u+1-t-d_tq_t>0$ and $k-d_t>0$. 
Indeed $u-2t-d_t(q_t-1)>u-2t-t(q_t-1)=u-t-(d_t+(u-r))>u-2t-u+r=r-2t>0$.
Thus,
\begin{equation*}
\frac{u-2t-d_t(q_t-1)}{k-d_t} \leq \frac{u-2t-d_t(q_t-1)}{u+1-t-d_tq_t}\, (q_t+1).
\end{equation*} 
This implies by \eqref{3.22biseq}
\begin{equation}\label{abceq1}
\frac{u-2t-d_t(q_t-1)}{k-d_t} <\frac{(u+1-t-d_tq_t)}{u+1-t-d_tq_t}\, (q_t+1)=q_t+1.
\end{equation}

On the other hand, since $k<c_t$, using the definition of $c_t$ we get  
$$k-d_t \leq c_t-1-d_t =\Big\lceil \frac{u-2t+d_t}{q_t}\Big\rceil -1 -d_t \leq \frac{u-2t+d_t+q_t-1}{q_t}-1-d_t=\frac{u-2t-d_t(q_t-1)-1}{q_t}.$$

Thus
\begin{equation}\label{xyzeq1}
\frac{u-2t-d_t(q_t-1)}{k-d_t}\geq \frac{u-2t-d_t(q_t-1)}{u-2t-d_t(q_t-1)-1}q_t> q_t.
\end{equation}

It follows from \eqref{abceq1} and \eqref{xyzeq1} that we can write
 $$[u-2t-d_t(q_t-1)]^{k-d_t}=(\, (q_t+1)^{n_1}, q_t^{n_0}\,)$$ 
 with $
n_1\geq 1$. Moreover, we have by \eqref{31biseq} 
$$(k-d_t)(q_t+1)\geq u+1-t-d_tq_t=u-2t-d_t(q_t-1)+(t+1-d_t),$$ we get 
\begin{equation}\label{n0eq}
n_0 \geq t+1-d_t,
\end{equation}
and $n_0+n_1=k-d_t$ is positive.

Thus, using that $[u-r+2t]^{t}=((q_t+2)^{t-d_t},(q_t+1)^{d_t})$, as follows from \eqref{qtdteq}, we have
\begin{equation}\label{tableCeq}
P_{k,\ell}=((q_t+2)^{t-d_t},(q_t+1)^{d_t+n_1},q_t^{n_0}, (q_t-1)^{d_t}).
\end{equation}
 Therefore the three longest U-chains in the poset of $P_{k,\ell}$ have the following lengths:

$$
\begin{array}{rll}
|U_{top}|&=|C_{q_t+2}|\\&=(u+u-r)-(d_t(q_t-1)+n_0q_t)\\
&=u+u-r-d_t(q_t-1)-n_0q_t\\
&\leq (u+u-r)-d_t(q_t-1)-(t+1-d_t)q_t \text{ by \eqref{n0eq} }\\
&=u+u-r-d_tq_t-(-d_t+tq_t)-q_t+d_tq_t\\
&=u-q_t \qquad\qquad (\mbox{by the definition of } q_t), \\
&<u. \quad\qquad\qquad \qquad(\mbox {since } q_t> 1).
\\ \\

|U_{middle}|&=|C_{q_t+1}|\\&= (u+u-r)-\left[(t-d_t)(q_t+2)+d_t(q_t-1)\right]+2(t-d_t)\\
&=(u+u-r)-\left[tq_t-d_t\right]\\
&=u. \\ \\

|U_{bottom}|&=|C_{q_t}|\\&= (u+u-r)-[(t-d_t)q_t+(d_t+n_1)(q_t-1)]\\\\
		    &=(u+u-r)-[(t+n_1)q_t-(d_t+n_1)]\\\\
		    &=(u+u-r)-[tq_t-d_t+n_1(q_t-1)]\\\\
		    &=u-n_1(q_t-1) \qquad\qquad (\mbox{by the definition of }q_t)\\\\
		    &<u. \qquad\qquad(\mbox{since } q_t>1 \mbox{ and } n_1\geq 1).

\end{array}$$
So $\mathfrak Q(P_{k,\ell})=(u,u-r),$ as desired, and $P_{k,\ell}$ is of type C but not of type A or B.
\vspace{.3 in}

\noindent{\bf Part (c).} Let $1\leq t \le \min\{u-r, \lfloor\frac{r-1}{2}\rfloor\}$ and let $(k,\ell)\in B_t$. Then either $\ell=t$ and $c_t \leq k \le r-t$, or $k=r-t$ and $t<\ell\leq u-r$.
Recall $P_{k,\ell}=\left([u-r+2t]^{t},[u-2t]^{k+\ell-t}\right)$. \\
\par

{\bf Case 1.} Let $d_t=0$.

In this case $k_t=\displaystyle{\Big\lceil \frac{u+1-t}{q_t+1}\Big\rceil }$ and $[u-r+2t]^{t}=\Big((q_t+2)^{t}\Big).$ 
\\

On the other hand the assumption $d_t=0$ implies $c_t=k_t$, by definition of $c_t$. Since $k \geq c_t$ and $\ell \geq t$, we get 
$$
k+\ell-t\geq k_t=\Big\lceil \frac{u+1-t}{q_t+1}\Big\rceil \geq \frac{u+1-t}{q_t+1}. 
$$
Thus 
\begin{equation}\label{partc}
\begin{array}{lll}
\Big(k+\ell-t\Big)(q_t+1)&\geq u+1-t\\
&=u-2t+(t+1)\\
&>u-2t.&
\end{array}
\end{equation}

Therefore  $\lceil \frac{u-2t}{k+\ell-t}\rceil\leq q_t+1$.
\\

\begin{quote}
{\bf Case 1.1.} If $\lceil \frac{u-2t}{k+\ell-t} \rceil < q_t+1$, then the largest part of the partition \break $[u-2t]^{k+\ell-t}$ is at most $q_t$, and therefore it is not adjacent to the parts of $[u-r+2t]^{t}$. Thus the lengths of the two longest U-chains in the poset of $P_{k,\ell}$ are as follows.
$$
\begin{array}{rlll}
|U_{top}|&=u-r+2t\\
&\leq u-r+2\lfloor\frac{r-1}{2}\rfloor\\
&< u.
\end{array}
$$

\begin{align*}
|U_{bottom}|&=2t+(u-2t)\\
&=u.
\end{align*}

So $\mathfrak Q(P_{k,\ell})=(u,u-r)$ and $P_{k,\ell}$ is of type B but not of type A.
\end{quote}

\begin{quote}
{\bf Case 1.2.} If $\lceil \frac{u-2t}{k+\ell-t} \rceil = q_t+1$, then formula (\ref{partc}), thanks to \eqref{nkeq},  implies that the partition \break$[u-2t]^{k+\ell-t}$ has at least $t+1$ parts of size $q_t$. Thus we can write $$[u-2t]^{k+\ell-t}=((q_t+1)^{k+\ell-t-n_0},q_t^{n_0}),$$ with $n_0\geq t+1$. Therefore $P_{k,\ell}=(\,(q_t+2)^{t}, (q_t+1)^{k+\ell-t-n_0},q_t^{n_0}\, )$ and the lengths of the longest two U-chains in the poset of $P_{k,\ell}$ are as follows.
$$
\begin{array}{rlll}
|U_{top}|&=|C_{q_t+2}|\\
&=(u+u-r)-n_0q_t\\
&\leq (u+u-r)-(t+1)q_t\\
&=u+u-r-(u-r)-q_t &\quad (\mbox{since } u-r=tq_t)\\ 
&<u. &\quad (\mbox{since } q_t>0). \\ \\

|U_{bottom}|&=|C_{q_t+1}|\\
&=(u+u-r)-tq_t\\
&=u+u-r-(u-r) &\quad (\mbox{since } u-r=tq_t)\\
&=u. 
\end{array}$$
So $\mathfrak Q(P_{k,\ell})=(u,u-r)$ and $P_{k,\ell}$ is of type B but not of type A.
\end{quote}
This completes the proof of (c) in Case 1.
\bigskip

{\bf Case 2.} Let $d_t>0$.

In this case $[u-r+2t]^{t}=((q_t+2)^{t-d_t}, (q_t+1)^{d_t}).$

Since by assumption $k\geq c_t$, and $\ell \geq t$, by the definition of $c_t$ we also have
  $$k+\ell-t\geq c_t=\Big\lceil \frac{u-2t+d_t}{q_t}\Big\rceil \geq \frac{u-2t+d_t}{q_t}.$$ 

Therefore \begin{equation}\label{case2k_t}(k+\ell-t)q_t\geq u-2t+d_t>u-2t.\end{equation}

Thus $\lceil \frac{u-2t}{k+\ell-t}\rceil \leq q_t$. 

\begin{quote}
{\bf Case 2.1.} If $\lceil \frac{u-2t}{k-t+\ell}\rceil < q_t$ then the lengths of the longest two U-chains in the poset of $P_{k,\ell}$ are as follows.
$$
\begin{array}{rlll}
|U_{top}|&=u-r+2t\\
&\leq u-r+2\lfloor\frac{r-1}{2}\rfloor\\
&< u.
 \\ \\
|U_{bottom}|&=2t+[u-2t]\\
&=u. 
\end{array}$$

So $\mathfrak Q(P_{k,\ell})=(u,u-r)$ and $P_{k,\ell}$ is of type B but not of type A.

\end{quote}

\begin{quote}
{\bf Case 2.2.} If $\lceil \frac{u-2t}{k-t+\ell}\rceil = q_t$, then by (\ref{case2k_t}) the partition $[u-2t]^{k-t+\ell}$ has at least $d_t$ parts of size $q_t-1$. So we can write $$[u-2t]^{k+\ell-t}=(q_t^{k+\ell-t-n_{-1}},(q_t-1)^{n_{-1}}),$$ with $n_{-1}\geq d_t$. Thus 
$$P_{k,\ell}=((q_t+2)^{t-d_t}, (q_t+1)^{d_t}, q_t^{k+\ell-t-n_{-1}},(q_t-1)^{n_{-1}})$$ and the lengths of the longest three U-chains in the poset of $P_{k,\ell}$ are as follows.

$$
\begin{array}{rlll}
|U_{top}|&=|C_{q_t+2}|\\
&=u-r+2t\\
&\leq u-r+2\lfloor\frac{r-1}{2}\rfloor\\
&<u. \\ \\

|U_{middle}|&=|C_{q_t+1}|\\
&=(u+u-r)-[(t-d_t)q_t+n_{-1}(q_t-1)]\\
&\leq u+u-r-[(t-d_t)q_t+d_t(q_t-1)] \text { (by the definition of $q_t$ and $d_t$) }\\
&= u. \\ \\

|U_{bottom}|&=|C_{q_t}|\\
&=2t+[u-2t]\\
&=u. 
\end{array}$$

So $\mathfrak Q(P_{k,\ell})=(u,u-r)$ and $P_{k,\ell}$ is of type B but not of type A. (It may also be of type C.)
\end{quote}

This completes the proof in Case 2 and therefore the proof of part (c).
\vspace{.3 in}

\noindent{\bf Part (d)}. 
 It is easy to check that by construction each partition $P_{k,\ell}$ in parts (a),(b),(c) of the Theorem has $k+\ell$ parts. What remains is to show
\begin{enumerate}[(1)]
\item every $(k,\ell)$ with $1\le k\le r-1$ and $1\le \ell \le u-r$ belongs to exactly one of the sets $A_t,B_t,C_t$, and
\item  all partitions $P_{k,\ell}$ are distinct, that is, if $(k,\ell)\not=(k',\ell')$, then $ P_{k,\ell}\not=P_{k',\ell'}$.
\end{enumerate}
\noindent
We begin with the proof of assertion (1).  Let $(k,\ell) \in \{1,2,\ldots ,r-1\}\times \{1,2,\ldots, u-r\}$ and let $T=\min \{u-r,\lfloor {\frac{r-1}{2}}\rfloor\}+1=t_\max+1. $ 

Then either $1\leq \ell < T$ or $\ell \geq T$.
\vskip 0.2cm
Assume first that $1\leq \ell < T$. If $k<k_{\ell}$, then $(k, \ell) \in A_{\ell'}$, where $\ell'$ is the smallest positive integer such that $k<k_{\ell'}$.  If $k_{\ell-1}\le k<k_{\ell}$ then $(k, \ell) \in A_{\ell}$. If $k_{\ell}\leq k < c_{\ell}$ then $(k, \ell) \in C_{\ell}$. If $c_{\ell}\leq k \leq r-\ell$ then $(k, \ell)\in B_{\ell}$. Finally if $k>r-\ell$ then $r-k< T$ and $(k, \ell)\in B_{r-k}$.
\\
We now assume that $\ell \geq T$. Then $T\le u-r$ and therefore $T=\lfloor\frac{r-1}{2}\rfloor+1=\lfloor \frac{r+1}{2}\rfloor$. Recall from the definition and Lemma \ref{tableinvariantslem}(b) that $k_0=1$ and that $(k_0,k_1, \cdots, k_{t_{\max}})$ is a nondecreasing sequence. If $k_{t-1}\le k<k_t$ for some $t\in \{1,2,\ldots t_\max\}$ then 
$(k,\ell)\in A_t$. If $k_{t_\max}\le k\le r-T$ then $k_{t_\max}=\frac{r}{2}=T$ by Lemma \ref{tableinvariantslem}(a) and $(k, \ell) \in A_{T}$. If $k> r-T$ then $r-k\leq T-1$ and  $(k, \ell)\in B_{r-k}$.
\\
 We have shown that the elements of the following union of three sets
 \begin{equation}\label{covereq}
 \{A_t:1\le t\le T\}\bigcup \{B_t: 1\le t< T\}\bigcup \{ C_t: 1\le t<T\}
\end{equation}
are subsets of and cover the rectangle $\{1,2,\ldots ,r-1\}\times \{1,2,\ldots ,u-r\} $ in $\mathbb N\times\mathbb N$. By inspection, one checks that any two of the sets in the set \eqref{covereq} are disjoint. This completes the proof of assertion (1).
\vskip 0.2cm\noindent
We proceed with the proof of assertion (2). 
Let $(k,\ell)$ and $(k',\ell')$ be two distinct elements of $\{1,\ldots ,r-1\}\times \{1,\ldots ,u-r\}$.
Put 
\begin{align*}\mathcal F_A&=\cup\{A_t\mid 1\le t \leq T\}\\
\mathcal F_B&=\cup\{B_t\mid 1\le t<T\}\\
\mathcal F_C&=\cup\{C_t\mid 1\leq t<T\}.
\end{align*}
It follows from parts (a),(b),(c) of Theorem \ref{tablethm} that if $(k,\ell) $ and $(k',\ell')$ do not belong to the same set
 $\mathcal F_A,\mathcal F_B,\mathcal F_C$, then $P_{k,\ell}\not=P_{k',\ell'}$ since the two partitions are of different types.\par

\noindent{\bf Case 1}.  Suppose $(k,\ell)$ and $(k',\ell')\in \mathcal F_A$. Then $P_{k,\ell}=\left([u]^{k},[u-r]^{\ell}\right)$ and $P_{k',\ell'}=\left([u]^{k'},[u-r]^{\ell'}\right)$ are obviously distinct. 
\vskip 0.2cm\noindent
{\bf Case 2}. Suppose now that $(k,\ell)$ and $(k',\ell')$ are both in $\mathcal F_C$.  Then $(k, \ell)\in C_{\ell}$ and $(k', \ell')\in C_{\ell'}$. We also have 
$$\begin{array}{ll}P_{k, \ell}&=([u-r+2\ell]^{\ell}, [u-2\ell-d_{\ell}(q_{\ell}-1)]^{k-d_{\ell}}, (q_{\ell}-1)^{d_{\ell}}) \mbox{ and} \\ \\ P_{k, \ell}&=([u-r+2\ell']^{\ell'}, [u-2\ell'-d_{\ell'}(q_{\ell'}-1)]^{k-d_{\ell'}}, (q_{\ell'}-1)^{d_{\ell'}}). \end{array}$$
\begin{quote}
{\bf Case 2.1}. Suppose $\ell = \ell'$.  Then since $(k,\ell)\not=(k',\ell')$ we must have $k\neq k'.$ Thus for $t=\ell=\ell'$ we have
\begin{equation*}
[u-2t-d_t(q_t-1)]^{k-d_t}\not=[u-2t-d_t(q_t-1)]^{k'-d_t},
\end{equation*}
and so $P_{k,\ell}\not=P_{k',\ell'}$.
\end{quote}
\vskip 0.2cm
\begin{quote}
\noindent{\bf Case 2.2.} Suppose $\ell'<\ell$. Recall that \eqref{qtdteq} implies that
$$\begin{array}{ll}
[u-r+2\ell]^{\ell}&=\Big( (q_{\ell}+2)^{\ell-d_{\ell}}, (q_{\ell}+1)^{d_{\ell}} \Big)\\ \\

[u-r+2\ell']^{\ell'}&=\Big( (q_{\ell'}+2)^{\ell'-d_{\ell'}}, (q_{\ell'}+1)^{d_{\ell'}} \Big).
\end{array}$$ 

So to show that $P_{k,\ell}\not=P_{k',\ell'}$ it is enough to show that the rectangular partitions $(q_{\ell}+2)^{\ell-d_{\ell}}$ and $(q_{\ell'}+2)^{\ell'-d_{\ell'}}$ are distinct.
This is obvious if $q_{\ell'}>q_{\ell}$. Assume that $q_{\ell'}=q_{\ell}=q$. By Definition \ref{tablenotationdef}, we have 
$$u-r=\ell'q-d_{\ell'}=\ell q-d_{\ell}.$$ 

Then $(d_{\ell}-d_{\ell'})=q(\ell-\ell'),$ and therefore $(\ell'-d_{\ell'})-(\ell-d_{\ell})=(q-1)(\ell-\ell').$

Then since $\ell'<\ell\leq u-r$, we must have $q>1$ by the definition of $q=q_{\ell'}$ and consequently $(\ell'-d_{\ell'})-(\ell-d_{\ell})>0$. This in particular implies that $(q+2)^{\ell-d_\ell}$ and $(q+2)^{\ell'-d_{\ell'}}$ and therefore $P_{k,\ell}$ and $P_{k',\ell'}$ are distinct partitions.
\end{quote}
\vskip 0.2cm\noindent
{\bf Case 3}. 
Suppose now that $(k,\ell)$ and $(k',\ell')$ are both in $\mathcal F_B$.  
\vskip 0.2cm
{\bf Case 3.1} Suppose that $\ell=\ell'$ and $k'<k$. \par\par
\begin{quote}
{\bf Case 3.1.1.} Suppose that $k'<k\leq r-\ell$, then $(k,\ell)$ and $(k',\ell')$ are both in $B_{\ell}$. Then $P_{k,\ell}=\Big([u-r+2\ell]^\ell, [u-2\ell]^{k}\Big)$ and $P_{k',\ell'}=\Big([u-r+2\ell]^\ell, [u-2\ell]^{k'}\Big)$. Since $k\neq k'$, we obviously get $P_{k,\ell}\neq P_{k',\ell'}$ in this case. 
\end{quote}

\begin{quote}
{\bf Case 3.1.2.} Suppose that $k'\leq r-\ell<k$, then $(k',\ell')\in B_{\ell}$ and $(k, \ell)\in B_{r-k}$. Thus $$P_{k',\ell'}=\Big([u-r+2\ell]^\ell, [u-2\ell]^{k'}\Big)$$ and $$P_{k,\ell}=\Big([u-r+2(r-k)]^{r-k}, [u-2(r-k)]^{k+\ell-(r-k)}\Big).$$ 

By definition of $q_{\ell}$ we have $[u-r+2\ell]^\ell=\Big((q_{\ell}+2)^{\ell-d_{\ell}}, (q_{\ell}+1)^{d_{\ell}} \Big)$. On the other hand, the biggest part of $[u-r+2(r-k)]^{r-k}$ is $\lceil\frac{u-r}{r-k}\rceil+2$. So $P_{k,\ell}$ and $P_{k', \ell'}$ are obviously distinct when $\lceil\frac{u-r}{r-k}\rceil\neq q_{\ell}$. We assume next that $\lceil\frac{u-r}{r-k}\rceil = q_{\ell}=q$. Then there exists an 
integer $e$ such that $0\leq e <r-k$ and $u-r=(r-k)q-e$. Then by (2.1) $[u-r+2(r-k)]^{r-k}=\Big((q+2)^{r-k-e}, (q+1)^e\Big).$  On the other hand since $u-r=\ell q-d_\ell$ we also have $$\ell-d_\ell=(r-k-e)-(q-1)(\ell-(r-k)).$$  Since $r-k<\ell\leq u-r$, we also have $q>1$ and $\ell-(r-k)>0$. Thus $\ell-d_{\ell}<r-k-e$. Consequently, $P_{k,\ell}$ and $P_{k',\ell'}$ have different numbers of parts of size $q+2$, so they are distinct.
\end{quote}

\begin{quote}
{\bf Case 3.1.3} Suppose that $r-\ell<k'<k$, then $(k',\ell')\in B_{r-k'}$ and $(k, \ell)\in B_{r-k}$. Thus $$P_{k',\ell'}=\Big([u-r+2(r-k')]^{r-k'}, [u-2(r-k')]^{k'+\ell-(r-k')}\Big)$$ and $$P_{k,\ell}=\Big([u-r+2(r-k)]^{r-k}, [u-2(r-k)]^{k+\ell-(r-k)}\Big).$$ 

Then an argument very similar to the one for case 3.1.2 shows that either the biggest parts of $P_{k,\ell}$ and $P_{k',\ell'}$, or the number of the second biggest parts of the two partitions are different and therefore $P_{k,\ell}$ and $P_{k',\ell'}$ are distinct.
\end{quote}

\vskip 0.2cm
{\bf Case 3.2}.  Suppose that $\ell'<\ell$.  If $k+\ell\neq k'+\ell'$ then $P_{k,\ell}$ and $P_{k',\ell'}$ have different number of parts and therefore are distinct. For the rest of the proof of this case we assume that $k+\ell=k'+\ell'$. This in particular implies that $k'>k$. 

We note that if $\ell'\leq r-k'$ and $\ell\leq r-k$ then $(k,\ell)\in B_\ell$ and $(k',\ell')\in B_{\ell'}$. If $\ell'\leq r-k'<r-k<\ell$ then $(k',\ell') \in B_{\ell'}$ and $(k,\ell)\in B_{r-k}$. If $r-k'<\ell'<\ell \leq r-k$ then $(k',\ell') \in B_{r-k'}$ and $(k,\ell)\in B_\ell$. And if $r-k'<\ell'$ and  $r-k<\ell$ then $(k',\ell') \in B_{r-k'}$ and $(k,\ell)\in B_{r-k}$. Using the argument in the proof of case 2.2, we can show that if $\tau'<\tau$ then $[u-r+2\tau]^{\tau}\neq [u-r+2\tau']^{\tau'}$. So in all cases listed above, the top almost rectangular partitions of $P_{k,\ell}$ and $P_{k',\ell'}$ are different. Therefore  $P_{k,\ell} \neq P_{k',\ell'}$.This completes the proof of part (d) of Theorem \ref{tablethm}.\end{proof}
\begin{proof}[Proof of Corollary \ref{hookcor}] \label{proof3.15}
The Corollary follows immediately from the proof of Assertion (1) in the proof of Theorem \ref{tablethm}(d) and Definition \ref{tabledef}.
\end{proof}
\begin{example}[Table $\mathcal T(Q)$ and table invariants for $Q=(27,3)$]\label{27,3ex}
Here $u-r=3$, and $r=24$. We have $t_\max=\min\{u-r,\lfloor \frac{r-1}{2}\rfloor\}=3$ and the table invariants of Definition \ref{tablenotationdef} are\par
$$\begin{array}{|c|c|c|}
\hline
&&
\\
\begin{array}{ll}
q_1&=\lceil\frac{3}{1}\rceil=3\\ \\
d_1&=0, \\ \\
k_1&=\lceil \frac{27}{4} \rceil=7, \mbox{ and}\\ \\
c_1&=7.

\end{array}
&
\begin{array}{ll}
q_2&=\lceil\frac{3}{2}\rceil=2\\ \\
d_2&=1, \mbox{ and}\\ \\
k_2&=\lceil \frac{27}{3} \rceil=9, \mbox{ and}\\ \\
c_2&=\lceil \frac{24}{2}\rceil=12.
\end{array}
&
\begin{array}{ll}
q_3&=\lceil\frac{3}{3}\rceil=1\\ \\
d_3&=0, \\ \\
k_3&=\lceil \frac{25}{2} \rceil=13, \mbox{ and}\\ \\
c_3&=13.

\end{array}
\\
&& \\
\hline
\end{array}$$

\begin{table}[ht]
\large
$$\begin{array}{|l|l|l|}
\hline 

(27,3)&(27,[3]^2)&(27,[3]^3)\\
\hline
([27]^2,3)&([27]^2,[3]^2)&([27]^2,[3]^3)\\
\hline 
\vdots \hspace{.4 in} \vdots\hspace{.4 in} \vdots&\vdots \hspace{.4 in} \vdots\hspace{.4 in} \vdots&\vdots \hspace{.3 in} \vdots\hspace{.3 in} \vdots\\
\hline
([27]^6,3)&([27]^6,[3]^2)&([27]^6,[3]^3)\\
\hline
{\cellcolor{light-gray}(5, [25]^{7})}&([27]^7, [3]^{2})&([27]^7, [3]^{3})\\
\hline
{\cellcolor{light-gray}(5, [25]^{8})}&([27]^8, [3]^{2})&([27]^8, [3]^{3})\\
\hline
{\cellcolor{light-gray}(5, [25]^{9})}&{\cellcolor{light-pink}\color{blue}{([7]^2, [22]^{8}, 1)}}&([27]^9, [3]^{3})\\
\hline
{\cellcolor{light-gray}(5, [25]^{10})}&{\cellcolor{light-pink}\color{blue}{([7]^2, [22]^{9}, 1)}}&([27]^{10}, [3]^{3})\\
\hline
{\cellcolor{light-gray}(5, [25]^{11})}&{\cellcolor{light-pink}\color{vivid-blue}{([7]^2, [22]^{10}, 1)}}&([27]^{11}, [3]^{3})\\
\hline
{\cellcolor{light-gray}(5, [25]^{12})}&{\cellcolor{med-pink}{\color{vivid-blue}([7]^2, [23]^{12})}}&([27]^{12}, [3]^{3})\\
\hline
{\cellcolor{light-gray}(5, [25]^{13})}&{\cellcolor{med-pink}{\color{vivid-blue}([7]^2, [23]^{13})}}&{\cellcolor{aqua}{\color{purple}([9]^{3}, [21]^{13})}}\\
\hline
{\cellcolor{light-gray}(5, [25]^{14})}&{\cellcolor{med-pink}{\color{vivid-blue}([7]^2, [23]^{14})}}&{\cellcolor{aqua}{\color{purple}([9]^{3}, [21]^{14})}}\\
\hline
{\cellcolor{light-gray}\vdots \hspace{.4 in} \vdots\hspace{.4 in} \vdots}&{\cellcolor{med-pink}\vdots \hspace{.4 in} \vdots\hspace{.4 in} \vdots}&{\cellcolor{aqua}\vdots \hspace{.3in} \vdots\hspace{.3 in} \vdots}\\
\hline
{\cellcolor{light-gray}(5, [25]^{20})}&{\cellcolor{med-pink}{\color{vivid-blue}([7]^2, [23]^{20})}}&{\cellcolor{aqua}{\color{purple}([9]^{3}, [21]^{20})}}\\
\hline
{\cellcolor{light-gray}(5, [25]^{21})}&{\cellcolor{med-pink}{\color{vivid-blue}([7]^2, [23]^{21})}}&{\cellcolor{aqua}{\color{purple}([9]^{3}, [21]^{21})}}\\
\hline
{\cellcolor{light-gray}(5, [25]^{22})}&{\cellcolor{med-pink}{\color{vivid-blue}([7]^2, [23]^{22})}}&{\cellcolor{med-pink}{\color{vivid-blue}([7]^2, [23]^{23})}}\\
\hline
{\cellcolor{light-gray}(5, [25]^{23})}&{\cellcolor{light-gray}(5, [25]^{24})}&{\cellcolor{light-gray}(5, [25]^{25})}\\
\hline
\end{array}$$
\caption{Table $\mathcal T(Q), Q=(27,3)$}\label{27,3table}
\end{table}

\normalsize
Recall from Theorem \ref{tablethm} that $k_1=7,k_2=9,k_3=13$ are the rows of  $\mathcal T(Q)$ where B/C hooks begin. By Theorem~\ref{tablethm} we have
 
\begin{itemize} 
\item For $1\leq k\leq 6$, and $1\leq \ell \leq 3$, we have $P_{k,\ell}=([27]^k, [3]^\ell).$  (Type A)
\item For $\ell=1$ and $7\leq k \leq 23$, we have $P_{k,1}=(5, [25]^{k})$. (Type B, vertical part of B/C hook, in green)
\item For $k=23$, and $2 \leq \ell\leq 3$, we have $P_{23,\ell}=(5, [25]^{\ell+22})$. (Type B, horizontal part of the same B/C hook, also in green)
\item For $7\leq k\leq 8$, and $2\leq \ell \leq 3$, we have $P_{k,\ell}=([27]^k, [3]^{\ell}).$ (Type A)
\item For $\ell=2$ and $9 \leq k \leq 11$, we have $P_{k,2}=([7]^2, [22]^{k-1}, 1)$. (Type C, in light pink in Table~\ref{27,3table})
\item For  $\ell=2$ and $12\leq k\leq 22$, we have $P_{k, \ell}=([7]^2, [23]^{k+\ell-2})$. (Type B, dark pink in Table~\ref{27,3table}, vertical part of the B/C hook that started with light pink type C partitions)

\item $P_{22,3}=([7]^2, [23]^{23})$. (Type B in Table~\ref{27,3table}, the horizontal part of the B/C hook that started with light pink type C partitions).
\item For $9\leq k \leq12$ and $\ell=3$ we have $P_{k,3}=([27]^{k}, [3]^3).$ (Type A)
\item For $\ell=3$ and $13\leq k \leq 21$, $P_{k,3}=([9]^3, [21]^{k}).$ (Type B, this vertical B/C hook is in aqua in Table \ref{27,3table}). 

\end{itemize}
\end{example}
\subsection{Alternating pattern for $\mathcal T(Q)$.}
We say that the table $\mathcal T(Q)$ (or, for short, $Q$) has \emph{alternating pattern} if both of the following conditions are satisfied:
\begin{enumerate}[i.]
\item All partitions in $\mathcal T(Q)$ are of type A or B, according to Definition \ref{casesdef}.
\item $k_t=t+1$ for every $t\in\{1,\ldots t_\max\}$, (or $r=2$).
\end{enumerate}
We say $\mathcal T(Q)$ (or, for short, $Q$) is \emph{weakly alternating} if (ii) is satisfied: this is equivalent to the $t$-th B/C hook begins  with $P_{t+1,t}$ for each $t\in\{1,\ldots, t_\max\}$.  This is also equivalent to $A_t=\{(k,\ell)\mid k=k_{t-1}, t\le \ell\le u-r\}$ for each such $t$.
R. Zhao showed in \cite{Z} that  $\mathcal T(Q)$ has alternating pattern when $u\gg r$. We give a proof using Theorem \ref{tablethm}.
\begin{corollary}\label{normalpatterncor}
Let $u>r\ge 2$. If $ Q=(u,u-r)$ is weakly alternating, then $u\ge\max\{2r-2, \frac{3r-1}{2}\}$ and $ Q'=(u+1,u+1-r)$ is weakly alternating as well.  If $u\ge {r+r^2/8}$ then $Q=(u,u-r)$ has alternating pattern.  
\end{corollary}
\begin{proof}
When $r=2$ then $\mathcal T(Q)$ consists of a single row of type A partitions, there are no B/C hooks, so condition (ii) is (vacuously) satisfied and $Q$ has alternating pattern. Also, $Q'=(u+1,u+1-r)$ has again $r'=(u+1)-(u+1-r)=2$, so $Q'$ also has alternating pattern.  We assume henceforth that $r\ge 3$. We note\footnote{We thank a referee for pointing this out, thus removing an unnecessary assumption $u\ge 3r/2$ from a previous statement of the Corollary.} that 
\begin{equation}\label{u3r2eq}
u\ge 3r/2,
\end{equation}
since when $r\ge 4 $ then $r+r^2/8\ge 3r/2$, and when $r=3$ and $u$ is an integer, each of $u\ge r+r^2/8$ and $u\ge 3r/2$ is equivalent to $u\ge 5$.\par
By definition $q_1=(u-r)$ so by \eqref{kalternative} $k_1=1+\lceil \frac{r-1}{u-r+1}\rceil$. Therefore $k_1=2$ is equivalent to $u-r+1\ge r-1$, so to $u\ge 2r-2$.\par We claim that  $k_{t_\max}=t_{max}+1$ is equivalent to $u\ge \frac{3r-1}{2}$. When $u>\frac{3r-1}{2}$ then $u-r>\lfloor \frac{r-1}{2}\rfloor$ and $k_{t_\max}=t_{max}+1$ is \eqref{tMaxcase2eq} in the proof of Lemma \ref{tableinvariantslem}. When $ u= \frac{3r-1}{2}$ the integer $r$ is odd, and we have by Lemma \ref{tableinvariantslem}(a) $k_{t_\max}= \lceil \frac{r+1}{2}\rceil= \lceil \frac{r-1}{2}\rceil+1=t_\max+1$. But when $u<\frac{3r-1}{2}$ we have
$u-r<\frac{r-1}{2}$ and conclude that
\begin{equation}\label{gapeq}
k_{t_\max}=\big\lceil\frac{r+1}{2}\big\rceil\ge u-r+2=t_\max+2.
\end{equation}This completes the proof of the claim, and the assertion $u\ge \max\{2r-2,\frac{3r-1}{2}\}$ when $Q$ is weakly alternating (the case $r=2$ being trivial).\par
 Since $q_t=\lceil\frac{u-r}{t}\rceil$ is nondecreasing in $u$ the equation \eqref{kalternative} shows that for a fixed pair $(t,r)$, the integer $k_t$ is nonincreasing as $u$ increases and becomes stable once it reaches $k_t=t+1$.  Note also that if $Q=(u,u-r)$ is weakly alternating then $u\ge\frac{3r-1}{2}$ and this implies ${t_\max}=\lfloor\frac{r-1}{2}\rfloor$ for both $Q$ and $Q'=(u+1,u+1-r)$. Thus (ii) for $Q$ implies (ii) for $Q'$. This finishes the proof of the statements about the weak-alternating property.\par
To show the statement about the alternating property, note that
Condition (i.) follows from Corollary \ref{CCor}. We now show Condition (ii.). As noted we may assume $r\ge 3$ and by \eqref{u3r2eq} we have $u-r\ge r/2$. It follows that $t_\max=\big\lfloor \frac{r-1}{2}\big\rfloor$. We have $q_t=\lceil\frac{u-r}{t}\rceil$ and $k_t=t+\lceil \frac{r-2t+1}{q_t+1}\rceil$. So we need to show that $q_t\ge r-2t$ for every $t\in \{ 1,2, \ldots, t_\max\}$. This holds when  $\frac{u-r}{t} \ge r-2t$, or
\begin{equation}\label{normalineq}
u\ge r+t(r-2t).
\end{equation}
The expression on the right side of \eqref{normalineq} has a maximum of $r+r^2/8$ at $t=r/4$. This proves that Condition (ii.) holds when $u\ge r+r^2/8$ and completes the proof of the Corollary.
\end{proof}\par\noindent
\subsection{Completeness of the table $\mathcal T(Q)$.}\label{tablecompletenesssec}
In this section we will prove Theorem~\ref{table2thm}, which asserts that $\mathcal T(Q)$ is all of $\mathfrak Q^{-1}(Q)$. This will complete the proof of the Table Theorem \ref{Zthm}. R. Zhao proved this for $u\gg r$ in \cite{Z}.
\begin{lemma}\label{Ccompletelem} Fix $Q=(u,u-r), u>r\ge 2$. All type C partitions $P$ that satisfy $\mathfrak Q(P)=Q$ and that are not of type A or B, occur in the table $\mathcal T(Q)$ of Definition \ref{tabledef}.
\end{lemma}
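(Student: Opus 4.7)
\textbf{Proof proposal for Lemma \ref{Ccompletelem}.}

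The plan is to parameterize type C (but not type A or B) partitions using Lemma~\ref{Clem}, and then show by an explicit comparison of normal forms that each such partition coincides with some $P_{k,\ell}$ in a set $C_t$ from Theorem~\ref{tablethm}(b). By Lemma~\ref{Clem}, any type C partition in $\mathfrak{Q}^{-1}(Q)$ that is not of type A or B can be written as $P_C=(a^{{\sf c}_1},(a-1)^{{\sf c}_2+s_2},(a-2)^{{\sf c}_1+s_1},(a-3)^{{\sf c}_2})$ for a sequence $C=({\sf c}_1,{\sf c}_2,s_1,s_2;a)$ with ${\sf c}_1,{\sf c}_2,s_1,s_2\geq 1$ and $a\geq 4$, and then $u,r$ are determined by \eqref{PCeqn}.

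Given such a $P_C$, I would set $t={\sf c}_1+{\sf c}_2-1$ and verify, via a short calculation using \eqref{PCeqn}, that $q_t=a-2$ and $d_t={\sf c}_2>0$. The range condition $0\le t<\min\{u-r,\lfloor(r-1)/2\rfloor\}$ follows since $u-r={\sf c}_1(a-2)+{\sf c}_2(a-3)\ge 2{\sf c}_1+{\sf c}_2>t$ (using $a\ge 4$) and $r\ge 2({\sf c}_1+{\sf c}_2)+s_1(a-2)+s_2(a-1)\ge 2({\sf c}_1+{\sf c}_2)+5$. Then I would substitute these values of $q_t,d_t$ into the expanded form
\[
P_{k,\ell}=\left((q_t+2)^{t+1-d_t},(q_t+1)^{d_t+n_1},q_t^{n_0},(q_t-1)^{d_t}\right)
\]
obtained in the proof of Theorem~\ref{tablethm}(b), and match coefficients with $P_C$. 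This forces $n_1=s_2\ge 1$, $n_0={\sf c}_1+s_1\ge 1$, and therefore
\[
k+\ell \;=\; t+1+d_t+n_0+n_1 \;=\; 2{\sf c}_1+2{\sf c}_2+s_1+s_2,
\]
which equals the number of parts of $P_C$, as expected.

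The main obstacle is to produce a concrete pair $(k,\ell)\in C_t$ achieving the above sum. The strategy is to compute $k_t-t$ and $c_t$ explicitly from Definition~\ref{tablenotationdef} using the formulas \eqref{PCeqn}. A direct manipulation gives
\[
\frac{u-t+d_t}{q_t+1}={\sf c}_1+{\sf c}_2+s_1+s_2+\frac{1-s_1}{a-1},\qquad \frac{u-2(t+1)+d_t}{q_t}={\sf c}_1+{\sf c}_2+s_1+s_2+\frac{s_2}{a-2},
\]
so that $k_t-t=({\sf c}_1+{\sf c}_2+s_1+s_2)+\lceil(1-s_1)/(a-1)\rceil$ and $c_t$ equals $\lceil s_2/(a-2)\rceil$ plus a non-negative correction coming from the first displayed expression. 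A short case split on whether $s_1\in\{1,\ldots,a-1\}$ or $s_1\ge a$, and on the size of $s_2$ modulo $a-2$, shows in every case that
\[
k_t+1 \;\le\; 2{\sf c}_1+2{\sf c}_2+s_1+s_2 \;\le\; k_t+c_t,
\]
so that a pair $(k,\ell)\in C_t$ with the required sum exists; the admissibility $1\le \ell\le u-r-t$ and $k\le r-1$ follows from the table dimension bound $t(P_C)\le\min\{2u/3,r\}$ in \eqref{partsboundCeq} combined with Lemma~\ref{tableinvariantslem}. For that $(k,\ell)$, the expanded formula above reproduces $P_C$ on the nose, completing the proof.
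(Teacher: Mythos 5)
Your proposal is correct and takes essentially the same approach as the paper's proof: parameterize the type~C, not type~A or~B, partitions via Lemma~\ref{Clem}, set $t={\sf c}_1+{\sf c}_2-1$, compute $q_t=a-2$ and $d_t={\sf c}_2$, derive $k_t-t$ and $c_t$ in terms of $({\sf c}_1,{\sf c}_2,s_1,s_2,a)$, and locate a pair $(k,\ell)\in C_t$ with $k+\ell=2{\sf c}_1+2{\sf c}_2+s_1+s_2$. The paper pins down explicit formulas for $k$ and $\ell$ whereas you argue existence from the inequality $k_t+1\le 2{\sf c}_1+2{\sf c}_2+s_1+s_2\le k_t+c_t$; note that the case split you anticipate is unnecessary, since the identity $\lceil(1-s_1)/(a-1)\rceil+\lfloor(s_1-1)/(a-1)\rfloor=0$ makes both bounds follow immediately from $s_1\ge 1$ and $\lceil s_2/(a-2)\rceil\ge 1$.
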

\begin{proof}
From Lemma \ref{Clem} we know that 
\begin{align}
\{P_C&:  C=({\sf c}_1,{\sf c}_2,s_1,s_2;a) \text { with } {\sf c}_1,{\sf c}_2,s_1,s_2\in \mathbb Z_{>0} \text { and } a\ge 4\} \text { where}\notag\\
P_C&=(a^{{\sf c}_1}, (a-1)^{{\sf c}_2+s_2}, (a-2)^{{\sf c}_1+s_1}, (a-3)^{{\sf c}_2})\label{eqsetPC}
\end{align}
is the set of all partitions that are of type C but not of type A or B.\par
Let $C=({\sf c}_1,{\sf c}_2,s_1,s_2;a)$ with $ {\sf c}_1,{\sf c}_2,s_1,s_2\in \mathbb Z_{>0}$ and $ a\ge 4$. Assume that $\mathfrak Q(P_C)=(u,u-r)$. 
We need to prove that the partition $P_C$ is in the table $\mathcal T(Q)$.  By Lemma \ref{Clem}  $\mathfrak Q(P_C)=(u,u-r)$ implies that
the following equalities hold:
\begin{align}
  u&=(a-2)({\sf c}_1+{\sf c}_2+s_1+s_2)+{\sf c}_2+s_2+2{\sf c}_1\label{eq:ua2}\\
 &=(a-1)({\sf c}_1+{\sf c}_2+s_1+s_2)+{\sf c}_1-s_1;\label{eq:ua1}\\
  u-r&=(a-2)({\sf c}_1+{\sf c}_2)-{\sf c}_2\label{eq:ur}.
\end{align}
We need to show that there exists $t\in\{1,\ldots t_\max\}$ and $(k,\ell)\in C_t$ where $C_t$ is as in Theorem~\ref{tablethm}(b), such that $P_{k,\ell}=P_C$.
Recall from  \eqref{tableCeq} in the proof of Theorem~\ref{tablethm} that if $(k,\ell)\in C_t$ then
\begin{equation}\label{tableCbeq}
P_{k,\ell}=((q_t+2)^{t-d_t},(q_t+1)^{d_t+n_1},q_t^{n_0}, (q_t-1)^{d_t})
\end{equation}
for certain positive integers $n_0$ and $n_1$. It follows from \eqref{eqsetPC} that for $P_{k,\ell}=P_C$ to hold we must have
\begin{equation}\label{eq:d}
t-d_t={\sf c}_1 \text { and } d_t={\sf c}_2.
\end{equation}
In particular, the unique candidate for $t$ is
\begin{equation}\label{eq:t}
t={\sf c}_1+{\sf c}_2.
\end{equation}
It follows from \eqref{PCeqn} and \eqref{eq:ur} that ${\sf c}_1+{\sf c}_2\le \min\{u-r,\lfloor\frac{r-1}{2}\rfloor\}$, whence $1\le t\le t_\max$. 
 From \eqref{eq:ua1},  \eqref{eq:ur},\eqref{tableCbeq} and \eqref{eq:t} we obtain that, by Definition \ref{tablenotationdef},
 \begin{align}
 q_t&=\Big\lceil \frac{u-r}{t}\Big\rceil=a-2; \text { and}\notag\\
 k_t&=\Big\lceil \frac{u+1-t+d_t}{q_t+1}\Big\rceil=\Big\lceil\frac{\left({\sf c}_1+{\sf c}_2+s_1+s_2\right)(a-1)+1-s_1}{a-1}\Big\rceil.\label{3.30eq}
 \end{align}
 Since $s_1 \geq 1$, we have $k_t\leq {\sf c}_1+{\sf c}_2+s_1+s_2$.
 
 On the other hand, since $d_t=\sl {\sf c}_2\ge 1$ equation \eqref{eq:ua2} and Definition \ref{tablenotationdef} yield that
 \begin{align}
 c_t&=\Big\lceil\frac{u-2t+d_t}{q_t}\Big\rceil\\
&={\sf c}_1+{\sf c}_2+s_1+s_2+\Big\lceil\frac{s_2}{a-2}\Big\rceil .\label{3.31eq}
 \end{align}
Since $s_2\ge 1$ we also have  $c_t> \left({\sf c}_1+{\sf c}_2+s_1+s_2\right)$.
This in particular implies that $k_t<c_t$. So the set $C_t$ is not empty. 

Let $k=\left({\sf c}_1+{\sf c}_2+s_1+s_2\right)$. Then $k_t\leq k<c_t$. Thus by definition of $C_t$, we have $(k, t)\in C_t.$ To complete the proof we will show that $P_{k,t}=P_C$.

Since $q_t=a-2$ and $k={\sf c}_1+{\sf c}_2+s_1+s_2$, using \ref{eq:ua2}, \ref{eq:d} and \ref{eq:ur}, we get
$$\begin{array}{rl}
u-2t-d_t(q_t-1)&=(a-2)k+s_2-{\sf c}_2-{\sf c}_2(a-3)\\
&=(a-2)(k-{\sf c}_2)+s_2\\
&=(a-2)(k-d_t)+s_2\\ \\ 
\mbox{and}\\ \\
u-r+2t&=(a-2)({\sf c}_1+ {\sf c}_2)-{\sf c}_2+2({\sf c}_1+{\sf c}_2)\\
&=a({\sf c}_1+ {\sf c}_2)-{\sf c}_2\\
&=at-{\sf c}_2.
\end{array}$$
This implies that 
$$\begin{array}{rl}
[u-2t-d_t(q_t-1)]^{k-d_t}&=\Big((a-1)^{s_2},(a-2)^{k-d_t-s_2}\Big)\\&=\Big((a-1)^{s_2},(a-2)^{s_1+{\sf c}_1}\Big)\\ 

\mbox{and}\\

[u-r+2t]^{t}&=\Big( a^{{\sf c}_1}, (a-1)^{{\sf c}_2}\Big).
\end{array}$$

Therefore $$\begin{array}{ll}P_{k, t}&=\Big([u-r+2t]^{t},[u-2t-d_t(q_t-1)]^{k-d_t}, (q_t-1)^{d_t}\Big)\\&=(a^{{\sf c}_1}, (a-1)^{{\sf c}_2+s_2}, (a-2)^{{\sf c}_1+s_1}, (a-3)^{{\sf c}_2})=P_C\end{array}.$$

 \end{proof}

We can now prove the completeness part of the Table Theorem \ref{Zthm}.
\begin{theorem}[Part II of Table Theorem]\label{table2thm} Let $Q=(u,u-r)$, $u> r\ge 2$. The table $\mathcal T(Q)$ of Definition \ref{tabledef} contains all the partitions in $\mathfrak{Q}^{-1}(Q)$.
\end{theorem}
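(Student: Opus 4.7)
The plan is to reduce to a case analysis on the type (A, B, or C) of $P \in \mathfrak Q^{-1}(Q)$. All type C partitions that are neither of type A nor of type B are already handled by Lemma~\ref{Ccompletelem}, so it will suffice to realize every type A partition as some $P_{k,\ell}$ with $(k,\ell) \in A_t$, and every type B partition that is not of type A as some $P_{k,\ell}$ with $(k,\ell) \in B_t$. Uniqueness of such a realization is automatic from Theorem~\ref{tablethm}(d), so the burden is on existence.

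For a type A partition $P = ([u]^m, [u-r]^j) \in \mathfrak Q^{-1}(Q)$, I would first use Equation~\eqref{cboteq} and $|U_{bottom}| \le u$ to force $m \le \lfloor r/2 \rfloor$. With the conventions $k_{-1} = 0$ and $k_{t_{\max}+1} = r$, I plan to verify from Lemma~\ref{tableinvariantslem}(b)--(c) that the consecutive blocks $\{k_{t-1}-t+1, \ldots, k_t - t - 1\}$, $t = 0, 1, \ldots, t_{\max}+1$, partition $\{1, 2, \ldots, \lfloor r/2 \rfloor\}$. Let $t$ be the unique index whose block contains $m$; I would then set $k := m + t$ and $\ell := j - t$. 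By construction $k_{t-1} < k < k_t$ and $\ell \le u - r - t$, so checking $(k,\ell) \in A_t$ reduces to $\ell \ge 1$, i.e.\ $j \ge t + 1$. When the two almost rectangular pieces of $P$ are non-adjacent this is immediate; when they are adjacent, I would use Equation~\eqref{cmideq} and $|U_{middle}| \le u$, expanding in terms of the invariants $q_t, d_t$ of Definition~\ref{tablenotationdef}, to simplify precisely to $j \ge t + 1$.

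For a type B partition $P = ([u-r+2i]^i, [u-2i]^s) \in \mathfrak Q^{-1}(Q)$ not of type A, Equation~\eqref{ctopeq} gives $i \le \lfloor r/2 \rfloor$, so I set $t := i - 1 \in \{0, \ldots, t_{\max}\}$ and define
\[
(k, \ell) := \begin{cases} (k_t,\, i + s - k_t) & \text{if } i + s - k_t \le u - r - t, \\ (i + s - (u - r - t),\, u - r - t) & \text{otherwise.}\end{cases}
\]
I would then check that $P$ equals the $B_t$-formula $P_{k, \ell}$ of Theorem~\ref{tablethm}(c) by matching $t + 1 = i$ and $k - t + \ell - 1 = s$, and that $1 \le k \le r - 1$. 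The strict inequalities $c_t < \ell$ (horizontal case) and $\ell > c_t - (k - k_t)$ (vertical case) that separate $B_t$ from $C_t$ should hold automatically: if they failed, $(k, \ell)$ would lie in $C_t$, but then the type C formula from Theorem~\ref{tablethm}(b) would produce a partition with three distinct part sizes in its lower portion (the exponents $d_t + n_1$, $n_0$, $d_t$ all positive when $d_t > 0$), contradicting the almost-rectangularity of the lower piece $[u - 2i]^s$ of $P$.

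The main obstacle will be the adjacency analysis in the type A case: essentially running the adjacent subcases of the proof of Theorem~\ref{tablethm}(a) in reverse to extract the clean bound $j \ge t + 1$, handling $d_t = 0$ and $d_t > 0$ separately. Once this is in place, each $P \in \mathfrak Q^{-1}(Q)$ will be matched with its unique table entry and the proof will be complete.
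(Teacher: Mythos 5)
Your decomposition into types A, B, C and the reduction of type C to Lemma~\ref{Ccompletelem} matches the paper exactly, and the type A sketch (determine $t$ from $m$ via the blocks $\{k_{t-1}-t+1,\ldots,k_t-t-1\}$ and then verify $\ell\ge 1$) is essentially the paper's Claim A --- although the paper's adjacency analysis starts from $k_{t-1}-(t-1)\le m$ and therefore expands in $q_{t-1},d_{t-1}$, not $q_t,d_t$ as you suggest.

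The real gap is in the type B step. You construct $(k,\ell)$ so that the type B formula of Theorem~\ref{tablethm}(c) reproduces $P$, and then argue that $(k,\ell)\notin C_t$ because otherwise ``the type C formula from Theorem~\ref{tablethm}(b) would produce a partition with three distinct part sizes in its lower portion \ldots contradicting the almost-rectangularity of the lower piece of $P$.'' But $P$ was built to equal the \emph{type B formula} at $(k,\ell)$, not the type C formula; when $(k,\ell)\in C_t$ these are two distinct partitions, and your observation only shows $P$ is not the type C one. That is not a contradiction: it is perfectly consistent with $P$ being an ``extra'' partition outside the table, which is exactly what the theorem must rule out. A contradiction would follow if you showed that the type B formula evaluated at a position in $C_t$ does \emph{not} lie in $\mathfrak Q^{-1}(Q)$ (cf.~Example~\ref{12,3ex}: at $(5,1)$ the type B formula gives $(4,3,2,2,2,2)$, whose $\mathfrak Q$-image is $(13,2)$, not $(12,3)$), but nothing you wrote establishes this. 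The paper instead proves the needed strict inequality $n_b+n_{b-1}+1>c_t+(k_t-t)$ directly --- its display~\eqref{3.48tris} in Claim B --- by case analysis on $d_t=0$ versus $d_t>0$ and on $b\le a-3$ versus $b=a-2$, driven by the type B hypothesis $an_a+(a-1)n_{a-1}<u$. Two smaller issues in the same step: the bound $i\le\lfloor r/2\rfloor$ is too weak to place $t=i-1$ in $\{0,\ldots,t_{\max}\}$ when $r$ is even (the $U$-chain estimate is the strict $u-r+2i<u$, giving $i\le\lfloor(r-1)/2\rfloor$), and you also need $i\le u-r$ as well as $\ell\ge 1$ in your horizontal subcase; the latter is yet another consequence of the very inequality you have not established.
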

\begin{proof}  Lemma \ref{Ccompletelem} shows the completeness for type C partitions. We next consider type A and then type B partitions.\vskip 0.2cm
{\bf Claim A}: Let $P = (a^{n_a}, (a-1)^{n_{a-1}}, b^{n_b}, (b-1)^{n_{b-1}})$ be a partition as in \eqref{Pabeq}. In particular $a-2\ge b\ge 1, n_a>0,n_b>0$ and $n_{b-1}=0$ if $b=1$. Suppose that ${\mathfrak Q}(P) = (u,u-r)$ and that $P$ is of type A.
 Put
\begin{equation}\label{definetkleq}
\begin{array}{ll}
k&= n_a+n_{a-1}; \\ 
\ell &= n_b+n_{b-1};\\
T&=\min\{u-r, \lfloor\frac{r-1}{2}\rfloor\}+1=t_\max+1;\\
t &= \left\{\begin{array}{ll} \text { the unique $t\in\{ 1,\ldots, t_\max\},$ satisfying}& k_{t-1}\le k<k_t \text { if }k<k_{t_\max}\\ \\
T&\qquad\qquad\qquad\mbox{                    if }  k\ge k_{t_\max}.\end{array}\right.
\end{array}
\end{equation}

Then $(k,\ell) \in A_t$ and $P_{k,\ell} = P$.
\bigskip

\noindent {\it Proof of Claim A.}
By Definition \ref{casesdef} we have 
\begin{align}
u&=an_a+(a-1)n_{a-1},\label{3.45bis} \\
u-r&=bn_b+(b-1)n_{b-1}.\label{3.46bis}
\end{align}\noindent
This in particular gives $P=([u]^k,[u-r]^\ell)$, where
\begin{equation}\label{Pstdeq}
 [u]^k=(a^{n_a}, (a-1)^{n_{a-1}}) \text{ and }[u-r]^{\ell}=(b^{n_b}, (b-1)^{n_{b-1}}).
 \end{equation}\noindent
Since $P$ is of type A we must also have $|C_b|\le u$, so
 $$bn_b+(b-1)n_{b-1}+2(n_a+n_{a-1})\le u.$$ 
 Therefore by \eqref{3.46bis} we have 
\begin{equation}\label{kr2eq}
k=n_a+n_{a-1}\le \frac{r}{2}.
\end{equation} 
\vskip 0.2cm\noindent
$\,$ {\bf Case i}.  Assume $k=n_a+n_{a-1}\ge k_{t_\max}$.  We use Lemma \ref{tableinvariantslem}(a). If $k_{t_\max}=\lceil \frac{r+1}{2}\rceil$ then we have from \eqref{kr2eq}
 $$\big\lceil\frac{r+1}{2}\big\rceil=k_{t_\max}\le k=n_a+n_{a-1}\le  \frac{r}{2},$$
 a contradiction. So $r$ is even (we write $r=2r'$), $k_{t_\max}=r'$ and $u-r> \lfloor\frac{r-1}{2}\rfloor=r'-1 $. Then by the definition of $t_\max$ we have $t_\max=r'-1$ and $T=r'$.
 From \eqref{kr2eq} we obtain that
 \begin{equation}
 k_{t_\max}=r'\le k\le r'=r-T.
 \end{equation}
Evidently, from \eqref{Pstdeq} $\ell=n_b+n_{b-1}\le u-r$.  We need to show $T\le \ell$.  Assume by way of contradiction that $T>\ell$, that is, $\ell\le r'-1$. Since $k=r', 2r'=r$ and $\lceil \frac{u}{k}\rceil=a$ we know that $\lceil \frac{u-r}{k}\rceil=a-2$ and therefore that $\lceil \frac{u-r}{\ell}\rceil\ge a-2$. If  $\lceil \frac{u-r}{\ell}\rceil > a-2$ then we have a contradiction with \eqref{Pstdeq}, so we may assume that 
$\lceil \frac{u-r}{\ell}\rceil =a-2$. Since $[u]^k=[u]^{r'}=\left(a^{n_a},(a-1)^{n_{a-1}}\right)$ we know that $[u-r]^{r'}=[u-2r']^{r'}=\left((a-2)^{n_a},(a-3)^{n_{a-1}}\right)$. Consequently, since $\ell\le r'-1$, $[u-r]^{\ell}=((a-2)^{n_a+s},(a-3)^{\ell-n_a-s})$ with $s>0$, and, by \eqref{Pstdeq} $b=a-2$ and $n_b=n_a+s$. It follows that $|U_{middle}|=|C_{a-1}|$ satisfies
\begin{align*}
|C_{a-1}|&=n_{a-1}(a-1)+n_b(a-2)+2n_a\\
&=n_a(a)+n_{a-1}(a-1)+s(a-2)=u+s(a-2)>u \text { by \eqref{3.45bis} },
\end{align*}
which contradicts that the largest part of $\mathfrak Q(P)$ is $u$.

We have shown $T\le \ell$.  
By the definition of $A_T$ in Theorem \ref{tablethm} this shows that when $n_a+n_{a-1}\ge k_{t_\max}$, then $(k, \ell)\in A_T$ and $P_{k,\ell} = P$, as claimed. This completes the proof of Claim A, case i. 
\vskip 0.2 cm
$\,$ {\bf Case ii}.
Now assume that $k=n_a+n_{a-1}<k_{t_\max}$.  Let $t$ be the unique integer such that $1\le t\le t_\max$ and $k_{t-1}\le k<k_t$. We have to show that $\ell\ge t$. To lighten notation in the rest of the proof,  we set $\tau=t-1$. So $0\le \tau\le t_\max -1 $ and $k_\tau\le k<k_{\tau+1}$; we need to show $\ell\ge \tau+1$.  If $\tau=0$ then $\ell\ge \tau+1 $ is true (vacuously). So we assume $\tau\ge 1$. To obtain a contradiction we assume that  $\ell \le \tau $. \par
We have by Definition \ref{tablenotationdef} and Equation \eqref{kalternative}
\begin{equation*}
k_\tau =\Big \lceil \frac{u+1-\tau+d_\tau}{q_\tau+1} \Big\rceil=\tau+\Big \lceil \frac{r-2\tau+1}{q_\tau+1} \Big\rceil.
\end{equation*}
Thus  \begin{equation} k_\tau(q_\tau+2)=k_\tau(q_\tau+1)+k_\tau\geq u+1-\tau+d_\tau+\tau>u. \end{equation}

Therefore $\lfloor \frac{u}{k}\rfloor\le \frac{u}{k_\tau} <q_\tau+2$, and consequently $\lfloor\frac{u}{k}\rfloor\leq q_\tau+1.$
 In particular we have
\begin{equation}\label{3.51eq}
[u]^{k}=(a^{n_a},(a-1)^{n_{a-1}}), \text { where } a-1=\big\lfloor{\frac{u}{k}}\big\rfloor \le q_\tau+1. 
\end{equation}
By \eqref{qtdteq} we have $[u-r]^{\tau}=\Big(q_\tau^{\tau-d_\tau}, (q_\tau-1)^{d_\tau}\Big).$
\\
Since $\ell\le \tau$ it follows that for the largest part $b$ of $[u-r]^\ell$ we have $b\ge q_\tau$. If $b>q_\tau$ then we have a contradiction with \eqref{Pstdeq}, so we assume that $b=\lceil \frac{u-r}{\ell}\rceil=q_\tau$. Since $a-b\ge 2$ this implies with 
\eqref{3.51eq} that $a=b+2$.
Since $\ell\le \tau$,
$$\begin{array}{ll}
 \tau-d_\tau&\le n_b \\
k\ge k_\tau&=\Big \lceil \frac{u+1-\tau+d_\tau}{b+1}\Big\rceil\ge\Big \lceil \frac{u+1-n_b}{b+1}\Big\rceil.
\end{array}$$
Since the largest part of $\mathfrak Q(P)$ is $u$ we know that the U-chain $C_{a-1}$ has length at most $u$, that is
$$(a-1)n_{a-1}+(a-2)n_b+2n_a\le u.$$
This by \eqref{3.45bis} implies that $n_b \le n_a$. So we have 

$$k\ge k_\tau\ge\Big \lceil \frac{u-n_b+1}{b+1}\Big\rceil\geq  \frac{u-n_b+1}{b+1}>\frac{u-n_a}{a-1}=n_a+n_{a-1}=k,$$
a contradiction. This completes the proof of Claim~A.
\par\vskip 0.2cm
{\bf Claim B}: Let $P = (a^{n_a}, (a-1)^{n_{a-1}}, b^{n_b}, (b-1)^{n_{b-1}})$ be a partition as in \eqref{Pabeq} such that $\mathfrak Q(P) = (u,u-r)$ and suppose that $P$ is of type B but not of type A. 
Recall, $n_a>0$ and $n_b>0$.
\\

(i) If $b=a-2$ and $n_{b-1}=0$ then put 
$$\begin{array}{ll}
t&=n_a,\\ 
k&=n_{a-1}+n_b \mbox{ and}\\
\ell&=t.
\end{array}$$
\medskip

(ii) If $b<a-2$ or $n_{b-1}\not= 0$ then put
$$\begin{array}{ll}
t&=n_a+n_{a-1},\\
k&=\min\{n_{b}+n_{b-1}, r-t\}, \mbox{ and}\\
\ell&=t+n_b+n_{b-1}-k.
\end{array}$$

Then $(k,\ell) \in B_t$ and $P_{k,\ell} = P$.

\bigskip

\noindent {\it Proof of Claim B.}\par\noindent
$\,${\bf Case i.}
If $b=a-2$ and $n_{b-1}=0$, then we can write $$P=(a^{n_a}, (a-1)^{n_{a-1}}, (a-2)^{n_{a-2}}).$$
where $P$ being of type B, but not of type A, is equivalent to  $n_{a-2}>n_a$. We have $a\ge 3$ (since $b=a-2> 0$)  and
\begin{align} u&=n_{a-1}(a-1)+n_{a-2}(a-2)+2n_a;\notag\\
u-r&=(a-2)n_a;\notag\\
r&=n_{a-1}(a-1)+n_{a-2}(a-2)-(a-4)n_a.\label{suiteeqn}
\end{align}
Evidently $t=n_a\le u-r$. We have $r\ge (a-2)(n_{a-2}-n_a)+2n_a$, and since $n_{a-2}>n_a$ it follows that $r>2n_a=2t$, whence $1\le t\le \min\{u-r,\lfloor\frac{r-1}{2}\rfloor\}$. Since $[u-r]^t=(a-2)^{t}$  Definition \ref{tablenotationdef} tells us that $q_t=a-2, d_t=0, c_t=k_t$ and since $q_t+1=a-1$ 
\begin{align}
k_t&=\Big\lceil \frac{u+1-t+d_t}{q_t+1}\Big\rceil=n_{a-1}+\Big\lceil\frac{1}{a-1}\left( (a-2)n_{a-2}+n_a+1\right)\Big\rceil\notag\\
&\le n_{a-1}+n_{a-2}=n_{a-1}+n_b.
\end{align}
Using that $a\ge3$, that $n_{a-2}>n_a$, and \eqref{suiteeqn} it is straightforward to show that $n_{a-1}+n_{a-2}\le r-t$. Consequently,
\begin{equation}
c_t  =k_t\le k\le r-t.
\end{equation}
Therefore $(k,\ell)$ is an element of  (the ``vertical portion'' of) $B_t$ from Theorem \ref{tablethm}, and one immediately checks that $P_{k,\ell}=P$.\vskip 0.2cm\noindent
$\,${\bf Case ii}.  Since by assumption $n_{b-1}=0$ if $b=1$, we have $a\geq 4$.  
\medskip
 We first show that the integer $t$ defined in case (ii) above satisfies $$1\leq t \leq  \min\big\{u-r, \big\lfloor\frac{r-1}{2}\big\rfloor\big\}.$$ 

Since $n_a+n_{a-1}>0$, it is obvious that $1\leq t$. 
On the other hand, by assumption $P$ is of type B but not of type A. Thus 
\begin{equation}\label{longbottom}
an_a+(a-1)n_{a-1}<u=bn_b+(b-1)n_{b-1}+2(n_a+n_{a-1}),
\end{equation}
and 
\begin{equation}\label{shorttop}
u-r=(a-2)n_a+(a-3)n_{a-1}.
\end{equation}

Thus by \eqref{longbottom} and \eqref{shorttop} we have
 \begin{align*}
 r&=u-(u-r)>2(n_a+n_{a-1})=2t, \text { so }\\
  t&\le \lfloor\frac{r-1}{2}\rfloor.
  \end{align*}
 Additionally, since $a-3\geq 1$, from (\ref{shorttop}) we also get  $$u-r\geq (a-3)(n_a+n_{a-1})\geq n_a+n_{a-1}.$$  
\noindent
So $1\leq t \leq  \min\{u-r, \lfloor\frac{r-1}{2}\rfloor\}$  as desired. 
\medskip

Next, we  show that $(k, \ell)\in B_t$ and $P_{k,\ell}=P$.  
Using \eqref{longbottom} and \eqref{shorttop}  we find that 
\begin{equation}\label{typebinvs}
\begin{array}{ll}
q_t&=\lceil\frac{u-r}{n_a+n_{a-1}}\rceil=a-2\\ \\
d_t&=n_{a-1}\\ \\
k_t&=\lceil\frac{u-n_a+1}{a-1}\rceil\\
\\
c_t&=\left\{
\begin{array}{lll}
\lceil\frac{bn_b+(b-1)n_{b-1}+n_{a}+1}{a-1}\rceil&&\mbox{ if } n_{a-1}=0\\ \\
\lceil\frac{bn_b+(b-1)n_{b-1}+n_{a-1}}{a-2}\rceil&&\mbox{ if } n_{a-1}>0.

\end{array}
\right.
\end{array}
\end{equation}
\noindent $\quad$
{\bf Case ii.1.} Suppose that $n_b+n_{b-1}\leq r-t$. Then by the definition of $k$ and $\ell$ in (ii), $k=n_b+n_{b-1}\leq r-t$ and $\ell=t$. In order to show that $(k,\ell)\in B_t$ in this case it is enough to show that $c_t\leq k$. 
\begin{quote}
{\bf Case ii.1.1.} Assume that $n_{a-1}=0$. Then $t=n_a$ and $c_t=k_t=\lceil\frac{bn_b+(b-1)n_{b-1}+n_{a}+1}{a-1}\rceil.$ By (\ref{longbottom}) we have $(a-2)n_a< bn_b+(b-1)n_{b-1}$. Therefore
$$\begin{array}{ll}
\frac{bn_b+(b-1)n_{b-1}+n_{a}+1}{a-1}&\leq \frac{bn_b+(b-1)n_{b-1}+\frac{bn_b+(b-1)n_{b-1}}{a-2}+1}{a-1}\\ \\
&=\frac{bn_b+(b-1)n_{b-1}}{a-2}+\frac{1}{a-1}\\ \\
&=\frac{b}{a-2}(n_b+n_{b-1})-\frac{n_{b-1}}{a-2}+\frac{1}{a-1}\\ \\
&=n_b+n_{b-1}-\frac{a-2-b}{a-2}(n_b+n_{b-1})-\frac{n_{b-1}}{a-2}+\frac{1}{a-1}\\ \\ 
&<n_b+n_{b-1}-\frac{a-2-b+n_{b-1}}{a-2}+\frac{1}{a-2}.\\\\
\end{array}$$

Using the assumption that $b<a-2$ or $n_{b-1}\neq 0$ we get $a-2-b+n_{b-1}\geq 1$. Thus, $\frac{bn_b+(b-1)n_{b-1}+n_{a}+1}{a-1}\leq n_b+n_{b-1}$.
 Since in this case $c_t=\lceil\frac{bn_b+(b-1)n_{b-1}+n_{a}+1}{a-1}\rceil$, this implies $c_t\leq k$, as desired. It is straightforward to check that $P_{k,\ell}=P$.

\end{quote}

\begin{quote}
{\bf Case ii.1.2.} Now suppose that $n_{a-1}\neq 0$. Then $t=n_a+n_{a-1}$ and \large $c_t=\lceil\frac{bn_b+(b-1)n_{b-1}+n_{a-1}}{a-2}\rceil.$\normalsize

By (\ref{longbottom}) we have 
$$(a-3)(n_a+n_{a-1}) \leq (a-3)(n_a+n_{a-1})+n_{a}<bn_b+(b-1)n_{b-1}\leq b(n_b+n_{b-1}).$$

 Therefore
$$
\begin{array}{ll}
\frac{bn_b+(b-1)n_{b-1}+n_{a-1}}{a-2}&=\frac{b(n_b+n_{b-1})-n_{b-1}+(n_{a}+n_{a-1})-n_a}{a-2}\\ \\
&<\frac{b(n_b+n_{b-1})+\frac{b}{a-3}(n_b+n_{b-1})-n_{b-1}-n_a}{a-2}\\ \\

& = \frac{b}{a-3}(n_b+n_{b-1})-\frac{n_a+n_{b-1}}{a-2}.
\end{array}
$$
\normalsize
If $b\leq a-3$, then it follows that $$c_t=\lceil \frac{bn_b+(b-1)n_{b-1}+n_{a-1}}{a-2}\rceil \leq n_b+n_{b-1}=k$$ as desired.

On the other hand, if $b=a-2$, then $P=\Big(a^{n_a}, (a-1)^{n_{a-1}}, (a-2)^{n_b}, (a-3)^{n_{b-1}}\Big)$. Since by assumption $P$ is of type B, and $n_{b-1}\not=0$ we in particular have $$(a-1)n_{a-1}+ (a-2){n_b}+2n_a\leq (a-2)n_b+(a-3)n_{b-1}+2(n_a+n_{a-1}).$$ Therefore $n_{a-1}\leq n_{b-1}$. We have 
\large

$$
\begin{array}{ll}
\frac{bn_b+(b-1)n_{b-1}+n_{a-1}}{a-2}&\leq \frac{b(n_b+n_{b-1})}{a-2}\\ \\
&= \frac{(a-2)(n_b+n_{b-1})}{a-2}=n_b+n_{b-1}.
\end{array}
$$
\normalsize

And we again obtain the desired inequality $c_t\leq k$. Using \eqref{longbottom}, \eqref{shorttop}, $\ell=t=n_a+n_{a-1}$ and $k=n_b+n_{b-1}$ it is straightforward to check that $P_{k,\ell}=P$.

\end{quote}

 \noindent
 $\quad${\bf Case ii.2.} Now suppose that $n_b+n_{b-1}>r-t$. Then by definition of $k$ and $\ell$ in (ii),we have $k=r-t$ and $\ell=n_b+n_{b-1}-r+2t.$ So in order to prove that $(k,\ell)\in B_{t}$ in this case, we need to show that $t< \ell\leq u-r$.
 
 Since $n_b+n_{b-1}>r-t$ it is obvious that $t<\ell.$ On the other hand, since $t=n_a+n_{a-1}$, using (\ref{longbottom}) we get $u=bn_b+(b-1)n_{b-1}+2t$.  If $b\geq 2$, then $bn_b+(b-1)n_{b-1}\geq n_b+n_{b-1}$. On the other hand, if $b=1$ then $n_{b-1}=0$, and therefore $bn_b+(b-1)n_{b-1}=n_b=n_b+n_{b-1}$. So in any case, $u\geq n_b+n_{b-1}+2t.$
Therefore, $\ell=n_b+n_{b-1}-r+2t\leq u-r$, as desired.
This shows that $(k,\ell)\in B_t$. The equations \eqref{longbottom}, \eqref{shorttop}, $t=n_a+n_{a-1},k=r-t$, and $\ell=n_b+n_{b-1}-r+2t$ imply that $P_{k,\ell}=P$. This
completes the proof of Claim~B and of Theorem \ref{table2thm}.\vskip 0.2cm
 \end{proof}
\section{The Box Conjecture.}\label{boxdiagsec}
We first recall P. Oblak's Recursive Conjecture for $\mathfrak Q(P)$  and summarize results about it in Section~\ref{recursiveconjsec}.  In Section~\ref{boxconjsubsec} we state a Box Conjecture for $\mathfrak Q^{-1}(Q)$ which is a generalization of Theorems \ref{tablethm} and \ref{table2thm}. 

 In
 Section~\ref{boxspecialsec} we prove the analog of Theorem \ref{tablethm} in the special case that $Q=(u+s,u,u-r), $ with $r\ge 2$ and $2\le s\le 4$.

\subsection{Recursive Conjecture for ${\mathfrak Q}(P)$.}\label{recursiveconjsec}
 P. Oblak conjectured a recursive process for determining $\mathfrak Q(P)$ from $P$ that greatly influenced further work in the area (\cite{BIK,BKO,IK,Kh1,KO}). 
 \par

Recall from Definition \ref{Udef} that a U-chain $C_a$ of $\mathcal D_P$ is comprised of three parts: first a chain through all the vertices in the rows of length $a,a-1$, corresponding to the almost rectangular subpartition $(a^{n_a}, (a-1)^{n_{a-1}})$ of $P$;  then two chains linking those rows to the source and to the sink in the top row of $\mathcal D_P$.
Recall also that the length $|C_a|$ satisfies equation \eqref{Caeq}:
$|C_a|=an_a+(a-1)n_{a-1}+2\sum_{i>a} n_i.$\par

Given a partition $P$ of $n$ and an integer $a\in S_P$ we denote by $P'(P,a)$ the unique partition of $(n-|C_a|)$ obtained by omitting the vertices of  the chain $C_a$ from $\mathcal D_P$ and counting the vertices left 
 in each row. When $P=(\cdots ,i^{n_i},\cdots )$ we have that $P'(P,a)= (\cdots , i^{n'_i},\cdots )$ where the multiplicity integers $n'_i$ satisfy
 \begin{equation}
 n'_i=\begin{cases}&n_i \text { if } i\le  a-2\\
 &n_{i+2} \text { if } i\ge a-1. 
 \end{cases}
 \end{equation}\noindent
 {\it Notation:} We will write $P\vdash n$ for ``the set $P$ of positive integers is a partition of $n$.''\par
 For example, when $P=(7,5,4,3,3,2,1)\vdash 25$ and $a=3$ then $P'(P,3)=(5,3,2,1)\vdash 11$.
 The poset $\mathcal D_{P'}$ is in general not a subposet of $\mathcal D_P$ \cite{BIK,IK,Kh1}. The following recursive process constructs a 
partition $Ob(P)$ of $n$ from a given partition $P$ of $n$.
\begin{definition}[P. Oblak's recursive process]\label{Oblakrecursivedef} Suppose $P$ is a partition of $n$. Let $C_a$ be a U-chain in $\mathcal D_P$ of maximum length, and suppose that $Ob(P')$ where $ P'=P'(P,a)$ has been chosen. Then we set $Ob(P)=(|C_a|,Ob(P'))$. When $P$ is almost rectangular we take $Ob(P)=(n)$.
\end{definition}
As just stated the partition $Ob(P)$ is a priori not well defined, since there may be different possibilities for the choices of maximum length U-chains. Originally, P.~Oblak chose the largest integer $a$ giving a maximum length U-chain in each  step \cite{BKO}. \par
  Several authors associate a partition $\lambda(\mathcal P)$ to any finite poset $\mathcal P$ by first setting $c_i$ equal to the maximum number of vertices covered by $i$ chains of $\mathcal P$, then setting $\lambda_i(\mathcal P)=c_i-c_{i-1}$, with $c_0=0$ (see \cite{BrFo,Gans,Gre,GreKl,Pol, Sak}). For the poset $\mathcal D_P$  the second author defined the partition $\lambda_U(P)=\lambda_U(\mathcal D_P)$ using U-chains in a similar way, setting $c_{i,U}(\mathcal D_P) $ equal to the maximum number of vertices covered by $i$ U-chains, then setting $\lambda_{i,U}(P)=c_{i,U}(\mathcal D_P)- c_{i-1,U}(\mathcal D_P)$. She then showed:
 \begin{theorem}(\cite[Theorem 2.5]{Kh1})\label{indthm} The partition $Ob(P)$ is independent of the choices of maximum length U-chains in Definition \ref{Oblakrecursivedef}, and is equal to $\lambda_U(P)$.
 \end{theorem}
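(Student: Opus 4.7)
The plan is to adapt the classical Greene--Kleitman philosophy (a partition is determined by the numbers $c_i$ of vertices covered by $i$ optimally chosen chains) to the U-chain setting. The key recursion to establish is
\[
c_{i+1,U}(\mathcal D_P) \;=\; c_{i,U}(\mathcal D_{P'}) + |C_a|
\]
for every maximum-length U-chain $C_a$ of $\mathcal D_P$, where $P' = P'(P,a)$. Once this is proved, iterating and taking first differences yields $\lambda_U(P) = (|C_a|, \lambda_U(P'))$, which is precisely the recipe defining $Ob(P)$. Independence of the choices in Definition \ref{Oblakrecursivedef} is then automatic, since $\lambda_U(P)$ is manifestly choice-free. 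The induction base case (almost rectangular $P$) is trivial because $\mathcal D_P$ itself is a single U-chain, so $\lambda_U(P) = (n) = Ob(P)$.

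For the inequality $c_{i+1,U}(\mathcal D_P) \geq c_{i,U}(\mathcal D_{P'}) + |C_a|$, I would take $i$ disjoint U-chains in $\mathcal D_{P'}$ covering $c_{i,U}(\mathcal D_{P'})$ vertices and lift each to a U-chain of the same length in $\mathcal D_P$, using the explicit row-by-row description of $P'$: rows of $P'$ of length $\leq a-2$ are identical to the corresponding rows of $P$, while rows of length $\geq a-1$ arise from rows of $P$ of length two larger, with exactly two vertices deleted (those traversed by $C_a$ in its top and bottom tails). The lift is then canonical, and adjoining $C_a$ produces the required family of $i+1$ disjoint U-chains.

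The main obstacle is the opposite inequality $c_{i+1,U}(\mathcal D_P) \leq c_{i,U}(\mathcal D_{P'}) + |C_a|$, because $\mathcal D_{P'}$ is not a subposet of $\mathcal D_P$ and so a family of U-chains in $\mathcal D_P$ cannot be directly restricted. Starting from an optimal family $\{C_{a_0}', \ldots, C_{a_i}'\}$ in $\mathcal D_P$, I would perform a swap argument to replace one member by $C_a$ while maintaining total length and disjointness: if some $C_{a_j}'$ meets $C_a$ nontrivially, exchange the portions along the common rows of length $a$ and $a-1$ so that one of the resulting chains coincides with $C_a$; repeat until $C_a$ itself appears in the family. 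Throughout, one must check that each modified chain remains a U-chain in the sense of Definition \ref{Udef}, which requires careful bookkeeping of the source and sink tails (items (ii) and (iii) there), and that the resulting total vertex count is preserved by invoking \eqref{Caeq} and the maximality of $|C_a|$. Once $C_a$ is in the family, deleting it leaves $i$ chains on the vertex set $\mathcal D_P \setminus C_a$, and a final check identifies these as a disjoint union of U-chains of $\mathcal D_{P'}$ via the canonical bijection between the vertex sets.

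The delicate technical point I expect to be most difficult is the swap step: the exchange is easy along the almost rectangular block of rows of length $a$ and $a-1$, but controlling it through the top tail (descending from $(1,p_1,1)$ to $(1,a,1)$) and bottom tail (ascending from $(a,a,n_a)$ to the sink) requires showing that if another U-chain shares those tails, one can always redirect it without loss of length. A convenient way to organize this is to argue directly with Theorem \ref{indexthm} applied to successive sub-posets and a careful pigeonhole based on the $sl_2$-weight grading $\varrho$ of Section \ref{gradingUBsec}, which stratifies $\mathcal D_P$ into antichains that any U-chain must meet in a controlled fashion.
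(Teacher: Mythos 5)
Your overall strategy --- establishing the recursion $c_{i+1,U}(\mathcal D_P) = c_{i,U}(\mathcal D_{P'}) + |C_a|$ for every maximum-length U-chain $C_a$ and iterating --- is the right skeleton, and the paper itself cites \cite[Theorem 2.5]{Kh1} for this result rather than reproving it. However, the $(\ge)$ direction, which you present as the easy half with a ``canonical lift,'' has a genuine gap. The natural lift is neither canonical nor, as you claim, length-preserving. For a U-chain $C'_b$ in $\mathcal D_{P'}$ with $b\le a-2$, the lift $C_b$ in $\mathcal D_P$ has $|C_b| = |C'_b| + 2(n_a+n_{a-1})$ by \eqref{Caeq}, and more importantly $|C_b\setminus C_a| = |C'_b| - 2\sum_{i\ge a+1}n_i$, which is \emph{strictly less} than $|C'_b|$ whenever $P$ has rows longer than $a$. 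The missed vertices are the tail vertices of $C'_b$ in those $P'$-rows that arise from $P$-rows of length $\ge a+1$: under the natural identification of $\mathcal D_P\setminus C_a$ with $\mathcal D_{P'}$, they sit at the second and second-to-last positions of those $P$-rows, positions which the tail of a U-chain in $\mathcal D_P$ never visits. A concrete example: take $P=(7,5,3)$, $a=5$, so $P'=(5,3)$. Both $C'_3$ and $C'_5$ realize $c_{1,U}(\mathcal D_{P'})=5$, but the lift $C_3$ of $C'_3$ contributes only $3$ vertices to $\mathcal D_P\setminus C_5$, whereas the lift $C_7$ of $C'_5$ contributes $5$. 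So one must prove that \emph{some} family of $i$ U-chains in $\mathcal D_{P'}$ both achieves $c_{i,U}(\mathcal D_{P'})$ and lifts well --- this is a genuine claim requiring proof, not automatic.

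By contrast, the $(\le)$ direction is cleaner than you suggest \emph{once the swap is done}: for any U-chain $C_b$ in $\mathcal D_P$, a short case analysis (on $b\le a-2$, $b\in\{a-1,a\}$, $b\ge a+1$) shows that $C_b\setminus C_a$ is contained in a single U-chain of $\mathcal D_{P'}$, because the lost tail vertices move in the opposite direction from the $(\ge)$ case. So if an optimal $(i+1)$-family of U-chains in $\mathcal D_P$ contains $C_a$, restricting the other $i$ chains to $\mathcal D_P\setminus C_a$ produces subsets of $i$ U-chains of $\mathcal D_{P'}$, giving $c_{i+1,U}(\mathcal D_P)\le |C_a| + c_{i,U}(\mathcal D_{P'})$. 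The real content there is the swap lemma itself --- that some optimal family of $i+1$ U-chains contains a longest $C_a$ --- and your exchange argument is only an outline: Greene--Kleitman-style exchanges are delicate for U-chains because they are not arbitrary chains but must pass through \emph{all} vertices of two consecutive row lengths with prescribed tails, so a naive portion-swap can produce a set that is no longer a union of U-chains. Both pieces need careful arguments of the kind carried out in \cite[Section 2]{Kh1}.
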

 \begin{conjecture}[Oblak Recursive Conjecture]\label{Oblakrecursconj}
The map $P\to \mathfrak Q(P)$ satisfies $\mathfrak Q(P)=Ob(P)$.
 \end{conjecture}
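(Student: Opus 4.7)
My plan is to attack Conjecture \ref{Oblakrecursconj} by strong induction on $n = |P|$. The base cases ($P$ almost rectangular) are trivial: $\mathfrak Q(P) = (n) = Ob(P)$ by the Corollary following Lemma \ref{PBklem}, and by Definition \ref{Oblakrecursivedef}. For the inductive step, fix $P$ of $n$ and choose $a \in S_P$ maximizing $|C_a|$. Theorem \ref{indexthm} already delivers the largest part of $\mathfrak Q(P)$, namely $|C_a|$. Setting $P' = P'(P,a)$, which is a partition of $n - |C_a|$, the inductive hypothesis yields $\mathfrak Q(P') = Ob(P')$. It therefore suffices to prove the \emph{reduction statement}: the partition obtained from $\mathfrak Q(P)$ by deleting its largest part equals $\mathfrak Q(P')$. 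Combined with Theorem \ref{indthm} (well-definedness of $Ob$), this would close the induction.

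The first step I would take is to build a concrete witness. Using the elementary maps in Definition \ref{DPdef}, I would construct a distinguished $A_0 \in \mathcal U_B$ whose action along the U-chain $C_a$ produces a Jordan block of length exactly $|C_a|$ (the elementary maps whose composition realizes $C_a$ in Definition \ref{Udef} provide the blueprint). I would then perturb $A_0$ by a generic element $A_1$ of the \emph{complementary} part of $\mathcal U_B$, defined via the $sl_2$-grading of Section \ref{gradingUBsec}: concretely, remove the coordinates whose elementary maps land inside the vertices of $C_a$. The goal is to realize $A = A_0 + \epsilon A_1$ with Jordan type $(|C_a|, \mathfrak Q(P'))$ for generic $\epsilon$, invoking the semicontinuity of Jordan type under specialization, together with Theorem \ref{stablethm} to force the parts of $\mathfrak Q(P)$ to differ pairwise by at least two.

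The main obstacle, flagged in the paragraph just before Definition \ref{Oblakrecursivedef}, is that $\mathcal D_{P'}$ is \emph{not} a subposet of $\mathcal D_P$, so one cannot naively embed $\mathcal U_{B'}$ (with $B' = J_{P'}$) inside $\mathcal U_B$ and read off the generic Jordan type of the complement. To bridge this gap I would: (i) show that the subalgebra of $\mathcal U_B$ "transverse to $C_a$" carries a U-chain combinatorics isomorphic to that of $\mathcal D_{P'}$ after a change of coordinates respecting the $sl_2$-grading; (ii) use the Gorenstein/CI framework invoked in the proof of Theorem \ref{stablethm} --- the Hilbert function of ${\sf k}[A,B]$ is $\mathfrak Q(P)^\vee$ --- to transfer the combinatorial identification into an identification of generic Jordan types. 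The Table Theorem \ref{tablethm} and its completeness (Theorem \ref{table2thm}), together with the case $r_P = 3$ in \cite{Kh2}, provide exhaustive verification in low-parts cases and suggest that this transversal structure behaves predictably.

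Finally, I would attempt to leverage the Table Loci Conjecture \ref{quadconj} structurally: the A-row / B-C hook dichotomy conjecturally reflects how the irreducible component of $\overline{\mathfrak Z(\mathfrak Q(P))}$ inside $\mathcal U_B$ cuts out a subvariety whose image under "deletion of the $C_a$ block" dominates the analogous generic locus for $B'$. If this dominance can be established (even just as a statement about generic fibers), semicontinuity of Jordan type and Theorem \ref{indexthm} applied to $P'$ will force the remaining blocks to agree with $\mathfrak Q(P')$ by a greedy matching argument, closing the induction. I expect Step (i) above --- matching the U-chain combinatorics across the non-embedding of $\mathcal D_{P'}$ into $\mathcal D_P$ --- to be the central difficulty.
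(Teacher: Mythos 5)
The statement you are trying to prove is labelled Conjecture~\ref{Oblakrecursconj} in the paper and is \emph{not} proved there; the authors explicitly record it as open, citing only partial results ($r_P\le 3$ via Theorem~\ref{Oblak3thm}, the one-sided bound $\mathfrak Q(P)\ge\lambda_U(P)$ from Theorem~\ref{halfOblakthm}, agreement of smallest parts from Theorem~\ref{minpartthm}), together with a preprint claim by Basili. So there is no proof in the paper for your sketch to be compared against, and you should not present this as if a complete argument exists.

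Beyond that, your sketch has concrete gaps that would each block the induction. First, the heart of your plan is the ``reduction statement'' --- that deleting the largest part $|C_a|$ of $\mathfrak Q(P)$ yields $\mathfrak Q(P')$ --- together with your step (i), an isomorphism of U-chain combinatorics between a ``transversal'' of $C_a$ inside $\mathcal U_B$ and $\mathcal D_{P'}$. But this is precisely the open content of the conjecture; the paragraph before Definition~\ref{Oblakrecursivedef} already warns that $\mathcal D_{P'}$ is not a subposet of $\mathcal D_P$, and you give no construction that circumvents this. Second, your perturbation $A=A_0+\epsilon A_1$ does not behave the way you need: the Jordan type of a sum of two commuting nilpotents is not in general the concatenation of their Jordan types, because the two blocks interact, and the direction of semicontinuity only tells you the generic type dominates the type at $\epsilon=0$ --- it does not pin down the generic type as $(|C_a|,\mathfrak Q(P'))$. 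Third, your final paragraph leans on Conjecture~\ref{quadconj}, which is itself a conjecture in this paper; a proof of Conjecture~\ref{Oblakrecursconj} that depends on it would not be a proof. Finally, Theorem~\ref{stablethm} gives the Rogers--Ramanujan gap condition on $\mathfrak Q(P)$, but this constrains the answer rather than determining it, so invoking it does not supply the missing matching of the remaining parts to $\mathfrak Q(P')$.
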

 It follows from Definition \ref{posetdef} that the poset $\mathcal D_P$ is independent of $\cha {\sf k}$. Since $Ob(P)$ is a combinatorial invariant of $\mathcal D_P$ the Recursive Conjecture implies that $\mathfrak Q(P)$ is independent of $\cha {\sf k}$. Also, by definition $\lambda (\mathcal D_P)\ge \lambda_U (P)$ in the Bruhat order.\footnote{The Bruhat order on partitions $P=(p_1,p_2,\ldots, p_t), P'=(p'_1,\ldots p'_{t'})$ of $n$ is  $
P\ge P' \text { if for all } i, \,\sum_{k=1}^i p_k\ge \sum_{k=1}^i p'_k$.}
A general result due to E.R. Gansner  and M. Saks (\cite{Gans,Sak}, see \cite[Theorem 6.1]{BrFo}) shows that $\lambda(\mathcal D_P)\ge \mathfrak Q(P)$.
 The first and second author showed:
 \begin{theorem}\cite[Theorem 3.9]{IK}\label{halfOblakthm} Let $\sf k$ be an infinite field.  Then $\mathfrak Q(P)\ge \lambda_U(P)$.
 \end{theorem}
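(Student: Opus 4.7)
\smallskip
\noindent\textbf{Plan.} My strategy reduces the Bruhat inequality $\mathfrak Q(P)\ge \lambda_U(P)$ to an existence statement for specific matrices. Since $\mathcal N_B$ is irreducible (Basili) and $\mathfrak Q(P)$ is by definition the Jordan type of a dense orbit in $\mathcal N_B$, the Gerstenhaber--Hesselink description of nilpotent orbit closures in $\Mat_n({\sf k})$ (see Remark~\ref{warnlocusclosure}) implies that $P_A\le \mathfrak Q(P)$ in the Bruhat order for every $A\in\mathcal N_B$. Hence it suffices to produce, for each $m\ge 1$, a specific matrix $A_m\in\mathcal N_B$ whose top $m$ Jordan block sizes sum to at least $c_{m,U}(\mathcal D_P)=\sum_{j=1}^m\lambda_{U,j}(P)$; then the chain of inequalities $\sum_{j=1}^m\mathfrak Q(P)_j\ge\sum_{j=1}^m P_{A_m,j}\ge c_{m,U}(\mathcal D_P)$ holds for every $m$, which is exactly the Bruhat inequality.

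\smallskip
\noindent\textbf{Construction of $A_m$.} Fix $m$, choose a family of U-chains $C_{a_1},\ldots,C_{a_m}$ in $\mathcal D_P$ whose union covers $c_{m,U}(\mathcal D_P)$ distinct vertices (this maximum is achieved by the definition of $c_{m,U}$), and let $W\subset V$ be the span of the covered basis vectors. Each $C_{a_j}$ is realized by a sequence of elementary maps from Definition~\ref{DPdef} (see Example~\ref{Uex}), so I will build $A_m$ as a generic ${\sf k}$-linear combination of the elementary maps appearing along these U-chains; the infinite-field hypothesis on $\sf k$ guarantees that a Zariski-open choice of coefficients is available. Automatically $A_m\in\mathcal U_B\subset\mathcal N_B$. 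The target is to show that at generic coefficients $W$ is $A_m$-invariant and $A_m|_W$ carries exactly $m$ Jordan blocks (morally one per chain) of total size $\dim W=c_{m,U}(\mathcal D_P)$, with $A_m$ acting on a chosen complement in a way producing only parts of size $1$; this would give $\sum_{j=1}^m P_{A_m,j}\ge c_{m,U}(\mathcal D_P)$, which is what we need.

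\smallskip
\noindent\textbf{The main obstacle.} The delicate step is establishing that $\dim\ker(A_m|_W)$ is exactly $m$ at generic coefficients, rather than larger. One has the bound $\dim\ker(A_m|_W)\ge m$ from the $m$ source vertices of the chosen U-chains, but because each elementary map ($\alpha_i$, $\beta_i$, $e_{i,k}$, $w_i$) acts on an entire row of $\mathcal D_P$ rather than on a single pair of vertices, extra kernel relations could in principle appear from the overlap pattern of the $C_{a_j}$'s. Ruling this out amounts to a finite-dimensional generic-rank computation on $W$, and Theorem~\ref{indexthm} provides the $m=1$ model: a single longest U-chain already yields a generic element of $\mathcal U_B$ realising a Jordan block of size $|C_a|$. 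Extending this to several U-chains simultaneously, so that the combined action on $W$ has minimal kernel, will require a careful choice of the optimal family—exploiting the tripartite structure of U-chains in Definition~\ref{Udef} (the middle chain through rows $a,a-1$ plus the source and sink connectors) so that the overlap pattern becomes rank-transparent and a direct specialisation argument can conclude the generic block structure on $W$.
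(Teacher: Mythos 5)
The paper does not prove this theorem; it is quoted from the reference \cite[Theorem~3.9]{IK}, so there is no in-paper argument to compare against.

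On its own terms, your reduction is correct: $\mathcal N_B$ is irreducible, $\mathfrak Q(P)$ is its generic Jordan type, and Gerstenhaber--Hesselink semicontinuity gives $P_A\le\mathfrak Q(P)$ for every $A\in\mathcal N_B$, so it would indeed suffice to produce, for each $m$, one $A_m\in\mathcal N_B$ with $\sum_{j\le m}P_{A_m,j}\ge c_{m,U}(\mathcal D_P)$. But you never produce such an $A_m$. Your construction rests on two unproved claims --- that at generic coefficients the span $W$ of the vertices covered by $m$ optimal U-chains is $A_m$-invariant, and that $A_m|_W$ has at most $m$ Jordan blocks --- and you explicitly flag the second as an unresolved ``main obstacle.'' The heuristic you offer in its support also misreads Definition~\ref{Udef}: every U-chain begins at the \emph{same} source $(1,p_1,1)$ and ends at the \emph{same} sink, so $m$ U-chains do not supply ``$m$ source vertices,'' and the lower bound $\dim\ker(A_m|_W)\ge m$ you invoke rests on a false premise (and in any case what the argument requires is the upper bound $\dim\ker(A_m|_W)\le m$). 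The invariance of $W$ is likewise not automatic, since each elementary map $\alpha_i,\beta_i,e_{i,k},w_i$ acts on an entire row of $\mathcal D_P$, so a map used along one chain will typically carry $W$-vertices belonging to a different chain outside $W$. Converting a maximal cover of $\mathcal D_P$ by $m$ U-chains into a verified rank estimate for a concrete $A\in\mathcal N_B$, simultaneously for all $m$, is precisely the content of \cite[Theorem~3.9]{IK}, and it is the step missing from your proposal.
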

 L. Khatami studied the smallest part of $\mathfrak Q(P)$ and defined a somewhat subtle combinatorial invariant  $\mu (P)$
 \cite[Definition 2.6]{Kh2}.  Using a study of the antichains of $\mathcal D_P$ she showed
 \begin{theorem}\cite[Theorem 4.1]{Kh2}\label{minpartthm} Let $P$ be a partition of $n$ and let $\sf k$ be an infinite field. The three partitions $\lambda (\mathcal D_P), \lambda_U(P)$ and $ \mathfrak Q(P)$ have the same smallest part, which is equal to $\mu (P)$. 
 \end{theorem}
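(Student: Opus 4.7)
The plan is to establish three separate equalities, each comparing a smallest-part statistic to the combinatorial invariant $\mu(P)$, and then to glue them together using Theorem~\ref{halfOblakthm}. I will denote by $\min(\lambda)$ the smallest part of a partition $\lambda$ and abbreviate $r=r_P$, so that by Theorem~\ref{rpthm} both $\mathfrak Q(P)$ and $\lambda_U(P)$ are partitions of $n$ with exactly $r$ parts.

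\textbf{Step 1 (chain side).} First I would show $\min\bigl(\lambda(\mathcal D_P)\bigr)=\mu(P)$. By the Greene--Kleitman theorem for finite posets, the conjugate partition $\lambda(\mathcal D_P)^\vee$ records the antichain statistics $a_j-a_{j-1}$, where $a_j$ is the maximum number of vertices covered by a union of $j$ antichains of $\mathcal D_P$. The smallest part of $\lambda(\mathcal D_P)$ is then the number of indices $i$ for which $\lambda(\mathcal D_P)^\vee_i$ equals the total number of parts of $\lambda(\mathcal D_P)$, i.e.\ the number of distinct antichains required to exhaust $\mathcal D_P$ in the Greene--Kleitman chain. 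I would then match this quantity to the definition of $\mu(P)$ in \cite[Def.\ 2.6]{Kh2} by a direct inspection of maximal antichains in $\mathcal D_P$, using the row/column structure inherited from the decomposition $V=\oplus V_{i,k}$.

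\textbf{Step 2 (U-chain side).} Next I would establish $\min\bigl(\lambda_U(P)\bigr)=\mu(P)$. The relevant combinatorial fact is that $\lambda_U(P)$ is built from U-chains $C_a$ associated to the almost rectangular subpartitions comprising $P$, with lengths given by \eqref{Caeq}. The recursive definition of $\lambda_U(P)$ together with Theorem~\ref{indthm} shows that after the Oblak peeling process terminates, the last part extracted is determined by what is left of $\mathcal D_P$ after the preceding maximal U-chains are removed; this residual quantity is precisely $\mu(P)$, since the antichain obstruction used to define $\mu(P)$ survives every peeling step.

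\textbf{Step 3 (upper bound on $\min \mathfrak Q(P)$).} Invoking Theorem~\ref{halfOblakthm} over an infinite field, we have $\mathfrak Q(P)\ge\lambda_U(P)$ in the Bruhat order \eqref{Bruhateq}. Both partitions share the total $n$ and, by Theorem~\ref{rpthm}, the same number of parts $r$. Consequently
\[
\min\mathfrak Q(P)=n-\sum_{i=1}^{r-1}\mathfrak Q(P)_i\le n-\sum_{i=1}^{r-1}\lambda_U(P)_i=\min\lambda_U(P)=\mu(P),
\]
where the last equality comes from Step 2.

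\textbf{Step 4 (matching lower bound --- the main obstacle).} The hard part is to prove $\min\mathfrak Q(P)\ge\mu(P)$. The approach I would pursue is to translate the antichain realizing $\mu(P)$ into an algebraic constraint on any $A\in\mathcal N_B$. Concretely, an antichain $\mathcal A\subset\mathcal D_P$ corresponds, via the description of $\mathcal U_B$ in Section~\ref{posetmapsec}, to a subspace $W_{\mathcal A}\subset V$ such that every $A\in\mathcal N_B$ sends $W_{\mathcal A}$ into a controlled complement, yielding inequalities on the ranks of $A^i$ and hence on the multiplicities of small parts of $P_A$ via Lemma~\ref{partsilem}. The technical heart will be to choose the antichain witnessing $\mu(P)$ carefully enough that the resulting rank inequalities force $P_A$ to have no part smaller than $\mu(P)$ on a dense open subset of $\mathcal N_B$. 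Once this is in place, combining with Step 3 gives $\min\mathfrak Q(P)=\mu(P)$, and together with Steps 1--2 the theorem follows.

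The chain-side combinatorics of Step 1 and the Bruhat argument of Step 3 are essentially bookkeeping; the genuine obstacle is Step 4, where one must convert a purely order-theoretic obstruction in $\mathcal D_P$ into a genericity statement about Jordan types of commuting nilpotent matrices. I expect the argument there to parallel the antichain/rank techniques used in \cite{BIK,IK} but tuned to capture the smallest rather than the largest part.
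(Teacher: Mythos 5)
This theorem is cited in the paper from the second author's work \cite[Theorem 4.1]{Kh2} and is not proven here, so there is no internal proof to compare against; I will therefore evaluate your proposal on its own merits.

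Your Step 3 is correct and cleanly argued \emph{given} what it assumes: if $\mathfrak Q(P)$ and $\lambda_U(P)$ are both partitions of $n$ with $r_P$ parts and $\mathfrak Q(P)\ge\lambda_U(P)$ in the Bruhat order, then indeed $\min\mathfrak Q(P)\le\min\lambda_U(P)$, by truncating \eqref{Bruhateq} at $m=r_P-1$ and taking complements. However, the claim that $\lambda_U(P)$ has exactly $r_P$ parts is asserted without argument; Theorem~\ref{rpthm} only gives this for $\mathfrak Q(P)$, and the Bruhat inequality by itself only bounds the number of parts of $\mathfrak Q(P)$ from above by that of $\lambda_U(P)$. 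That $\lambda_U(P)$ also has exactly $r_P$ parts is a genuine fact one must extract from the U-chain recursion (Definition~\ref{Oblakrecursivedef}), and your proof should say how.

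The more serious issue is that Steps 1, 2 and 4 are not proofs but announcements of intent. Step 1 reduces (correctly) to matching a Greene--Kleitman antichain statistic to $\mu(P)$ ``by direct inspection,'' Step 2 asserts that ``the antichain obstruction ... survives every peeling step,'' and Step 4 concedes that the ``technical heart'' is converting the antichain witnessing $\mu(P)$ into rank inequalities forcing the generic $A$ to have no small Jordan block. None of these three steps contains an argument; in particular you do not engage with the actual definition of $\mu(P)$ from \cite[Def.~2.6]{Kh2}, which the paper itself flags as ``somewhat subtle.'' Step 4 is precisely where the content of the theorem lives: the upper bound of Step 3 is essentially formal once Theorem~\ref{halfOblakthm} is granted, but the lower bound $\min\mathfrak Q(P)\ge\mu(P)$ requires producing, for a dense open set of $A\in\mathcal N_B$, explicit control of $\operatorname{rank}A^i-\operatorname{rank}A^{i+1}$ for small $i$ via Lemma~\ref{partsilem}, and you do not indicate how the antichain data of $\mathcal D_P$ yields such control. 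As written, the proposal identifies a plausible architecture for the proof and correctly locates the difficulty, but does not fill in any of the non-formal steps, so it does not constitute a proof of the theorem.
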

 Together with P. Oblak's Index Theorem \ref{indexthm}, this implies the following fact we will use in Section~\ref{boxspecialsec}.		
 \begin{theorem}[Oblak conjecture for $r_P\le 3$]\label{Oblak3thm}
 The Oblak Recursive Conjecture \ref{Oblakrecursconj} is true over any infinite field $\sf k$ when $r_P\le 3$.
 \end{theorem}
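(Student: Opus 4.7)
The plan is to reduce the identity $\mathfrak{Q}(P)=Ob(P)$ to a comparison of three scalar invariants of these two partitions: their largest part, their smallest part, and their number of parts. The case $r_P=1$ is immediate: then $P$ is almost rectangular, and both $\mathfrak{Q}(P)$ and $Ob(P)$ equal $(|P|)$ by Theorem~\ref{indexthm} and Definition~\ref{Oblakrecursivedef}.

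For $r_P\in\{2,3\}$, I would assemble four facts from the excerpt. First, by Theorem~\ref{rpthm}, $\mathfrak{Q}(P)$ has exactly $r_P$ parts. Second, by Theorem~\ref{indexthm} together with the identity $\lambda_{1,U}(P)=c_{1,U}(\mathcal{D}_P)=\max_a|C_a|$, both $\mathfrak{Q}(P)$ and $\lambda_U(P)=Ob(P)$ have largest part $M:=\max_{a\in S_P}|C_a|$. Third, by Theorem~\ref{minpartthm}, both have smallest part $m:=\mu(P)$. Fourth, both sum to $n=|P|$. The remaining ingredient is an upper bound on the number of parts of $\lambda_U(P)$, which I would obtain by producing an explicit covering of $\mathcal{D}_P$ by $r_P$ U-chains: if $P=P_{(1)}\cup\cdots\cup P_{(r_P)}$ is a minimal almost rectangular decomposition with $P_{(j)}=(a_j^{m_j},(a_j-1)^{m'_j})$, then every row of $\mathcal{D}_P$ has length in $\{a_j,a_j-1\}$ for some $j$ (because every part of $P$ appears in some $P_{(j)}$) and is therefore covered in its entirety by $C_{a_j}$ via Definition~\ref{Udef}; consequently $c_{r_P,U}(\mathcal{D}_P)=n$ and $\lambda_U(P)$ has at most $r_P$ nonzero parts.

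The conclusion then follows by combining these constraints. For $r_P=2$: $\mathfrak{Q}(P)=(M,m)$ with $M+m=n$, and the only partition of $n$ with at most two parts, largest part $M$, and smallest part $m$ is $(M,m)$, so $\lambda_U(P)=\mathfrak{Q}(P)$. For $r_P=3$: $\mathfrak{Q}(P)=(M,\,n-M-m,\,m)$. One rules out the possibility that $\lambda_U(P)$ has the single part $(n)$ (which would force $M=n$, contradicting that $\mathfrak{Q}(P)$ has three positive parts summing to $n$) and that it has two parts $(M,n-M)$ (which would force $n-M=m$, and hence the middle part of $\mathfrak{Q}(P)$ would be zero), so $\lambda_U(P)$ has exactly three parts $(M,\ast,m)$ summing to $n$, forcing $\lambda_U(P)=(M,\,n-M-m,\,m)=\mathfrak{Q}(P)$.

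The only step requiring genuine combinatorial effort is the U-chain covering claim in the second paragraph; this is where I expect the main obstacle. Once Definition~\ref{Udef} is unwound---so that $C_{a_j}$ is known to pass through every vertex of every row of length $a_j$ or $a_j-1$---and the minimality of the decomposition is used to ensure that every part of $P$ equals $a_j$ or $a_j-1$ for some $j$, the covering claim is essentially immediate. Notably, this covering argument does not itself use the hypothesis $r_P\le 3$; the restriction to small $r_P$ is what makes the counting of parts suffice to force $\lambda_U(P)=\mathfrak{Q}(P)$, since for $r_P\le 3$ the endpoint data $(M,m)$ together with the sum $n$ determines the whole partition.
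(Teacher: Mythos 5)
Your proof is correct and, at bottom, takes the same route the paper intends: pin down the largest part of $\lambda_U(P)=Ob(P)$ and $\mathfrak{Q}(P)$ via Theorem~\ref{indexthm} and the definition $\lambda_{1,U}=c_{1,U}=\max_a|C_a|$, pin down the common smallest part $\mu(P)$ via Theorem~\ref{minpartthm}, and then use the constraint on the number of parts together with the total $n$ to force equality when $r_P\le 3$. The paper asserts this as an immediate consequence of Theorems~\ref{indexthm} and~\ref{minpartthm}, leaving implicit the one nontrivial counting step you supply explicitly: the bound on the number of parts of $\lambda_U(P)$.

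Where you genuinely differ is in how that bound is obtained. You produce the \emph{upper} bound $t(\lambda_U(P))\le r_P$ directly, by exhibiting $r_P$ U-chains $C_{a_1},\dots,C_{a_{r_P}}$ (one per block of a minimal almost rectangular decomposition of $P$) covering every row of $\mathcal{D}_P$, hence $c_{r_P,U}(\mathcal D_P)=n$. This is elementary and also gives the \emph{lower} bound once combined with the largest/smallest-part data and the sum; in particular you never invoke Theorem~\ref{halfOblakthm} ($\mathfrak{Q}(P)\ge\lambda_U(P)$), which is the deeper result the paper leans on for the lower bound. One small imprecision: if in your decomposition some $P_{(j)}$ has no part of size $a_j$ (i.e.\ $m_j=0$), you should take the U-chain indexed by the largest part actually occurring in $P_{(j)}$, so that $C_{a_j}$ is well-defined; this does not affect the covering. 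With that fix, the argument is complete, and arguably a bit more self-contained than the paper's citation-only justification, since the covering claim is exactly the detail the paper leaves to the reader.
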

 \begin{remark}[Summary of results on the Oblak Recursive Conjecture]\label{historyrem}
 Thus, the cases $r_P=2$ \cite{BIK,KO,Obl1,Z} and $r_P=3$ \cite{Kh2} of the Conjecture have been known since 2008 and 2012, respectively. Theorem~\ref{halfOblakthm} of the first and second authors then showed ``half'' the Conjecture in all characteristics.  Since $\lambda (D_P)\ge \mathfrak Q(P)\ge \lambda_U(P)$ a proof of the purely combinatorial statement $\lambda(\mathcal D_P)=\lambda_U(\mathcal D_P)$ would show the Oblak conjecture for $P$ in all characteristics. In contrast, which pairs of Jordan types occur for $A,B$ with $[A,B]=0$ depends on $\cha {\sf k}$: see \cite[Example 2.18]{BIK}, \cite{BrWi}, and \cite[Example 22]{McN}.
\end{remark}
\begin{lemma}\label{lambdaUstable} \cite[Proposition 2.7]{Kh1} Let $P$ be a partition of $n$. Then $\lambda_U(P)$ has parts that differ pairwise by at least two. 
\end{lemma}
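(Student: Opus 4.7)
My plan is to invoke Theorem~\ref{indthm} to replace $\lambda_U(P)$ by the recursively constructed partition $Ob(P)$, and then prove by strong induction on $n=|P|$ that $Ob(P)$ has parts differing pairwise by at least two. The base case is $P$ almost rectangular, where $Ob(P)=(n)$ has a single part and the claim is vacuous.

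For the inductive step I would pick $a\in S_P$ with $|C_a|$ maximal among the U-chain lengths of $\mathcal{D}_P$, set $P'=P'(P,a)$, and use Definition~\ref{Oblakrecursivedef} to write $Ob(P)=(|C_a|,Ob(P'))$. By the inductive hypothesis the parts of $Ob(P')$ differ pairwise by at least two, so it would remain to show that the largest part of $Ob(P')$, which equals $|C'_b|$ for a maximum U-chain $C'_b$ in $\mathcal{D}_{P'}$, satisfies $|C'_b|\leq|C_a|-2$ for every $b\in S_{P'}$.

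To obtain this bound, I would record the multiplicities of $P'$, namely $n'_j=n_j$ for $j\leq a-2$, $n'_{a-1}=n_{a+1}$, and $n'_j=n_{j+2}$ for $j\geq a$, and combine them with the length formula \eqref{Caeq}. A direct computation then produces $|C'_b|=|C_b|-2(n_a+n_{a-1})$ when $b\leq a-2$, and $|C'_b|=|C_{b+2}|-2(n_{b+2}+n_{b+1})$ when $b\geq a$. The first identity yields the bound via $n_a\geq 1$ together with $|C_b|\leq|C_a|$; the second via $n_{b+2}=n'_b\geq 1$ together with $|C_{b+2}|\leq|C_a|$.

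The delicate case, and the main obstacle, is $b=a-1$, which forces $n_{a+1}=n'_{a-1}\geq 1$. The analogous identity reads
\begin{equation*}
|C_a|-|C'_{a-1}|=a\,n_a+(a-1)n_{a-1}-(a-3)n_{a+1}-(a-2)n_{a-2},
\end{equation*}
and the negative term $-(a-2)n_{a-2}$ can overwhelm a naive lower bound using only $n_a\geq 1$. To overcome this, I would exploit a cascade of maximality inequalities triggered by $n_{a+1}\geq 1$: first $|C_a|\geq|C_{a+1}|$ becomes available and reduces to $n_{a-1}\geq n_{a+1}\geq 1$; this then makes $|C_a|\geq|C_{a-1}|$ available, which reduces to $n_a\geq n_{a-2}$. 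Substituting both inequalities into the displayed identity bounds $|C_a|-|C'_{a-1}|$ below by $2n_a+2n_{a+1}\geq 4$, completing the case analysis and the induction.
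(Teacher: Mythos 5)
Your proof is correct. A point of reference first: the paper does not supply its own proof of this lemma, it simply cites \cite[Proposition 2.7]{Kh1}, so there is no in-text argument to compare against. What you have given is a self-contained inductive proof that uses only tools the paper actually develops: the length formula \eqref{Caeq}, the definition of $P'(P,a)$, and Theorem~\ref{indthm}. I verified each of the three chain-length identities your argument relies on, namely $|C'_b|=|C_b|-2(n_a+n_{a-1})$ for $b\le a-2$; $|C'_b|=|C_{b+2}|-2(n_{b+2}+n_{b+1})$ for $b\ge a$; and, for $b=a-1$, the displayed formula $|C_a|-|C'_{a-1}|=an_a+(a-1)n_{a-1}-(a-3)n_{a+1}-(a-2)n_{a-2}$. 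Your cascade in that last case is the crux and it works: since $n'_{a-1}=n_{a+1}\ge1$ the chain $C_{a+1}$ exists, the identity $|C_a|-|C_{a+1}|=(a-1)(n_{a-1}-n_{a+1})$ together with maximality gives $n_{a-1}\ge n_{a+1}\ge1$, which makes $C_{a-1}$ exist, and then $|C_a|-|C_{a-1}|=(a-2)(n_a-n_{a-2})$ gives $n_a\ge n_{a-2}$; substituting both yields $|C_a|-|C'_{a-1}|\ge 2n_a+2n_{a+1}\ge 4$. You also need, and implicitly use, that $P'\ne\emptyset$ whenever $P$ is not almost rectangular, which is easy to check. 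One small remark: the multiplicity relations you used ($n'_j=n_j$ for $j\le a-2$ and $n'_j=n_{j+2}$ for $j\ge a-1$) silently correct a typo in the paper preceding Definition~\ref{Oblakrecursivedef}, which reads $n'_i=n_{i+2}-2$; the worked example $P'((7,5,4,3,3,2,1),3)=(5,3,2,1)$ given there confirms your version is the right one.
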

Recall that $\mathfrak Q(P)$ has parts that differ pairwise by at least two when $\cha k=0$ or $\cha k=p>n$ and $\sf k$ is infinite by Theorem \ref{stablethm}.
It follows from Theorems \ref{indexthm} and \ref{minpartthm}, and Lemma~\ref{lambdaUstable} that $\mathfrak Q(P)$ has parts that differ pairwise by at least two over \emph{any} infinite field $\sf k$ when $r_P\le 3$ (since then  $\lambda(\mathcal D_P)=\mathfrak Q(P)=\lambda_U (\mathcal D_P)$).

\subsection{Key of a stable partition $Q$ and the Box Conjecture.}\label{boxconjsubsec}
We first define the \emph{key} of $Q$, which determines the shape of the box $\mathcal B(Q)$ of partitions which conjecturally make up $\mathfrak Q^{-1}(Q)$.
\begin{definition}[Key of $Q$]\label{keydef}  Let $Q=(q_1,q_2,\ldots ,q_k)$ with $ q_1\ge q_2\ge \cdots\ge q_k>0$ be a partition of $n=\sum_{i=1}^k q_i$ such that 
$q_i-q_{i+1}\ge 2$ for $ i\in {1,\ldots, k-1}$. Put
\begin{equation}\label{key1eq}
s_i=\begin{cases}\, q_i-q_{i+1}-1& \text { for } 1\le i\le k-1\\
\, q_k &\text { for } i=k.
\end{cases}
\end{equation}
We call the sequence 
\begin{equation}\label{key2eq}
{\sf S}_Q=(s_1,s_2,\ldots ,s_k)
\end{equation}
the \emph{key} of the stable partition $Q$.
\end{definition}
\begin{example} The key of $Q=(u,u-r)$ is ${\sf S}_Q=(r-1,u-r)$. The key of $Q=(11,6,2)$ is ${\sf S}_Q=(4,3,2)$.
\end{example}
Evidently, the assignment $Q\rightarrow {\sf S}(Q)$ is a bijection between the set of partitions with $k$ parts that differ pairwise by at least two and $\mathbb Z_{>0}^k$. It is easy to see that the inverse to \eqref{key1eq} is
\begin{align}
q_i&=\left(\sum_{j\ge i}s_j\right)+k-i,\notag \\
n&=\left(\sum i\cdot s_i\right)+\frac{k(k-1)}{2}.\label{nSeq}
\end{align}
We now state a conjecture generalizing the Table Theorem (i.e. Theorems \ref{tablethm} and \ref{table2thm}). 
\begin{conj}[Box Conjecture]\label{Zgenconj} Let $Q$ be a partition having $k$ parts that differ pairwise by at least two, and assume that the key of $Q$ is ${\sf S}_Q = (s_1,s_2,\ldots,s_k)$.
\begin{enumerate}[(a)]
\item There is an  $s_1\times s_2\cdots \times s_k$ array (``box'') $\mathcal B(Q)$ of distinct partitions 
\begin{equation}
\mathcal B(Q)=\{P_{i_1,i_2,\ldots, i_k}\mid 1\le i_u\le s_u\},
\end{equation}
 such that $P_{i_1,i_2,\ldots, i_k}$ has $\sum_{1\le u\le k} i_u$ parts and satisfies $\mathfrak Q(P_{i_1,i_2,\ldots , i_k})=Q$.
\item  The cardinality $|\mathfrak Q^{-1}(Q)|=\prod_{1\le i\le k} s_i.$ Equivalently, given (a), the set of partitions in $\mathcal B(Q)$ is the complete set $\mathfrak Q^{-1}(Q)$.
\end{enumerate}
\end{conj}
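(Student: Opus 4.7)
The plan is to proceed by induction on the number of parts $k$ of $Q$. The base cases $k=1$ (almost rectangular partitions, giving $|\mathfrak Q^{-1}([n])|=n=s_1$) and $k=2$ (Table Theorem \ref{Zthm}) are already established, and the three-part analyses carried out in Section~\ref{boxspecialsec} will supply evidence and combinatorial templates for the inductive step. For the inductive step, write $Q=(q_1,Q')$ with $Q'=(q_2,\ldots,q_k)$, and observe that the key satisfies ${\sf S}_{Q'}=(s_2,\ldots,s_k)$, so that the conjectured cardinality factors as $|\mathcal B(Q)|=s_1\cdot|\mathcal B(Q')|$. The natural combinatorial device is Oblak's removal $P\mapsto P'(P,a)$ from Definition~\ref{Oblakrecursivedef} applied to a part $a$ of $P$ for which $|C_a|=q_1$ (i.e.\ realizes the largest part of $Q$ by Theorem~\ref{indexthm}). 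I would define a ``top-removal'' map $\rho\colon\mathfrak Q^{-1}(Q)\to\mathfrak Q^{-1}(Q')$ and try to show that $\rho$ is surjective with every fiber of size exactly $s_1=q_1-q_2-1$; combining with the inductive hypothesis would then give both part~(a) (the box structure, with the first coordinate $i_1$ enumerating the fiber of $\rho$) and part~(b) (the cardinality).

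Concretely, to construct $P_{i_1,\ldots,i_k}$ one would first apply the inductive hypothesis to produce $P'_{i_2,\ldots,i_k}\in\mathfrak Q^{-1}(Q')$, and then prepend an almost rectangular ``top slab'' parametrized by $i_1\in\{1,\ldots,s_1\}$, generalizing the A/B/C typology of Definition~\ref{casesdef}. Each case would be verified by computing the lengths of the relevant U-chains in $\mathcal D_{P_{i_1,\ldots,i_k}}$ via the general formula \eqref{Caeq}, checking that the longest U-chain has length $q_1$ and that the remaining U-chains, after removal of a longest one, realize $Q'$. A precise formulation of the fiber of $\rho$ would require a multi-index analogue of the invariants $k_t,c_t,d_t,q_t$ of Definition~\ref{tablenotationdef}; I expect the correct normalization to emerge from pushing the $k=2$ analysis through one more level and comparing with the three-part cases of Section~\ref{boxspecialsec}. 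Part~(c) would be approached by extending the equation proposal of Section~\ref{proposalsec}: one conjectures a mix of linear forms in the coordinates of $\mathcal U_B$ (Definition~\ref{coordUBdef} suitably generalized) together with polarized $j\times j$ determinantal expressions for $2\le j\le k$, arising from the Ko\v sir--\v Sivic jet-space framework \cite{KS,KS2}. The $sl_2$-homogeneity of Proposition~\ref{homogprop} would organize the dimension count, and the complete-intersection statement would be reduced to exhibiting these equations as a regular sequence of length $(\sum i_u)-k$ inside the irreducible affine space $\mathcal U_B$.

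The main obstacle will be the combinatorial step: describing the fibers of $\rho$ exactly. Already for $k=2$ the appearance of the type C partitions in Definition~\ref{casesdef} shows that top removal is not compatible with the naive almost rectangular decomposition, and I expect a richer stratification for $k\ge 3$, involving interactions between the top U-chain and the first part of the inductively produced $P'$. An attractive way to isolate this difficulty from the (still open) Oblak Recursive Conjecture~\ref{Oblakrecursconj} is to first prove the Box Conjecture for the purely combinatorial invariant $\lambda_U$: show that $|\lambda_U^{-1}(Q)|=\prod s_i$ with the predicted box structure using Theorem~\ref{indthm}, which is an entirely poset-theoretic assertion about $\mathcal D_P$, and then transfer the result to $\mathfrak Q^{-1}(Q)$ via Theorem~\ref{halfOblakthm} together with any cases of the Recursive Conjecture available (e.g.\ Theorem~\ref{Oblak3thm} for $r_P\le 3$). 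This route separates the hard combinatorics from both the geometry of part~(c) and the Recursive Conjecture, and it also explains why the Box Conjecture should be expected to hold in any characteristic for which the Recursive Conjecture holds.
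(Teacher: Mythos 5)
The statement you were asked to prove is stated in the paper as a \emph{conjecture} (Conjecture~\ref{Zgenconj}), not a theorem: the paper offers no proof of it, only supporting evidence --- the $k=1$ and $k=2$ cases (the regular case and the Table Theorem~\ref{Zthm}), the diagonal-hook-length count of Lemma~\ref{DHLlem} which verifies that the conjectured cardinalities sum to $p(n)$, and the three-part families of Theorem~\ref{fillthm}. There is therefore nothing to ``compare'' your argument against, and your submission is, correctly, a research plan rather than a completed proof.

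That said, your plan is well aligned with the strategy the authors actually pursue in the cases they do settle. The ``prepend a top slab to a member of $\mathfrak Q^{-1}(Q')$'' mechanism you describe is exactly what Lemmas~\ref{s=2lem}, \ref{s=3lem} and Proposition~\ref{boxspecialprop} do, and your observation that ${\sf S}_{Q'}=(s_2,\ldots,s_k)$ so that $|\mathcal B(Q)|=s_1\cdot|\mathcal B(Q')|$ is the correct bookkeeping. Your proposal to first establish a purely combinatorial ``Box Conjecture for $\lambda_U$'' and then transfer to $\mathfrak Q$ is a genuine refinement not spelled out in the paper, though it is anticipated by the authors' remark immediately following the conjecture that ``in principle'' the Recursive Conjecture reduces parts (a)--(b) to combinatorics. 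Two cautions: first, Theorem~\ref{halfOblakthm} only gives the inequality $\mathfrak Q(P)\ge\lambda_U(P)$, so the transfer you sketch is not automatic and really does require the Recursive Conjecture (or at least the cases where $r_P$ is small, via Theorem~\ref{Oblak3thm}) --- you flag this, which is right. Second, the ``top-removal map $\rho$'' you posit will not literally be Oblak's $P\mapsto P'(P,a)$, since that operation shifts lower rows and $P'(P,a)$ is generally not in $\mathfrak Q^{-1}(Q')$; one rather removes the top part of $P$ itself, as in the three-part cases, and this works only under a ``largest part of the remainder is at most $q_2$'' hypothesis that is verified case by case (cf.\ Lemma~\ref{bigpartBruhatlem}). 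Your identification of the type~C stratum as the main combinatorial obstruction to a uniform fiber description is exactly the difficulty the authors hit in the $s=4$ case, so this is a correct diagnosis of where the hard work lies.
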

\begin{remark}
In principle Oblak's Recursive Conjecture \ref{Oblakrecursconj}  for the map $P\to \mathfrak Q(P)$ should allow us to decide the Box Conjecture.  We follow this strategy in Section \ref{boxspecialsec} to prove part (a) of the Box Conjecture for certain $Q$. Of course, a deeper understanding of $\mathcal B(Q)$ and Conjecture \ref{Zgenconj} could very well give a new approach to showing Oblak's Recursive Conjecture. 
In \cite[Section 5.3]{IKVZ} we show that the count of partitions in the box $\mathcal B(Q)$ is the same as that for partitions of diagonal hook lengths given by $Q$: this shows that the Box Conjecture is consistent with a count of the number of partitions of $n$. It would be of interest to show that there is a bijection -- preferably explicit -- between the set of partitions in $\mathfrak Q^{-1}(Q)$ and the set of partitions having diagonal hook lengths $Q$.
\end{remark}
\begin{remark}
Let $Q$ be a partition as in Conjecture \ref{Zgenconj} and $B=J_Q$, the Jordan matrix of partition $Q$ (notation before Lemma \ref{partsilem}). Recall from equation \eqref{locidefeqn} that by the \emph{locus} $\mathfrak Z_P$ of a Jordan type $P\in \mathcal B(Q)$ we mean the Zariski closure of the set $\{A\in \mathcal N_B\mid P_A=P\}$ of matrices commuting with $B=J_Q$ and having Jordan type $P$. Recall that here $\mathcal N_B=\mathcal U_B$ is an affine space (Proposition \ref{ubnbprop} (b)). Together with M. Boij we conjecture that the codimension of the locus $\mathfrak Z_P, P=(P_{i_1,i_2,\ldots ,i_k})$ in $\mathcal N_B$ is $(\sum_{u=1}^k i_u)-k$, and that the locus is a complete intersection defined by explicit irreducible equations of degree at most $k$ in the coordinates of $\mathcal N_B$.
 In a sequel in progress joint with M.~Boij, we plan to show this conjecture for $k=2$. For some more information about these loci, see \cite[\S 4]{IKVZ}.
\end{remark}
Note that when $s_i=1$ there is no contribution of this part of the key to the conjectured cardinality $|\mathfrak Q^{-1}(Q)|$, a fact that was known at least in the case when ${\sf S}_Q=( 1,1,\ldots ,1 ,s_k)$ (see \cite[Theorem 4.1]{Obl2}).  Since the cases where $k=3$ and $s_1,s_2$, or $s_3$ is equal to $ 1$ are relatively easy, we give below several examples of stable partitions $Q$ with three parts for which we have verified\footnote{We used the Oblak-Khatami Theorem \ref{Oblak3thm} and straightforward case-by-case considerations to verify the claims made in Examples \ref{852ex} and \ref{963ex}.} Conjecture \ref{Zgenconj} and where no $s_i$ is equal to $1$.
\begin{example}\label{852ex} Let $Q=(8,5,2)\vdash 15$ so $ {\sf S}_Q=(2,2,2)$. Then  $|\mathfrak Q^{-1}(Q)|=8$. The two floors of $\mathcal B(Q)$ are
\begin{equation}\label{852eq}
\left(\begin{array}{cc}
(8,5,2)& (8,5,1^2)\\
(8,4,[3]^2)&(8,4,[3]^3)
\end{array}\right),
\qquad
\left(\begin{array}{cc}
{(7,4,[4]^2)}& (7,[6]^2,[2]^2)\\
(7,4,[4]^3)&(7,4,[4]^4)
\end{array}\right).
\end{equation}
The floor at left are the partitions obtained from $\mathfrak Q^{-1}((5,2))$ by adjoining the part $8$. The partitions in the second floor at right are obtained by adjoining $7$ to those partitions $P'$ in $\mathfrak Q^{-1}((6,2))$ having no part 6.
 
\end{example}
\begin{example}\label{963ex}
Let $Q=(9,6,3)\vdash 18,$ so $ {\sf S}_Q=(2,2,3)$. Then  $|\mathfrak Q^{-1}(Q)|=12$. The two floors of $\mathcal B(Q)$ are
\begin{equation}\label{963beq}
\left(\begin{array}{ccc}
(9,6,3)& (9,6,[3]^2)&(9,6,[3]^3)\\
(9,5,[4]^2)&(9,5,[4]^3)&(9,5,[4]^4)
\end{array}\right),
\quad
\left(\begin{array}{ccc}
{(8,5,[5]^2)}& (8,[7]^2,[3]^2)&(8,[7]^2,[3]^3)\\
(8,5,[5]^3)&(8,5,[5]^4)&(8,5,[5]^5)
\end{array}\right).
\end{equation}
The two other partitions whose keys are permutations of ${\sf S}_Q$  are $(9,5,2)\vdash 16$ corresponding to key $(3,2,2)$ and $(9,6,2)\vdash 17$ corresponding to key $(2,3,2)$.
For $Q=(9,6,2)\vdash 17$ the array $\mathcal B(Q)$ has the following two floors:
\begin{equation}\label{963ceq}
\left(\begin{array}{cc}
(9,6,2)& (9,6,1,1)\\
(9,4,2^2)&(9,3^2,1^2)\\
(9,4,2,1^2)&(9,4,1^4)
\end{array}\right),
\qquad
\left(\begin{array}{cc}
{ (8,4,3,2)}& (8,4,3,1^2)\\
(8,4,2^2,1)&(8,3^2,1^3)\\
(8,4,2,1^3) &(8,4,1^5)
\end{array}\right).
\end{equation}
For $Q=(9,5,2)\vdash 16$ the array $\mathcal B(Q)$ has these floors:
\begin{equation}\label{952eq}
\left(\begin{array}{cc}
(9,5,2)& (9,5,1,1)\\
(9,4,2,1)&(9,4,1^3)
\end{array}\right),
\quad
\left(\begin{array}{cc}
{(7,4,3,2)}& (7,4,3,1^2)\\
(6,[7]^2,[3]^2)&(6,[7]^2,[3]^3)
\end{array}\right),\quad
\left(\begin{array}{cc}
(7,4,2^2,1)&(7,3^2,1^3)
\\
(7,4,2,1^3)&(7,4,1^5)
\end{array}\right).
\end{equation}
\end{example}
We now give the box $\mathcal B(Q)$ for the simplest example with no $s_i=1$ such that $Q$ has four parts. We have not shown completeness of the box in this case.
\begin{example}\label{11,8,5,2ex}
Let $Q=(11,8,5,2)\vdash 26$, so $ {\sf S}_Q=(2,2,2,2)$. Then  $|\mathfrak Q^{-1}(Q)|\ge 16$. To initially write down the 16-element box $\mathcal B(Q)$,  conveniently viewed with the $4$-D glasses supplied to the reader, we assumed the P. Oblak Conjecture \ref{Oblakrecursconj}, which is open for $r_P=4$. However, we were able to verify that $\mathfrak Q(P)=Q$ for each $P\in \mathcal B(Q)$, using a calculation of the antichains of $\mathcal D_P$ to prove $\lambda(\mathcal D_P)=\lambda_U(\mathcal D_P)$.
 We view $\mathcal B(Q)$ as having two $2\times 2\times 2$ floors. The first floor is obtained by adjoining the part $11$ to each element of $\mathcal B((8,5,2))$ in display \eqref{852eq}. The second floor is 
\begin{equation}\label{11852eq}
\left(\begin{array}{cc}
{ (10,7,4,3,2)}&(10,7,4,2^2,1) \\
(10,7,4,3,1^2)&(10,7,4,2,1^3)
\end{array}\right),
\qquad
\left(\begin{array}{cc}
{(10,6,4,3,2,1)}& (10,7,3^2,1^3)\\
(10,6,4,3,1^3)&(10,7,4,1^5)
\end{array}\right).
\end{equation}
\noindent\end{example}

\subsection{Box Conjecture for certain partitions $Q$ with three parts.}\label{boxspecialsec}
In this section we use a method similar to certain steps in the proof of our main Theorem~\ref{tablethm} to show part (a) of the Box Conjecture-- ``filling the box'' -- for a few (infinite) families of partitions $Q$ with three parts.
Recall from Theorem \ref{Oblak3thm} and Lemma \ref{lambdaUstable} that if $r_P\le 3$ then $Q=\mathfrak Q(P)$ has parts that differ pairwise by at least two and are obtained from $P$ by the Oblak recursive process.
\begin{theorem}\label{fillthm} Let $Q=(u+s,u,u-r)$ with  $ 2\le s\le 4$ and $2\le r$. There is an array $\mathcal B(Q)$ of  dimensions  $(s-1)\times (r-1)\times (u-r)$ of partitions
\begin{equation}\label{box3thm}
 \mathcal B(Q)=\{P_{j,k,\ell} \mid 1\le j\le s-1,1\le k\le r-1,1\le \ell\le u-r\} 
 \end{equation}
 such that $\mathfrak Q(P_{j,k,\ell})=Q$ and $P_{j,k,\ell}$ has $j+k+\ell$ parts.
\end{theorem}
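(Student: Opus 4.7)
The strategy is to parallel the proof of Theorem \ref{tablethm} by constructing each candidate $P_{j,k,\ell}$ explicitly as $P_{j,k,\ell}=(m,\widetilde P)$ for a suitable top part $m$ and an auxiliary partition $\widetilde P$ drawn from a 2-part table. Since $P_{j,k,\ell}$ will be built so that $r_{P_{j,k,\ell}}\le 3$, Theorem \ref{Oblak3thm} allows us to compute $\mathfrak Q(P_{j,k,\ell})$ by the Oblak recursive process of Definition \ref{Oblakrecursivedef}; the guiding principle is that one step of this process, namely removing a longest U-chain (of length $u+s$), should return a partition of $\mathfrak Q$-value $(u,u-r)$ that lies in the 2-part table $\mathcal T((u,u-r))$ of Theorem \ref{Zthm}.

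For the first floor I would set $P_{1,k,\ell}=(u+s,P'_{k,\ell})$ with $P'_{k,\ell}$ the entry of $\mathcal T((u,u-r))$ from Theorem \ref{tablethm}. Since $s\ge 2$ guarantees $u+s-1\notin S_{P_{1,k,\ell}}$, the chain $C_{u+s}$ is just the top row and has length $u+s$; for any part $a$ of $P'_{k,\ell}$ one computes from equation \eqref{Caeq} that $|C_a|_{P_{1,k,\ell}}=|C_a|_{P'_{k,\ell}}+2\le u+2\le u+s$ (using Theorem \ref{indexthm}), so $C_{u+s}$ is longest. Removing it leaves $P'_{k,\ell}$, and the recursive step yields $\mathfrak Q(P_{1,k,\ell})=(u+s,u,u-r)=Q$. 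The part count $1+(k+\ell)=j+k+\ell$ is immediate from Theorem \ref{Zthm}.

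For $j\ge 2$ I would similarly take $P_{j,k,\ell}=(m,\widetilde P)$ with $\widetilde P$ drawn from an enlarged 2-part table. When $s=3$ the choice $m=u+2$ and $\widetilde P=\widetilde P_{k+1,\ell}\in\mathcal T((u+1,u-r))$ works for all $(k,\ell)$: the row index $k+1\ge 2$ forces (by Corollary \ref{cornercor} and inspection of Theorem \ref{tablethm}) the top part of $\widetilde P$ to be at most $u$, so the gap to $u+2$ is at least $2$, and a longest U-chain $C_a$ of length $u+3=u+s$ is produced by some $a\in S_{\widetilde P}$; removing it, via the poset description in Section \ref{posetsec}, returns an element of $\mathcal T((u,u-r))$. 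For $s=4$ the naive recipe $m=u+s-j+1$ fails: Example \ref{952eq} shows that within the third floor of $\mathcal B((9,5,2))$ partitions with top parts $6$ and $7$ coexist, and direct enumeration indicates that $\mathfrak Q^{-1}((9,5,3))$ contains no partition with top part $8$. The correct construction then requires splitting each floor further according to the A/B/C decomposition of Theorem \ref{tablethm} applied to the relevant auxiliary table, with the formulas for $m=m(j,k,\ell)$ and the indexing of $\widetilde P$ dictated by the invariants $q_t,d_t,k_t,c_t$ of Definition \ref{tablenotationdef}.

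The main obstacle is this case analysis for $s=4$. Once the constructions are in place, the three required verifications proceed routinely: the part count $j+k+\ell$ follows from $|P_{j,k,\ell}|=1+|\widetilde P|$ and Theorem \ref{Zthm}; the identity $\mathfrak Q(P_{j,k,\ell})=Q$ follows from Theorem \ref{Oblak3thm} by computing U-chain lengths via \eqref{Caeq} and applying Theorem \ref{Zthm} to the residual; and distinctness of the $P_{j,k,\ell}$ across the box is established either by the distinctness of $\widetilde P$ within a layer of fixed $m$, or, across layers, by the iterated U-chain length sequence, which is independent of choices by Theorem \ref{indthm}. I do not expect any fundamentally new techniques beyond those of Theorem \ref{tablethm} and the Oblak recursive framework, but the bookkeeping for $s=4$ will be substantial.
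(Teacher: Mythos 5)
Your strategy for the first floor and for $s=3$ matches the paper's: the paper proves Theorem~\ref{fillthm} by splitting into Lemma~\ref{s=2lem} ($s=2$), Lemma~\ref{s=3lem} ($s=3$), and Proposition~\ref{boxspecialprop} ($s=4$), and for $s=2,3$ it sets $P_{1,k,\ell}=(u+s,P_{k,\ell})$ with $P_{k,\ell}\in\mathcal T((u,u-r))$ and, for the second floor of $s=3$, $P_{2,k,\ell}=(u+2,P_{k+1,\ell})$ with $P_{k+1,\ell}\in\mathcal T((u+1,u-r))$. Your appeal to Lemma~\ref{bigpartBruhatlem}, Lemma~\ref{samelargestpartlem}, Theorem~\ref{Oblak3thm}, and Theorem~\ref{indexthm} (via~\eqref{Caeq}) to verify $\mathfrak Q(P_{j,k,\ell})=Q$ is exactly the paper's mechanism. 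So the argument as far as you carry it is sound and essentially the paper's argument.

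However, the $s=4$ case is not a matter of ``routine bookkeeping'' within your ansatz, and leaving it as a gap means the theorem is not proved. First, you never produce any construction for the third floor. Second, and more importantly, your ansatz $P_{j,k,\ell}=(m,\widetilde P)$ with a \emph{single} new top part $m$ cannot express the paper's third floor: in Proposition~\ref{boxspecialprop}, several of the partitions $\tilde P_{3,k,\ell}$ take the form $([u+4]^2,P_{k+1,\ell})$, i.e.\ the prefix is the \emph{two-part} almost rectangular partition $[u+4]^2$, not a single part $m$, and the maximal U-chain to be peeled off in the Oblak recursion is the U-chain $C_a$ with $a=\lceil (u+4)/2\rceil$ running through those two new rows (the proof exhibits $\tilde P_{3,1,1}=\left(\tfrac{u+1}{2}+2,\tfrac{u+1}{2}+1,\tfrac{u+1}{2},\tfrac{u-1}{2},u-r\right)$ when $u$ is odd). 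Your proposed ``formulas for $m(j,k,\ell)$ dictated by $q_t,d_t,k_t,c_t$'' will not recover this; the actual construction also requires distinguishing $2r>u+2$ from $2r\le u+2$, treating $k=r-1$ separately, and, in the $2r\le u+2$ branch, switching the auxiliary table between $\mathcal T((u,u-r))$, $\mathcal T((u+2,u-r))$, and $\mathcal T((u+2,u-2))$ depending on $k$ and $\ell$. This case analysis is the heart of the proof for $s=4$, and is missing.

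One further small imprecision: for the $s=4$ second floor the paper takes $P_{2,k,\ell}=(u+2,P_{k+1,\ell})$ with $P_{k+1,\ell}\in\mathcal T((u+2,u-r))$ (not $\mathcal T((u+1,u-r))$, which is the $s=3$ choice), so even the ``easy'' $j=2$ layer for $s=4$ differs from the pattern you describe.
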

This theorem covers all keys ${\sf S}_Q=(s-1,r-1,u-r)$ with $1\le s-1\le 3$.  Like Theorem~\ref{tablethm} we prove it using Oblak's recursive process. We note that Theorem \ref{fillthm} confirms part (a) of Conjecture \ref{Zgenconj} for the partitions $Q$ under consideration,  but does not show part (b).
We split Theorem \ref{fillthm} into three cases, $s=2$ in Lemma \ref{s=2lem}, $s=3$ in Lemma \ref{s=3lem}, and $s=4$ in Proposition \ref{boxspecialprop}. 
\begin{lemma}\label{s=2lem}
Let $Q=(u+2, u, u-r)$ with $u>r\geq 2$. For $k\in \{1,\ldots ,r-1\} $ and $\ell\in \{1,\ldots ,u-r\}$ put
\begin{equation*}
P_{1,k,\ell}=(u+2,P_{k,\ell}),
\end{equation*}
where $P_{k,\ell}$ is the partition in $\mathcal T((u,u-r))$ defined in Theorem \ref{tablethm}. Then $\mathfrak Q(P_{1,k,\ell})=Q$ and $P_{1,k,\ell}$ has $1+k+\ell$ parts for all $k\in \{1,\ldots ,r-1\} $ and $\ell\in \{1,\ldots ,u-r\}$. 
\end{lemma}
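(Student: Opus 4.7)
The plan is to deduce $\mathfrak{Q}(P_{1,k,\ell}) = Q$ from Oblak's Recursive Conjecture, which is known to hold (Theorem~\ref{Oblak3thm}) for any partition $P$ with $r_P \le 3$. Since $P_{k,\ell}$ is a union of exactly two almost rectangular subpartitions (as $\mathfrak Q(P_{k,\ell})$ has two parts), attaching the single new part $u+2$ on top should contribute a third almost rectangular piece, so that $r_{P_{1,k,\ell}} = 3$ and the recursive machinery applies.

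First I would verify that $(u+2)$ is isolated from $P_{k,\ell}$, that is, the largest part of $P_{k,\ell}$ is at most $u$. This can be checked case-by-case using the explicit formulas of Theorem~\ref{tablethm}: in Type A the largest part is at most $\lceil u/(k-t) \rceil \le u$, while in Types B and C the largest part of the top almost rectangular piece is $q_t + 2 \le (u-r) + 2 \le u$ (using $r \ge 2$). More conceptually, any U-chain $C_a$ in $\mathcal D_{P_{k,\ell}}$ has length at most $u$ by Theorem~\ref{indexthm}, and the U-chain through the top almost rectangular subpartition has length at least the largest part. Either argument yields $r_{P_{1,k,\ell}} = 1 + r_{P_{k,\ell}} = 3$.

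Next I would compare U-chain lengths in $\mathcal D_{P_{1,k,\ell}}$ with those in $\mathcal D_{P_{k,\ell}}$ via formula~\eqref{Caeq}. Inserting the new row of length $u+2$ above $P_{k,\ell}$ increases $\sum_{i>a} n_i$ by $1$ for every part $a$ of $P_{k,\ell}$, so $|C_a|_{P_{1,k,\ell}} = |C_a|_{P_{k,\ell}} + 2$, while $|C_{u+2}|_{P_{1,k,\ell}} = u+2$. Since the longest U-chain in $\mathcal D_{P_{k,\ell}}$ has length $u$ by Theorem~\ref{indexthm} applied to $\mathfrak Q(P_{k,\ell}) = (u,u-r)$, the longest U-chain in $\mathcal D_{P_{1,k,\ell}}$ has length $u+2$, and $C_{u+2}$ realizes this maximum.

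Finally I would invoke Theorems~\ref{Oblak3thm} and~\ref{indthm}: by independence of the choice of maximum U-chain in the recursive process, I may take $C_{u+2}$ as the first chain to remove. This deletes exactly the $u+2$ vertices in the top row, leaving $\mathcal D_{P_{k,\ell}}$, so $P'(P_{1,k,\ell},u+2) = P_{k,\ell}$. Combined with $\mathfrak Q(P_{k,\ell}) = (u,u-r)$ from Theorem~\ref{tablethm}, the recursion then gives $\mathfrak Q(P_{1,k,\ell}) = (u+2,u,u-r) = Q$. The number of parts is immediate: $P_{1,k,\ell}$ has $1 + (k+\ell)$ parts since $P_{k,\ell}$ has $k+\ell$. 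The only genuine obstacle is the bound on the largest part of $P_{k,\ell}$, which is the sole step that draws substantively on the structure of $\mathcal T((u,u-r))$; everything else is formal bookkeeping with the U-chain length formula and the known validity of Oblak's recursion in the three-part case.
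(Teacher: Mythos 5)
Your proposal is correct and follows essentially the same approach as the paper: bound the largest part of $P_{k,\ell}$ by $u$ so that the added part $u+2$ is isolated, then apply Oblak's recursion for $r_P\le 3$ (Theorem~\ref{Oblak3thm}) by peeling off $C_{u+2}$ first. The paper's version is far terser --- it simply asserts the bound on the largest part and states that the conclusion "follows" --- whereas you make explicit the U-chain bookkeeping and the use of Theorem~\ref{indthm} to justify choosing $C_{u+2}$ as the first chain removed, but the underlying argument is the same.
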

\begin{proof}
For all $P_{k, \ell} \in \mathcal{T}((u,u-r))$, the largest part of $P_{k, \ell}$ is at most $u$ so it differs from $u+2$ by at least 2. Thus, the only almost rectangular subpartition of $P_{1,k,\ell}$ that includes $u+2$ is $(u+2)$ itself. On the other hand, since $P_{k,\ell}\in \mathcal{T}((u,u-r))$, we have $\mathfrak{Q}((P_{k, \ell}))=(u,u-r)$. Thus, the longest U-chain in the poset of $P_{1,k,\ell}$ has length $u+2$, and can be obtained from the top part $(u+2)$ or the union of the longest U-chain in the poset of $P_{k,\ell}$ and the first and last vertices in the row representing $u+2$ in the poset of $P_{1,k,\ell}$. Choosing the top part $(u+2)$ as the longest $u$-chain, it follows from Theorem \ref{Oblak3thm} that $\mathfrak{Q}((u+2, P_{k, \ell}))=Q$.
 The claim about the number of parts of $P_{1,k,\ell}$ is immediate from Theorem \ref{tablethm}(d).
\end{proof}
\begin{lemma}\label{samelargestpartlem} Let $Q'=(u,u-r)$ with $u>r\ge 2$. The largest part of $P_{2,1}\in \mathcal T(Q')$ is greater than or equal to the largest part of each partition $P_{k,\ell}\in \mathcal T(Q')$ with $k\ge 2$.
\end{lemma}
\begin{proof}  Let $P_{k,\ell}\in \mathcal T(Q')$. If $P_{k,\ell}$ is of type A then by Theorem \ref{tablethm} its largest part is the largest part of $[u]^k$, which is $\lceil \frac{u}{k}\rceil$. If $P_{k,\ell}$ is of type B or type C, then its largest part is the largest part of $[u-r+2t]^t$, which is  $\lceil \frac{u-r}{t}\rceil+2$, for some $t\in \{1,2,\ldots,t_\max\}$. Let  $\delta$ be the largest part of $P_{2,1}$.
It is enough to show that
\begin{align}\label{deltaAeqn}
\delta &\ge \lceil \frac{u}{k}\rceil \text { for all $k\ge 2$; and}\\
\delta&\ge \lceil \frac{u-r}{t}\rceil+2 \text { for all } t\in \{1,2,\ldots, t_\max\}\label{deltaBeqn}.
\end{align}
By Lemma \ref{tableinvariantslem}(b) we know that $k_1=\lceil\frac{u}{u-r+1}\rceil\ge 2$.  We consider two cases.
\begin{enumerate}[a.]
\item If $k_1=2$ then $P_{2,1}$ is of type B or C and its largest part is $\delta=u-r+2$. Inequality \eqref{deltaBeqn} is clear. Since $k_1=2$ we know that $\frac{u}{u-r+1}\le 2$, which implies that 
$u-r+2\ge \frac{u}{2}+1$. This shows that $\delta>\lceil \frac{u}{2}\rceil$ and inequality \eqref{deltaAeqn} follows. 
\item If $k_1>2$ than $P_{2,1}$ is of type A and $\delta=\lceil \frac{u}{2}\rceil $. Inequality \eqref{deltaAeqn} follows immediately.
Since $k_1>2$, we have $\frac{u}{u-r+1}>2$ and therefore
$\frac{u}{2}>u-r+1$. This implies that $\delta\ge u-r+2$, and inequality \eqref{deltaBeqn} follows.
\end{enumerate}\end{proof}
\begin{lemma}\label{s=3lem}
Let $Q=(u+3, u, u-r)$ with $u>r\geq 2$. For $k\in \{1,\ldots ,r-1\} $ and $\ell\in \{1,\ldots ,u-r\}$ put
\begin{equation*}
P_{1,k,\ell}=(u+3, P_{k,\ell}),
\end{equation*}
where $P_{k,\ell}$ is the partition in $\mathcal{T}((u,u-r))$ defined in Theorem \ref{tablethm}, and put
\begin{equation*}
P_{2,k,\ell}=(u+2, P_{k+1,\ell}),
\end{equation*}
where $P_{k+1,\ell}$ is the partition in $\mathcal{T}((u+1,u-r))$ defined in Theorem \ref{tablethm}.
Then $\mathfrak Q(P_{j,k,\ell})=Q$ and $P_{j,k,\ell}$ has $j+k+\ell$ parts for all $j\in\{1,2\},k\in \{1,\ldots ,r-1\} $ and $\ell\in \{1,\ldots ,u-r\}$
\end{lemma}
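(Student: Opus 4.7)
The plan is to apply Oblak's Recursive Process (Definition \ref{Oblakrecursivedef}) to each $P_{j,k,\ell}$; this computes $\mathfrak Q(P_{j,k,\ell})$ correctly because $r_{P_{j,k,\ell}}\le 3$ and Oblak's Recursive Conjecture holds in this range by Theorem \ref{Oblak3thm}. The number-of-parts claim is immediate from Theorem \ref{tablethm}, since $P_{k,\ell}\in\mathcal T((u,u-r))$ has $k+\ell$ parts and $P_{k+1,\ell}\in\mathcal T((u+1,u-r))$ has $k+\ell+1$ parts.

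For $j=1$ the argument is nearly identical to that of Lemma \ref{s=2lem}. Inspection of the type A/B/C formulas of Theorem \ref{tablethm} shows that the largest part of any $P_{k,\ell}\in\mathcal T((u,u-r))$ is at most $u$, so $\{u+3\}$ is an isolated almost rectangular block of $P_{1,k,\ell}$, giving $r_{P_{1,k,\ell}}=3$. Every U-chain $C_a$ inherited from $\mathcal D_{P_{k,\ell}}$ gains exactly $2$ in length when the new top row is prepended (so has length $\le u+2$), whereas $|C_{u+3}|=u+3$; thus $C_{u+3}$ is the unique longest U-chain. Removing it leaves $P_{k,\ell}$, so the recursion yields $\mathfrak Q(P_{1,k,\ell})=(u+3,\mathfrak Q(P_{k,\ell}))=(u+3,u,u-r)=Q$.

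For $j=2$ the spacing between $u+2$ and the top of $P_{k+1,\ell}$ drops to $2$, so two extra checks are needed. First I verify that the largest part of $P_{k+1,\ell}\in\mathcal T((u+1,u-r))$ is at most $u$ whenever $k+1\ge 2$: for type A, Lemma \ref{tableinvariantslem}(b)--(c) forces $k+1-t\ge 2$, so the top block has parts $\le\lceil(u+1)/2\rceil\le u$; for types B and C the top block is $[u-r+2(t+1)]^{t+1}$ with largest part $\le u-r+2\le u$ since $r\ge 2$. Consequently $\{u+2\}$ is still isolated, $r_{P_{2,k,\ell}}=3$, and each inherited U-chain gains exactly $2$. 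Because $\mathfrak Q(P_{k+1,\ell})=(u+1,u-r)$, the longest U-chain of $\mathcal D_{P_{k+1,\ell}}$ has length $u+1$, so its extension has length $u+3$, strictly exceeding $|C_{u+2}|=u+2$, and it is therefore the unique longest U-chain of $\mathcal D_{P_{2,k,\ell}}$.

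The main (and only genuinely case-dependent) step will be to identify the residue left by removing this chain and to show that it has the form $(u,P')$ with $P'$ almost rectangular of size $u-r$. The $u+2$ row loses its two extremal vertices, leaving a row of length $u$, while the $P_{k+1,\ell}$ part loses the U-chain realizing its first $\mathfrak Q$-part. For a type A entry ($C_a$ through the top block $[u+1]^{k+1-t}$) the $P_{k+1,\ell}$-residue is the bottom block $[u-r]^{t+\ell}$; for type B ($C_b$ through the bottom block) it is the top block after each of its $t+1$ rows loses two vertices, namely $[u-r]^{t+1}$; for type C ($C_{a-1}$ through rows of length $a-1$ and $a-2$) it is $((a-2)^{n_a},(a-3)^{n_{a-3}})$, which is almost rectangular of size ${\sf c}_1(a-2)+{\sf c}_2(a-3)=u-r$ by Lemma \ref{Clem}. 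In every case $\{u\}$ is isolated in $(u,P')$ because $u-(u-r)=r\ge 2$, so one more recursion step yields $\mathfrak Q((u,P'))=(u,u-r)$. Combining, $\mathfrak Q(P_{2,k,\ell})=(u+3,u,u-r)=Q$, completing the proof.
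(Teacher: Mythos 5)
Your proof is correct and follows essentially the same route as the paper's: reduce to the Oblak recursion via Theorem~\ref{Oblak3thm}, show that the largest part of $P_{k+1,\ell}\in\mathcal T((u+1,u-r))$ is at most $u$ so that the prepended top U-chain has maximal length $u+3$, and verify that its removal leaves $(u,P')$ with $P'$ almost rectangular of size $u-r$. The only divergences are in presentation: the paper gets the largest-part bound by computing $P_{2,1}\in\mathcal T((u+1,u-r))$ explicitly (type A if $2r>u+1$, type B otherwise) and then invoking Lemma~\ref{bigpartBruhatlem} together with Lemma~\ref{samelargestpartlem}, whereas you argue directly by type from the formulas in Theorem~\ref{tablethm}; and you spell out the residue $P'$ case-by-case where the paper records it in one sentence. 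One small overstatement: the extended chain need not be \emph{the unique} longest U-chain of $\mathcal D_{P_{2,k,\ell}}$ (several maximal chains of length $u+1$ may coexist in $\mathcal D_{P_{k+1,\ell}}$, e.g.\ when $P_{k+1,\ell}=(a,a-1,a-2,a-3)$), but this is harmless, since Theorem~\ref{indthm} makes the Oblak recursion independent of the choice.
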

\begin{proof}
Since the largest part of every partition in $ \mathcal{T}((u,u-r))$ is at most $u$, it follows from Theorem \ref{Oblak3thm} that 
for all $P_{k,\ell}\in \mathcal{T}((u,u-r))$, we get $\mathfrak{Q}((u+3, P_{k,\ell}))=Q$, as desired.
\\
Now we consider partitions of the form $P=(u+2, P_{k+1,\ell})$ with $P_{k+1,\ell}\in \mathcal T((u+1,u-r))$. We now show that the largest part of $P_{k+1,\ell}\in \mathcal T((u+1,u-r))$ is at most $u$.
First we consider $P_{2,1}\in \mathcal{T}((u+1,u-r))$. Since in  $\mathcal{T}((u+1,u-r))=\mathcal{T}((u+1,u+1-(r+1)))$, $k_1=\lceil\frac{u+1}{(u+1)-(r+1)+1}\rceil=\lceil\frac{u+1}{u-r+1}\rceil$, we have $k_1>2$ if and only if $2r>u+1$. Consequently,
$$
P_{2,1}=\left \{
\begin{array}{lll}
([u+1]^2, u-r)&&\mbox{ (Type A) if }2r>u+1; \\ \\
(u-r+2, [u-2]^2)&&\mbox{ (Type B) if }2r\leq u+1.
\end{array}
\right.
$$
If $2r>u+1$ then $\frac{u+1}{2}<r$ and therefore $\lceil \frac{u+1}{2}\rceil\leq r<u$ so the largest part of $P_{2,1}$ is less than $u$. By Lemma \ref{samelargestpartlem} it follows that all $P_{k+1,\ell}\in \mathcal{T}((u+1,u-r))$ with $k\geq 1$, have  largest part smaller than $u$. 
If $2r \leq u+1$, then the largest part of $P_{2,1}$ is at most $u-r+2\leq u$. It follows again by Lemma \ref{samelargestpartlem}  that the largest part of all $P_{k+1,\ell}\in \mathcal T((u+1,u-r))$ is at most  $u$.\par

Thus, in either case, the longest U-chain in the poset of $P_{2,k,\ell}$ has length $u+1+2=u+3$ and it is the union of the longest U-chain in the poset of $P_{k+1, \ell}$ and the first and last vertices in the $u+2$ row of the poset of  $P_{2,k,\ell}$. Once this U-chain is removed from the poset $P_{2,k,\ell}$  the remaining U-chains have lengths $u$ (left over on top) and $u-r+2$ (the remaining vertices in the poset of $P_{k+1,\ell}$ union the first and last remaining vertices on the top row). Thus, by the Oblak recursive process, $\mathfrak{Q}((u+2, P_{k,\ell}))=(u+3,u, u-r)$, as desired. The assertion about the number of parts of $P_{j,k,\ell}$ is immediate from Theorem \ref{tablethm}(d).
\end{proof}\par
We divide $\mathcal B(Q), Q=(u+4,u,u-r)$ into
3 levels, each comprising a $(r-1)\times (u-r)$ table of partitions, and labelled by the first entry $i$ of $P_{i,k,\ell}$. We now specify the entries of each level.
\begin{proposition}\label{boxspecialprop} Let $Q=(u+4, u, u-r)$ with $u>r\geq 2$. For $k\in \{1,\ldots ,r-1\} $ and $\ell\in \{1,\ldots ,u-r\}$ put
\begin{equation*}
P_{1,k,\ell}: =(u+4, P_{k,\ell})
\end{equation*}
where $P_{k,\ell}$ is the partition in $\mathcal T((u,u-r))$ defined in Theorem \ref{tablethm}; put
\begin{equation*}
P_{2,1,\ell}: =(u+2,P_{2,\ell})
\end{equation*}
where $P_{2,\ell}$ is the partition in $\mathcal T((u+2,u-r))$ defined in Theorem \ref{tablethm}; put
\begin{equation*}
P_{3,k,\ell}: =(u+2, P_{k+2,\ell})
\end{equation*}
where $P_{k+2,\ell}$ is the partition in $\mathcal T((u+2,u-r))$ defined in Theorem \ref{tablethm}.
 For $(k,\ell)$ satisfying $2\le k\le r-1$ and $1\le \ell\le u-r$ put
 \begin{equation*}
 P_{2,k,\ell}: =\begin{cases}  
([u+4]^2, P_{k, \ell})\quad  \text{ if } 2r> u+2,\text {where } P_{k,\ell}\in \mathcal{T}((u,u-r));&\\
([u+4]^2, P_{k,\ell})\quad \text { if $2r\le u+2$ and $2\le k\le r-2$ and $2\le \ell\le u-r$},&\\
\qquad\qquad\qquad\qquad\qquad\text { where } P_{k,\ell}\in \mathcal{T}((u,u-r));&\\
(u-r+4, P_{2, k})\quad \text{ if }2r\leq u+2 \text { and $2\le k\le r-1$ and $\ell =1$ where }P_{2,k}\in \mathcal{T}((u+2,u-2));&\\
(u-r+4,P_{2, r-2+\ell}) \,\, \text { if  $ 2r\le u+2$ and $k=r-1$ and $2\le \ell\le u-r$},&\\
\qquad\qquad\qquad\qquad\qquad\text { where } P_{2,r-2+\ell}\in \mathcal T((u+2,u-2)).&
  \end{cases}
 \end{equation*}
Then $\mathfrak Q(P_{j,k,\ell})=Q$ and $P_{j,k,\ell}$ has $j+k+\ell$ parts for all $j\in\{1,2,3\},\,
k\in\{1,\ldots,r-1\}$ and $\ell\in\{1,\ldots,u-r\}$.
\end{proposition}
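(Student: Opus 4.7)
The plan is to verify, for each triple $(j,k,\ell)$ in the indexing box, that $\mathfrak{Q}(P_{j,k,\ell}) = Q = (u+4, u, u-r)$ by applying P.~Oblak's recursive process.  This is valid because every $P_{j,k,\ell}$ displayed in the statement is a union of at most three almost rectangular partitions, so $r_{P_{j,k,\ell}} \leq 3$, and Theorem~\ref{Oblak3thm} guarantees the conjecture applies.  The claim that $P_{j,k,\ell}$ has $j+k+\ell$ parts is immediate from the construction together with Theorem~\ref{tablethm}(d).

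The first floor follows the pattern of Lemma~\ref{s=2lem}: since each $P_{k,\ell} \in \mathcal{T}((u,u-r))$ has largest part at most $u$, the prepended row of length $u+4$ in $P_{1,k,\ell}$ is isolated from the rest (gap at least $4$), producing a unique longest U-chain of length $u+4$; removing it recovers $\mathcal{D}_{P_{k,\ell}}$, whose Oblak recursion gives $(u, u-r)$.  The second floor extends Lemma~\ref{s=3lem}: Lemmas~\ref{bigpartBruhatlem} and~\ref{samelargestpartlem} reduce the question of bounding the largest part of $P_{k+1,\ell} \in \mathcal{T}((u+2, u-r))$ (for $k \geq 1$) to an inspection of $P_{2,1}$, splitting into the subcases $2r > u+2$ and $2r \leq u+2$ exactly as in the proof of Lemma~\ref{s=3lem}.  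Once the largest part of $P_{k+1,\ell}$ is shown to be at most $u$, the longest U-chain in $\mathcal{D}_{P_{2,k,\ell}}$ is the concatenation of the longest U-chain of $\mathcal{D}_{P_{k+1,\ell}}$ (of length $u+2$) with the two endpoints of the prepended top row, for a total length $u+4$, and the subsequent recursion produces the tail $(u, u-r)$.

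The third floor is the main obstacle and requires a case analysis mirroring the piecewise definition.  When $k = r-1$, the partition $(u+2, P_{r+1,\ell})$ reduces to a second-floor-style argument, exploiting that by Lemma~\ref{bigpartBruhatlem} the last row of $\mathcal{T}((u+2, u-r))$ has the smallest largest-part among the first-column entries.  For $k < r-1$ the construction splits into two geometric shapes.  In the shape $([u+4]^2, P_{k+1,\ell})$ with $P_{k+1,\ell} \in \mathcal{T}((u,u-r))$, the almost rectangular top block $[u+4]^2$ contributes, via Equation~\eqref{Caeq}, a U-chain of length $u+4$ through its two rows, matching the length of the U-chain through a lower row (since $\mathfrak{Q}(P_{k+1,\ell})_1 = u$ and the top block contributes $2 \cdot 2 = 4$ extra endpoints); by Theorem~\ref{indthm} the Oblak recursion is independent of which longest chain is removed, and the remainder recurses to $(u, u-r)$.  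In the shape $(u-r+4, P_{3,m})$ with $P_{3,m} \in \mathcal{T}((u+2, u-2))$, one verifies by direct inspection of the top rows of $\mathcal{T}((u+2, u-2))$ that the chosen $P_{3,m}$ have the structure enabling a U-chain of length $u+4$ to form in $(u-r+4, P_{3,m})$; the hypothesis $2r \leq u+2$ is precisely what guarantees that $u-r+4$ fits as the new top part and that the resulting poset recurses to $(u,u-r)$ after removing this longest chain.

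The primary difficulty is the third-floor bookkeeping: the four geometric shapes are required because a three-part stable partition $(u+4, u, u-r)$ can arise as an Oblak-image in distinct ways depending on which almost rectangular piece of $P$ supplies the longest U-chain at each step of the recursion, and this choice varies with the relative sizes of $u, r,$ and $u-r$.  A more conceptual proof would emerge from a direct classification of partitions $P$ with $r_P = 3$ according to their Oblak-maximum U-chain structure, or from exploiting the ${\sf S}_Q = (3, r-1, u-r)$ key of the Box Conjecture~\ref{Zgenconj} to produce a uniform parametrization; but the elementary Oblak-recursion approach sketched above suffices to verify the statement case by case.
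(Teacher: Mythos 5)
Your high-level strategy is the same as the paper's: first two floors follow Lemma~\ref{s=2lem} and Lemma~\ref{s=3lem}, third floor requires a case analysis governed by comparing the largest part of the relevant $P_{k+1,\ell}$ (or $P_{3,m}$) with the prepended almost-rectangular block, using Lemmas~\ref{bigpartBruhatlem} and~\ref{samelargestpartlem} to reduce to inspecting the first-column entry. Your invocation of Khatami's Theorem~\ref{indthm} to freely choose which maximal U-chain to remove is a legitimate simplification that the paper's proof does not explicitly lean on, and the first two floors of your argument are sound.

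The problem is the third floor, where the gaps are not cosmetic. For the shape $([u+4]^2, P_{k+1,\ell})$ you write that removing the $u+4$--length chain through the two rows of $[u+4]^2$ ``recurses to $(u,u-r)$,'' but this needs the smallest part of $[u+4]^2$, namely $\lfloor (u+4)/2\rfloor$, to exceed the largest part of $P_{k+1,\ell}$ --- otherwise the chain $C_a$ with $a$ the largest part does not consist solely of the two rows of $[u+4]^2$, and the remainder after deletion is not $P_{k+1,\ell}$ at all. Verifying that bound is exactly what the paper's Case~1 does: when $2r>u+2$ one must show $P_{2,1}\in\mathcal T((u,u-r))$ is of type A with largest part $\lceil u/2\rceil$, and then observe that the difference $\lfloor (u+4)/2\rfloor - \lceil u/2\rceil$ equals $2$ if $u$ is even but only $1$ if $u$ is odd; the odd case needs a further explicit check (which the paper notes) that the chain $C_a$ through $[u+4]^2$ is still disjoint from $P_{k+1,\ell}$, has length $u+4$, and is actually a maximal chain. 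None of this is in your proposal --- you also never argue that $u+4$ bounds \emph{all} chain lengths in the bigger poset (you only exhibit two chains attaining $u+4$). Similarly, for the $(u-r+4,P_{3,m})$ shape with $P_{3,m}\in\mathcal T((u+2,u-2))$ you defer to ``direct inspection'' without stating the needed inequality, namely that the largest part of $P_{3,1}\in\mathcal T((u+2,u-2))$ is at most $u-r+2$; this is precisely where the hypothesis $2r\le u+2$ enters and where the paper makes the calculation. As written, your proof asserts the Oblak recursion gives $Q$ without ever computing the result of a single removal step on the third floor, which is the actual content of the proposition.
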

\begin{proof} Since the largest part of every partition in $ \mathcal{T}((u,u-r))$ is at most $u$, it follows readily from Theorem \ref{Oblak3thm}  that $\mathfrak Q(P_{1,k,\ell})=Q$ for all $k,\ell$. 
\par
To prove that $\mathfrak Q(P_{2,1,\ell})=Q$ and $\mathfrak Q(P_{3,k,\ell})=Q$ for all $k$ and $\ell$ we begin by claiming that the largest part of $P_{2,1,\ell}$ and of $P_{3,k,\ell}$, namely $u+2$, differs from the second part by at least $2$.\par\noindent
Indeed, since by Lemma \ref{samelargestpartlem} the largest part of every $P_{i,j}\in \mathcal T((u+2,u-r))$ with $i\ge 2$ is at most equal to the largest part of $P_{2,1}\in \mathcal T((u+2,u-r))$, it is enough to compare the largest part of $P_{2,1}$ and $u+2.$  Theorem \ref{tablethm} tells us what the largest part of $P_{2,1}$ is, depending on whether $k_1>2$ (in which case $P_{2,1}$ is of type A), or $k_1=2$ (in which case $P_{2,1}$ is of type B). 
If $k_1=2$ then the largest part of $P_{2,1}$ is $u-r+2$, which is at most $u$ because $r$ is at least 2. On the other hand, if $P_{2,1}$ is of type A then its largest part is $\lceil \frac{u+2}{2} \rceil$. We have 
$$\Big\lceil \frac{u+2}{2} \Big\rceil =  \Big\lceil \frac{u}{2} \Big\rceil+1 \leq \frac{u+1}{2}+1,$$
 which is at most $u$, because $u$ is at least $3$.
In either case the largest part of $P_{2,1}$ is at most $u$. This proves the claim. \par
 It now follows, as in the proof of Lemma \ref{s=3lem}, that the Oblak recursive process implies that $\mathfrak Q(P_{2,1,\ell})=\mathfrak Q(P_{3,k,\ell})=Q$.
\vskip 0.2cm
What's left is to prove that $\mathfrak Q({P}_{2,k,\ell})=Q$ when $2\le k\le r-1$ and $1\le \ell\le u-r$.
If $r=2$, then there are no such partitions and there is nothing to prove. We assume that $r>2$. Consequently, we also have $u>3$.
Recall that $k_1$ for $Q=(u,u-r)$ is at least 3 if and only if $2r>u+2$, and otherwise $k_1=2$. 
\vskip 0.3cm
\underline{Case 1}. Assume that $2r> u+2$.  Then $k_1\ge 3$ for $Q'=(u,u-r)$. 
Therefore,  $P_{2,1}$ is of type A and so $P_{2,1}=([u]^2,u-r)$, and $P_{2,2,1}=([u+4]^2,[u]^2, u-r)$.

Note that the largest part of $([u]^2, u-r)$ is $\lceil \frac{u}{2}\rceil$, and the smallest part of $[u+4]^2$ is $\lfloor \frac{u+4}{2} \rfloor$. So if $u$ is even then the difference is 2, which implies that $\mathfrak{Q}(P_{2,2,1})=Q$, as desired. Now assume that $u$ is odd. Then
 $${P}_{2,2,\ell}=\left([u+4]^2,[u]^2, [u-r]^\ell\right)=\left(\frac{u+1}{2}+2, \frac{u+1}{2}+1, \frac{u+1}{2}, \frac{u-1}{2},[u-r]^\ell\right).$$ 
  One checks that $C_a$, with $a=\frac{u+1}{2}+2$ is one of the maximum-length U-chains
in the poset of ${P}_{2,2,1}$ (of length $u+4$) and then that the Oblak recursive process starting with $C_a$ gives $\mathfrak Q({P}_{2,2,1})=Q$.
  Since by  Lemma  \ref{samelargestpartlem} the largest part of each partition in $\{P_{i, j}\in \mathcal{T}((u,u-r)) \mid i\geq 2\}$ is less than or equal to the largest part of $P_{2,1}$, which is $\frac{u}{2}$ if $u$ is even and $\frac{u+1}{2}$ if $u$ is odd, it follows that $\mathfrak Q({P}_{2,k,\ell})=Q$ for $2\le k\le r-1$ and $1\le \ell\le u-r$, as desired.
\vskip 0.2cm
\underline{Case 2}. Assume that  $2r\leq u+2$, so $k_1=2$ for $Q'=(u,u-r)$ .\par
We first deal with the partitions 
$P_{2,k,\ell}=([u+4]^2,P_{k,\ell})$, with $P_{k,\ell}\in\mathcal T((u,u-r))$ for  $2\le k\le r-2$  and $2\le \ell\le u-r$.  Such partitions only exist if $r\ge 4$. If $r=4$ then $t_{\max}=1$ for $\mathcal T((u,u-r))$. On the other hand, if $r>4$ then the assumption $2r\le u+2$ yields $u-r>2$ and it follows by \eqref{kalternative} that
$k_2\ge 3$ for $\mathcal T((u,u-r))$. Either way, Theorem \ref{tablethm} implies that the partition $P_{2,2}\in \mathcal T(( u,u-r))$ is of type A and its largest part is $\lceil \frac{u}{2}\rceil$.  Since a partition in any B/C hook of $\mathcal T((u,u-r))$ for $t\ge 2$ begins with $[u-r]^t+2$ and since $r\ge 4$ its largest part is no greater than  $\lceil \frac{u}{2}\rceil$. Any partition $P_{k,\ell}\in \mathcal T((u,u-r))$ of type A with $k\ge 2$ has largest part no greater than $\lceil \frac{u}{2}\rceil$.  We now conclude, with the same argument as in Case 1, that $\mathfrak Q(P_{2,k,\ell})=Q$ for all such partitions.
\par
The remaining partitions are ${P}_{2,k,\ell}$ where $k\ge 2$ and $\ell=1$, or where $k=r-1$ and $2\le \ell \le u-r$.  Each of these partitions have the form $(u-r+4, P_{2,j})$ where $P_{2,j}\in \mathcal T((u+2,u-2))$ with $j\ge 2$. Note that since $(u+2)-(u-2)=4$, we have for $\mathcal T((u+2,u-2))$ that $t_{\max}=1$. Thus $P_{2,j}$ with $ j\ge 2$ has type A and satisfies $P_{2,j}=([u+2]^2,[u-2]^j)$.  Its largest part is $\lceil \frac{u+2}{2}\rceil$. From the assumption $2r\le u+2$ we obtain $2(u-r+2)\ge u+2$, whence $\frac{u+2}{2}\le u-r+2$, so $\lceil \frac{u+2}{2}\rceil\le u-r+2$. Thus the largest part of $P_{2,j}$ differs from $u-r+4$ by at least $2$. It now follows, as in the proof of 
Lemma \ref{s=3lem} that the Oblak recursive process gives $\mathfrak Q(P_{2,k,\ell})=Q$ in this case as well.\par

We have shown that $\mathfrak Q(P_{j,k,\ell})=Q$ for all $j\in\{1,2,3\}, k\in \{1,\ldots ,r-1\}, \ell\in\{1,\ldots ,u-r\}$. The assertion about the number of parts of $P_{j.k,\ell}$ follows readily by applying Theorem \ref{tablethm}(d).
\end{proof}
\begin{figure}[htb]
\begin{center}
\includegraphics[scale=0.63]{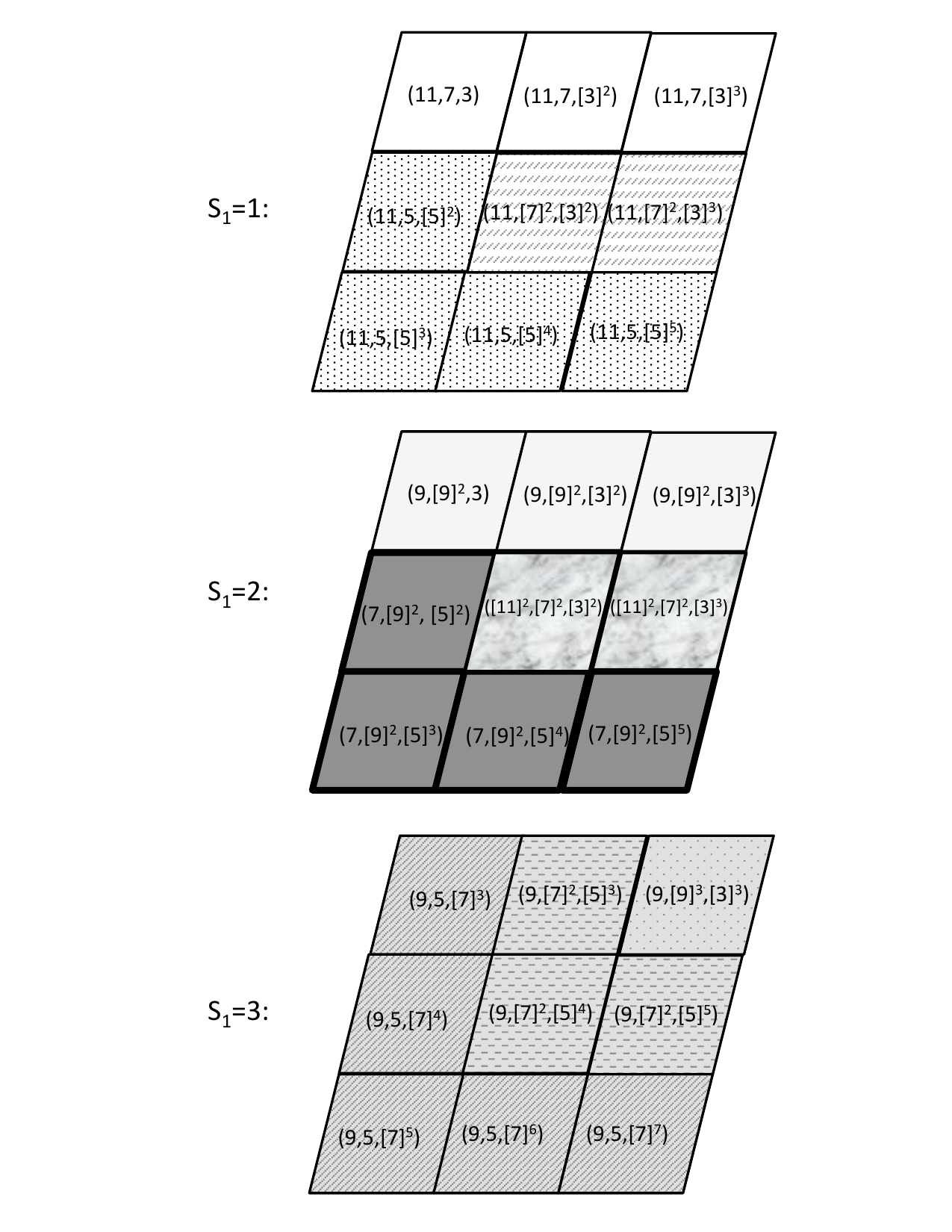}
\end{center}
\vskip -0.21cm
\caption{Box $\mathcal B(Q)$ for $Q=(11,7,3)$}\label{(11,7,3)fig}
\end{figure}

\begin{example}\label{11,7,3ex} In Figure \ref{(11,7,3)fig}, we specify the box $\mathcal B(Q)$ for $Q=(11,7,3)$. Since the key of $Q$ is ${\sf S}_Q=(3,3,3),\, \mathcal B(Q)$ is a $3\times 3\times 3$ cube.  This box is arranged in accordance with Proposition~\ref{boxspecialprop}, and $S_1\in \{1,2,3\}$ is the first index.
 \end{example}

\noindent
{\bf Funding.}
 The third-named author received support from The City University of New York PSC-CUNY Research Award Program and from the National Science Foundation through grant number DMS-1407394.
\begin{ack}
 We are grateful to Polona~Oblak and Toma\v{z} Ko\v{s}ir for their inspiring work on the $P\to \mathfrak Q(P)$ problem, and to Polona Oblak for a second fascinating conjecture in \cite{Obl2} that was the start of our work. The first author is grateful to Toma\v{z} Ko\v{s}ir and his colleagues for an invitation to visit U. Ljubljana in June of 2014, and for discussions he had there.\par
  We greatly appreciate comments of Don King, Alfred No\"{e}l and George McNinch, a discussion several of us had with Barry Mazur, and also discussions several of us had with Eric Friedlander and Julia Pevstova about potential connections with their work on Jordan types. \par
  We are especially grateful for the comments and contribution to the paper of two anonymous referees: one made especially extensive and detailed comments which we much appreciated, and we made many corrections and changes in response.  Another gave a more broad critique which we also appreciated, and as a result we hope we have made the main ideas more accessible, earlier in the paper.
\end{ack}

\vskip 0.4cm
{\small\sc Department of Mathematics, Northeastern University, Boston MA 02115,
 USA}\\{\small{\it E-mail address:}  
a.iarrobino@neu.edu
\vskip 0.2cm
{\sc Department of Mathematics, Union College, Schenectady, NY 12308, USA}\\
{\small{\it E-mail address:} khatamil@union.edu}\vskip 0.2cm 

{\small\sc Department of Mathematics, Medgar Evers College, City University of New York, Brooklyn, NY 11225, USA }\\
{\it E-mail address:} bartvs@mec.cuny.edu
\vskip 0.2cm

{\small\sc Mathematics Department, University of Missouri, Columbia, MO, 65211, USA }\\
{\it E-mail address:} zhaorui0408@gmail.com

\end{document}